\def\comment#1{}
\newcommand{\R}{\mathbb{R}}
\newcommand{\N}{\mathbb{N}}
\newcommand{\F}{\mathbb{F}}
\newcommand{\U}{\mathbb{U}}
\newcommand{\cG}{\mathcal{G}}
\newcommand{\T}{\mathbb{T}}
\newcommand{\cE}{\mathcal{E}}
\newcommand{\cC}{\mathcal{C}}
\newcommand{\cM}{\mathcal{M}}
\newcommand{\cL}{\mathcal{L}}
\newcommand{\cT}{\mathcal{T}}
\newcommand{\x}{\mathbf{x}}
\newcommand{\fhi}{\varphi}
\def\to{\mathop{\rightarrow}}
\def\dans{\mathop{\subset}}
\newcommand{\moins}{\setminus}
\newcommand{\Homeo}{\mathrm{Homeo}}
\newcommand{\dist}{\mathrm{dist}}
\newcommand{\Int}{\mathrm{Int}}
\newcommand{\Ker}{\mathrm{Ker}}
\newcommand{\Id}{\mathrm{Id}}
\newcommand{\cN}{\mathcal{N}}
\newcommand{\vide}{\emptyset}
\def\dans{\mathop{\subseteq}}
\newtheorem{theorem}{Theorem}[section]
\newtheorem{maintheorem}{Theorem}
\newtheorem{lemma}[theorem]{Lemma}
\newtheorem{proposition}[theorem]{Proposition}
\newtheorem{corollary}[theorem]{Corollary}
\theoremstyle{definition}
\newtheorem{definition}[theorem]{Definition}
\theoremstyle{remark}
\newtheorem{remark}[theorem]{Remark}
\title[Topology of leaves for minimal laminations by hyperbolic surfaces II]{Topology of leaves for minimal laminations by non-simply connected hyperbolic surfaces}
\date{}
\author[S. Alvarez]{S{\'e}bastien Alvarez}\thanks{Corresponding author: Sébastien Alvarez}
\address{CMAT, Facultad de Ciencias, Universidad de la Rep\'ublica, Uruguay}
\email{salvarez@cmat.edu.uy}
\author[J. Brum]{Joaqu\'in Brum}
\address{IMERL, Facultad de Ingenier\'ia, Universidad de la Rep\'ublica, Uruguay}
\email{jbrum@fing.edu.uy}
\thanks{The authors were partially supported by CSIC 618, CSIC I+D 389, FCE-135352, FCE-148740 and MathAmSud RGSD 19-MATH-04 as well as by Distinguished Professor Fellowships of FSMP. S.A. acknowledges the support of LIA-IFUM. J.B. acknowledges the support of CONICYT via FONDECYT Postdoctorate 3190719.}
\begin{document}

\maketitle

\begin{abstract}
We give the topological obstructions to be a leaf in a minimal lamination by hyperbolic surfaces whose generic leaf is homeomorphic to a Cantor tree. Then, we show that all allowed topological types can be simultaneously embedded in the same lamination. This result, together with results in \cite{ABMP} and \cite{Blanc_2bouts}, complete the panorama of understanding which topological surfaces can be leaves in minimal hyperbolic surface laminations when the topology of the generic leaf is given. In all cases, all possible topologies can be realized simultaneously. 
\smallskip
{\noindent\footnotesize \textbf{MSC\textup{2010}:} Primary 57R30. Secondary  37B05.}\\
{\noindent\footnotesize \textbf{Keywords:} Hyperbolic surface laminations, topology of surfaces, coverings of graphs.}

\end{abstract}

\section{Introduction}

A \emph{surface lamination} is a compact and metrizable topological space $\cL$ locally modeled on the product of the unit disc by a compact set. It comes with an atlas, giving coordinates to these open sets, whose transition functions preserve the disc factor of the product structure. These discs glue together to form surfaces whose global behavior may be very complicated, we call these surfaces the \emph{leaves} of the lamination. We are interested in \emph{minimal} laminations, i.e. those laminations in which every leaf is dense. Note that every lamination contains a minimal lamination. We refer the reader to \cite{candel-conlon} for the general theory of laminations.

The compact factors of the local product srtructure are called \emph{transversals}, when these transversals are homeomorphic to Cantor sets, we say that the lamination is a \emph{solenoid} (see \cite{Sullivan,Verjovsky}). When transition functions are holomorphic along the disc coordinate, we say that $\cL$ is a \emph{Riemann surface lamination} (see \cite{Ghys_Laminations} for the general theory). Finally, Riemann surface laminations where all leaves are of hyperbolic type are called \emph{hyperbolic surface laminations}. In this case, there exists a complete hyperbolic metric in every leaf which varies continuously in the transverse direction (see \cite{Candel}). Hyperbolic surface laminations appear quite frequently and a topological characterization is given by Candel in \cite{Candel}.

Thanks to Cantwell-Conlon \cite{Cantwell_Conlon}, we understand the topology of generic leaves of minimal surface laminations. Here generic means the Baire point of view. According to this work the generic leaf has $1$, $2$ or a Cantor set of ends, and either it has genus zero or every end is accumulated by genus. This gives six possible topological types for a generic leaf. Moreover all the leaves in a dense and saturated residual set are homeomorphic: \cite[Theorem B]{Cantwell_Conlon}. See \cite{Ghys_generic} for a probabilistic counterpart of this theorem. 

The present work is devoted to the topological study of leaves of minimal hyperbolic surface laminations. More precisely, we are interested in describing the possible topologies of non-generic leaves that can occur when the topological type of the generic leaf is given. 

In a companion paper \cite{ABMP}, written together with Mart\'inez and Potrie, we treated the case of minimal hyperbolic solenoids with a simply connected generic leaf. More precisely, we constructed a minimal lamination by hyperbolic surfaces such that every non-compact surface is homeomorphic to a leaf of the lamination. To achieve this we considered the inverse limit of a (very carefully chosen) tower of finite coverings over a closed hyperbolic surface. In his unpublished PhD thesis \cite{Blanc_these} Blanc constructed a similar example, with a completely different method  (inspired by Ghys-Kenyon's construction \cite{Ghys_Laminations}). It is worth mentioning that his lamination is not of hyperbolic type. We also refer to the recent and interesting work of Meni\~no-Gusm\~ao about realization of topological types in leaves of minimal hyperbolic foliations of codimension $1$: see \cite{MG}.

In this paper we treat the case of minimal hyperbolic laminations whose generic leaves are \emph{Cantor trees}, i.e. are homeomorphic to a sphere minus a Cantor set. An example of such object is the classical \emph{Hirsch's foliation} (see \cite{Hirsch1975} for the original construction and for example \cite{AlvarezLessa,Cantwell_Conlon,Ghys_generic} for the minimal model): its leaves are Cantor trees, except countably many, which are homeomorphic to the torus minus a Cantor set. We show that, unlike in the case of simply connected generic leaves, there are topological obstructions to be a leaf of such laminations. Precisely, in Proposition \ref{p.obstpant} we show that \emph{if a minimal hyperbolic lamination has a non simply connected generic leaf then all of its leaves satisfy condition $(\ast)$ defined below.}

\noindent \textbf{Condition $(\ast)$ --} A non-compact surface satisfies \emph{condition $(\ast)$} if its isolated ends are accumulated by genus.\\

Then, we prove that condition $(\ast)$ is the only topological obstruction for being the leaf of such a lamination. Our main result is the following theorem.

\begin{maintheorem}\label{maintheorouille}
There exists a minimal hyperbolic surface lamination $\cL$ such that
\begin{itemize}
\item the generic leaf of $\cL$ is a Cantor tree;
\item every non-compact surface satisfying condition $(\ast)$ is homeomorphic to a leaf of $\cL$.
\end{itemize}
\end{maintheorem}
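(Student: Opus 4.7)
The plan is to construct $\cL$ as an inverse limit of a tower of finite leafwise coverings $\cdots \to \cL_n \to \cL_{n-1} \to \cdots \to \cL_0$, adapting the strategy of the companion paper \cite{ABMP} but replacing the base closed hyperbolic surface by a minimal hyperbolic surface lamination $\cL_0$ all of whose leaves are Cantor trees (a hyperbolic analogue of the minimal Hirsch foliation). This way the underlying Cantor-tree geometry is already present at the base of the tower, and the tower will only be used to introduce genus at prescribed places.

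First I would enumerate in a countable list $(S_k)_{k \in \N}$ a dense family of homeomorphism types of non-compact surfaces satisfying condition $(\ast)$, using the Kerékjártó--Richards invariants (end space, subset of ends accumulated by genus, genus). Since condition $(\ast)$ requires every isolated end of any $S_k$ to be accumulated by genus, each $S_k$ can be built from a Cantor tree by attaching a sequence of handles converging to a closed subset of the Cantor ends. This gives the combinatorial skeleton to realize. At each step $n$ of the tower I would then select a finite leafwise covering $\cL_n \to \cL_{n-1}$ that, over a prescribed compact piece of a chosen leaf, adds a handle at a prescribed location; the covering must be trivial outside a controlled neighborhood and must not destroy minimality. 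Its existence rests on LERF for surface groups, together with a transverse recurrence argument guaranteeing that any chosen loop in a leaf can be split by a finite cover while keeping the whole lamination minimal.

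A bookkeeping scheme over $\N \times \N$ ensures that each target type $S_k$ is addressed infinitely often, so that for each $k$ one can select a compatible sequence of points $x_n \in \cL_n$ whose limit in $\cL = \varprojlim \cL_n$ yields a leaf homeomorphic to $S_k$. The inverse limit inherits the structure of a minimal hyperbolic surface lamination from the leafwise coverings. For the genericity statement, I would show that the set of points of the transversal whose tower of lifts is only affected by finitely many handle-adding steps is a dense $G_\delta$, and that such points have Cantor-tree leaves in $\cL$; this identifies the generic leaf. For the upper bound on topology, Proposition \ref{p.obstpant} guarantees that every leaf of $\cL$ automatically satisfies condition $(\ast)$.

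The main obstacle is the simultaneous control of minimality and leaf topology along the tower. The finite covers must be rich enough to introduce handles near any prescribed end of any prescribed leaf, yet conservative enough that the set of transverse points whose leaves pick up handles infinitely often stays meager. This requires at each level a precise partition of the transversal, a handle-adding procedure that can be localized to an arbitrarily small clopen subset, and explicit verification that the minimality of $\cL_n$ survives these localized modifications; this last point is where most of the technical work is expected to concentrate.
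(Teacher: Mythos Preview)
Your proposal departs from the paper's approach and has a conceptual gap at its core. You want a tower of ``finite leafwise coverings'' over a base lamination $\cL_0$ whose leaves are already Cantor trees, choosing at each stage a cover that ``adds a handle at a prescribed location'' on one chosen leaf while being ``trivial outside a controlled neighborhood''. But a covering of a minimal lamination is a global object: any non-trivial covering behaviour imposed near a compact piece of one leaf propagates, via the local product structure and minimality, to neighbourhoods of every leaf. There is no mechanism for a cover to add a handle to a single leaf in isolation, so the selective bookkeeping you describe cannot be carried out this way. The appeal to LERF for surface groups is also misplaced: the leaves of $\cL_0$ are Cantor trees with infinitely generated free fundamental groups, not closed surface groups, and in any case separability of a subgroup of one leaf's fundamental group does not by itself yield a coherent finite cover of the whole lamination with the localized effect you need.

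The paper avoids these issues by keeping the base of the tower a single closed surface (reduced, via the pinching map of Proposition~\ref{p.from_graphs_surfaces}, to the figure-eight graph). Leaves of the inverse limit are then honest covers of $\Sigma_0$, and their topology is encoded by which loops lift; this is controlled combinatorially through the formalism of $C$-graphs and forests of $C$-graphs, with the surgery Lemmas~\ref{l.bblock} and~\ref{l.multiplelift} and the Main Lemma~\ref{l.main} providing the inductive step. A universal forest (Proposition~\ref{p.forestuniv}) packages all target ends pairs simultaneously, and a separate modification (Proposition~\ref{p.include_finite_homo}) handles the finitely many classifying triples with finite Betti number. Minimality of the inverse limit is automatic for towers over a closed surface, and the generic leaf is identified via Proposition~\ref{p.generic_tree}; none of these steps require controlling covers of an already-laminated base.
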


The method and formalism in the proof of Theorem \ref{maintheorouille} resemble the ones used in \cite{ABMP}. Namely, we construct laminations taking inverse limits of towers of finite coverings. However, in this case we reduce the proof of Theorem \ref{maintheorouille} to that of Theorem \ref{t.grafos} which involves towers of finite coverings of graphs. The idea of using towers of coverings of graphs to get interesting solenoidal manifolds is not new and can be found for example in \cite{McCord}, \cite{Schori} or \cite{CFL}.

However, there is a big difference between the proofs of Theorem \ref{t.grafos} and those appearing in \cite{ABMP}: in this case, due to the very combinatorial nature of the problem, we cannot prefix the topological types of the leaves that we want to embed in the graph lamination. For this reason we need to define new objects called \emph{$C$-graphs} which represent graphs up to some local information that does not affect the (large scale) topological invariants that we need to realize in our leaves. It turns out that we can construct a ``big'' family of such $C$-graphs, realizing the desired topological invariants and which we can simultaneously ``realize'' inside an inverse limit lamination. See Section \ref{ss.strategy} for a more precise outline of our strategy.

\paragraph{\textbf{Other generic leaves}} A \emph{Cantor tree with handles} is by definition a non-compact and orientable surface having a Cantor set of ends, each of which being accumulated by genus. By performing a surgery, we obtain in Section \ref{s.corollary} the following corollary.

\begin{corollary}\label{corollary}
There exists a minimal hyperbolic surface lamination $\cL'$ such that
\begin{itemize}
\item the generic leaf of $\cL'$ is a Cantor tree with handles;
\item for every non-compact surface $\Sigma$ such that every end is accumulated by genus, there exists a leaf of $\cL'$ homeomorphic to $\Sigma$.
\end{itemize}

\end{corollary}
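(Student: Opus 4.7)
The strategy is to construct $\cL'$ from the lamination $\cL$ of Theorem \ref{maintheorouille} by a classical \emph{handle-attachment surgery} along a clopen transversal. Pick a flow box $U \cong D \times T$ in $\cL$, with $D$ an open hyperbolic disc and $T$ a Cantor transversal, and replace $U$ by $\Sigma_0 \times T$, where $\Sigma_0$ is a compact orientable genus-one surface with a single totally geodesic boundary circle matched with $\partial D$. The result $\cL'$ is still a compact hyperbolic surface lamination, and its leaves are in canonical bijection with those of $\cL$: each leaf $L$ yields a leaf $L'$ obtained by excising the discs through the points of $L \cap T$ and gluing in one handle at each. Minimality of $\cL'$ is inherited from $\cL$, since the transverse Cantor structure and the leaf-to-leaf correspondence are preserved by the surgery.

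The key topological input is a recurrence statement in $\cL$: for every leaf $L$ and every end $e$ of $L$, the intersection $L \cap T$ accumulates on $e$. This is where minimality of $\cL$ is used in an essential way, via a standard plaque-shadowing argument showing that end neighbourhoods of any leaf meet every flow box. Applied to the generic leaf of $\cL$---a Cantor tree by Theorem \ref{maintheorouille}---the surgery attaches handles at a countable subset clustering on every end, producing a surface with Cantor end space all of whose ends are accumulated by genus, i.e.\ a Cantor tree with handles. This yields the first bullet of the statement.

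For the second bullet, let $\Sigma$ be a non-compact orientable surface all of whose ends are accumulated by genus. Its isolated ends (if any) are then accumulated by genus by hypothesis, so $\Sigma$ satisfies condition $(\ast)$, and Theorem \ref{maintheorouille} produces a leaf $L_0$ of $\cL$ homeomorphic to $\Sigma$. The leaf $L_0'$ of $\cL'$ associated to $L_0$ has the same end space as $L_0$ (handles do not change ends), all of its ends still accumulated by genus, and infinite total genus. The Ker\'ekj\'art\'o--Richards classification of non-compact orientable surfaces by the triple (end space, subspace of ends accumulated by genus, total genus) then yields $L_0' \cong \Sigma$.

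The two technical hurdles I foresee are the preservation of the hyperbolic lamination structure during the surgery---which I would secure by arranging that the disc $D$ in the flow box has totally geodesic boundary of length matching $\partial \Sigma_0$, and, if necessary, invoking Candel's uniformisation \cite{Candel} to obtain the leafwise hyperbolic metric on $\cL'$ a posteriori---and the recurrence lemma above, which I expect to be the conceptual heart of the argument.
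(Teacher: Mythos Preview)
Your proposal is correct and follows the same approach as the paper: remove a trivialised product $D \times K$ from $\cL$, glue in (one-holed torus)$\times K$, use minimality to see that every end of every leaf meets the surgery locus (so all ends become accumulated by genus), and invoke Candel for the hyperbolic structure. The only cosmetic differences are that the paper exploits the explicit Cantor-bundle $\Pi_0:\cL\to\Sigma_0$ coming from the inverse-limit construction rather than a generic flow box, and for the second bullet argues directly that every end space is realised in $\cL$ rather than first picking a leaf already homeomorphic to $\Sigma$.
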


\begin{remark}\label{r.cirugia} Similarly, applying the same construction to the lamination constructed in Theorem A of \cite{ABMP}, we see that the previous result holds if we impose that the generic leaf is an \emph{infinite Loch Ness monster}, i.e. has one end and infinite genus.
\end{remark}

Notice that Proposition \ref{p.obstpant} together with Theorem \ref{maintheorouille} and Corollary \ref{corollary} show the precise obstructions to be a leaf of a minimal lamination by hyperbolic surfaces whose generic leaf has a Cantor set of ends. Moreover, we show how to embed all leaves with allowed topological types simultaneously. On the other hand, Theorem A in \cite{ABMP} together with Remark \ref{r.cirugia} give analogous results for the case where the generic leaf has one end.

Finally, in \cite{Blanc_2bouts} Blanc gives a complete description of which non-compact surfaces can be realized as leaves of minimal laminations by surfaces where the generic leaf has two ends: all leaves have one or two ends. If such a lamination carries a hyperbolic structure then Proposition \ref{p.obstpant} implies that the generic leaf must be a \emph{Jacob ladder} (with two ends, each of which being accumulated by genus) and the only surface that can appear, other than the Jacob ladder, is the Loch-Ness monster.  Blanc builds in \cite[Section 2]{Blanc_2bouts} an example of minimal foliation by surfaces whose leaves are homeomorphic to a Jacob ladder with the exception of four leaves which are homeomorphic to a Loch-Ness monster. Notice that the previous example admits hyperbolic structures because all leaves are of infinite topological type. 

This completes the picture: \emph{we completely understand the possible topologies of leaves of minimal laminations by hyperbolic surfaces in terms of the topology of the generic leaf}. Moreover, for each topological type of the generic leaf, \emph{all possible leaves can appear simultaneously}. This is summarized in Table \ref{table:summary}.

\begin{table}[ht]
	\begin{center}
		\begin{tabular}{|c|c|}
			\hline
			\textbf{Generic leaf} & \textbf{Possible leaves}\\
			\hline
			Disc  & All surfaces\\
			\hline
			Cantor tree  & Surfaces with condition $(\ast)$\\
            \hline
            Loch-Ness monster & Surfaces with ends accumulated by genus\\
            \hline
            Cantor tree with handles & Surfaces with ends accumulated by genus\\
            \hline
            Jacob ladder  & Jacob ladder and Loch-Ness monster\\
            \hline
		\end{tabular}
	\end{center}
	\caption{Possible topologies of leaves of minimal hyperbolic surface laminations.}\label{table:summary}
\end{table}

\subsection{Organization of the paper}In Section \ref{s.prelim} we give basic definitions and notions that will be used throughout the text. Then, in Section \ref{s.graphs} we show how to deduce Theorem \ref{maintheorouille} from an analogous theorem for laminations by graphs (Theorem \ref{t.grafos}). In \S \ref{ss.strategy} we give an informal strategy of the proof. In Section \ref{s.decorouille} we define $C$-graphs and prove their basic properties. Then, in Section \ref{ss.forests} we define forests of $C$-graphs, their limit graphs and their realizations inside towers of finite coverings. In Section \ref{s.surgeries} we define the surgery operation and use it two prove our main Lemma (Lemma \ref{l.main}) saying that some families of forests can be included in towers. In Section \ref{s.proof} we prove Theorem \ref{t.grafos} by including a particular forest of $C$-graphs in a tower but assuming the existence of this object. In Section \ref{s.forestuniv} we give the proof of the existence of the aforementioned forest of $C$-graphs (Proposition \ref{p.forestuniv}). Finally, in Section \ref{s.Appendix} we prove Corollary \ref{corollary} and a simple but highly technical Lemma used in the proof of Proposition \ref{p.forestuniv}.

\section{Preliminaries}\label{s.prelim}
In this section we define basic notions that will be used throughout the text. Also, we show that condition $(\ast)$ is an obstruction to be a leaf of a minimal hyperbolic surface lamination with non-simply connected generic leaf. 
\subsection{Non-compact surfaces and condition $(\ast)$}\label{ss.ncsurfaces}

\paragraph{\textbf{Ends of a space }} Let $G$ be a connected, locally connected and locally compact topological space and $(K_n)_{n\in\N}$ an exhausting and increasing sequence of compact subsets of $G$. An \emph{end} of $G$ is a strictly decreasing and infinite sequence $(\cC_n)_{n\in\N}$ where $\cC_n$ is a connected component of $G\moins K_n$. We denote by $\cE(G)$ the \emph{space of ends} of $G$. It is independent of the choice of $K_n$.

The space of ends of $G$ possesses a natural topology which makes it a totally disconnected, compact and metrizable space. Therefore it is homeomorphic to a closed subset of a Cantor set. To be more precise, take an end $e$ defined by a sequence $(\cC_n)_{n\in\N}$. Then, any open set $V\subseteq G$ such that $\cC_n\subseteq V$ for all but finitely many $n\in\N$ defines a neighbourhood of $e$ consisting of those ends whose defining sequences also lie inside $V$ for all but finitely many $n\in\N$.

\paragraph{\textbf{Classifying triples }} In what follows, a \emph{classifying triple} is the data $\tau=(g,\cE_0,\cE)$ of
\begin{itemize}
\item a number $g\in\N\cup\{\infty\}$;
\item a pair of nested spaces $\cE_0\dans\cE$ where is $\cE_0$ closed and $\cE$ is a nonempty, totally disconnected and compact topological space; satisfying
\item $g=\infty$ if and only if $\cE_0\neq\vide$.
\end{itemize}

We say that two classifying triples $\tau=(g,\cE_0,\cE)$ and $\tau'=(g',\cE_0',\cE')$ are \emph{equivalent} if $g=g'$ and there exists an homeomorphism $h:\cE\to\cE'$ such that $h(\cE_0)=\cE_0'$.

\paragraph{\textbf{Noncompact surfaces }} We now recall the modern classification of surfaces as it appears in \cite{Ric}. We are only interested in orientable surfaces.

Say that an end $e=(\cC_n)_{n\in\N}$ of a surface $\Sigma$ is \emph{accumulated by genus} if for every $n\in\N$, the surface $\cC_n$ has genus. The ends accumulated by genus form a compact subset that we denote by $\cE_0(\Sigma)\dans\cE(\Sigma)$. In our terminology the triple $\tau(\Sigma)=(g(\Sigma),\cE_0(\Sigma),\cE(\Sigma))$ is a classifying triple.

\begin{theorem}[Classification of surfaces]\label{classification}
Two orientable noncompact surfaces $\Sigma$ and $\Sigma'$ are homeomorphic if and only if their classifying triples $\tau(\Sigma)$ and $\tau(\Sigma')$ are equivalent. Moreover for every classifying triple $\tau$ there exists an orientable noncompact surface $\Sigma$ such that $\tau(\Sigma)$ is equivalent to $\tau$.
\end{theorem}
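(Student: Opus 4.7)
The plan is to adapt the classical Ker\'ekj\'art\'o--Richards classification of noncompact surfaces to the language of classifying triples. The easy direction (topological invariance of $\tau(\Sigma)$) is essentially formal: any homeomorphism $\Phi:\Sigma\to\Sigma'$ maps an exhausting sequence $(K_n)$ of $\Sigma$ to an exhausting sequence of $\Sigma'$, and hence induces a homeomorphism $\cE(\Sigma)\to\cE(\Sigma')$. Since ``having genus'' is a topological invariant of a connected surface, this induced homeomorphism sends $\cE_0(\Sigma)$ onto $\cE_0(\Sigma')$, and the total genus $g(\Sigma)$ is itself homeomorphism-invariant. The compatibility clause $g=\infty\iff\cE_0\neq\vide$ follows from a pigeonhole argument on any exhaustion by compact bordered subsurfaces: if $g<\infty$ then all handles lie in some $K_n$, so no end is accumulated by genus, while if $g=\infty$ the finite branching of the ends at each level of the exhaustion forces at least one end to collect infinitely many handles.

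For the hard direction, I would prove both the realization and the classification via \emph{canonical exhaustions}: sequences $K_1\dans K_2\dans\ldots$ of compact bordered subsurfaces with pairwise disjoint circle boundaries such that every component of $\Sigma\moins K_n$ has exactly one boundary circle, and the resulting clopen partitions of $\cE(\Sigma)$ form a decreasing sequence shrinking to points. For realization, given $\tau=(g,\cE_0,\cE)$ I would embed $\cE$ as a closed subset $E$ of the standard Cantor set in $S^2$; the surface $S^2\moins E$ has end space canonically homeomorphic to $\cE$. When $g=0$ this already realizes $\tau$. When $g=\infty$, and hence $\cE_0\neq\vide$, I would choose a sequence of pairwise disjoint small discs in $S^2\moins E$ accumulating on $E$ exactly along $\cE_0$ (possible since $\cE_0\dans\cE$ is closed in a compact totally disconnected space), attach an orientable handle in each disc, and verify that the resulting surface has classifying triple equivalent to $\tau$.

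For the classification, given orientable noncompact surfaces $\Sigma,\Sigma'$ with equivalent triples and a witnessing homeomorphism $h:\cE(\Sigma)\to\cE(\Sigma')$ taking $\cE_0(\Sigma)$ onto $\cE_0(\Sigma')$, I would inductively construct synchronized canonical exhaustions $(K_n)$ of $\Sigma$ and $(K_n')$ of $\Sigma'$ whose induced clopen partitions of the end spaces are conjugated by $h$, and such that for every pair of matched complementary tubes $U\dans\Sigma\moins K_n$ and $U'\dans\Sigma'\moins K_n'$ the bordered surfaces $K_{n+1}\cap\overline{U}$ and $K_{n+1}'\cap\overline{U'}$ have the same genus and the same number of boundary circles. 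The classification of \emph{compact} bordered surfaces then supplies piecewise homeomorphisms between $K_{n+1}\moins K_n$ and $K_{n+1}'\moins K_n'$ that agree on the separating boundary circles with the ones already built, and the inductive limit of these partial matchings defines a homeomorphism $\Sigma\to\Sigma'$.

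The main obstacle is this synchronized-exhaustion step. At each stage one has to refine the clopen end-partitions of $\Sigma$ and $\Sigma'$ in a way that remains compatible with $h$, and then enlarge $K_n$ and $K_n'$ to realize these refinements with matching genus and boundary counts on every pair of new tubes. The infinite-genus case is especially delicate: handles must accumulate precisely on the ends in $\cE_0$ on both sides while being distributed tube-by-tube compatibly with $h$, which forces a careful combinatorial bookkeeping combining the classification of compact bordered surfaces with the end-topology description given above.
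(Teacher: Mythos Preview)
The paper does not prove this theorem: it is stated as background and attributed to Richards \cite{Ric}, so there is no in-paper argument to compare your proposal against. Your sketch is essentially the classical Ker\'ekj\'art\'o--Richards proof, which is indeed the source the paper cites; in that sense your approach is the ``same'' as what the paper relies on, just spelled out rather than quoted.
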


\paragraph{\textbf{Condition $(\ast)$}} Say that a classifying triple $\tau=(g,\cE_0,\cE)$ satisfies \emph{condition $(\ast)$} when every isolated point of $\cE$ belongs to $\cE_0$. 
We also say that the pair $(\cE_0,\cE)$ satisfies condition $(\ast)$.

Say that a surface $\Sigma$ satisfies condition $(\ast)$ when its classifying triple does so. This means that its isolated ends are accumulated by genus. 

\subsection{Hyperbolic surface laminations and towers of coverings}

\paragraph{\textbf{Reeb's stability theorem }} We need the classical Reeb's stability theorem. It is usually stated for foliations (see \cite{Camacho_LinsNeto}), however the proof can be adapted in the laminated context (see also the proof given by Lessa in his thesis: \cite{Lessa}). We refer to any of these two references (\cite{Camacho_LinsNeto} or \cite{Lessa}) for the definition of the holonomy group of a leaf of a lamination.

\begin{theorem}[Reeb's stability theorem]\label{t.Reeb}
Let $\cL$ be a lamination and $U$ be an open subset of some leaf $L$ of $\cL$ with compact closure. Assume that the holonomy germ along every closed path in $U$ is trivial. Then there exists a neighbourhood $W$ of $U$ in $\cL$ and a homeomorphism $\phi:W\to U\times T$, the set $T$ being a transversal to $\cL$, such that $\phi_\ast\cL$ is the trivial lamination $(U\times\{t\})_{t\in T}$.
\end{theorem}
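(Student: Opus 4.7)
The proof is essentially the classical one for foliations, adapted to the laminated setting where transversals are abstract compact metrisable sets. I outline it as follows.

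First, since $\overline{U}$ is compact and contained in the leaf $L$, I would cover it by a finite collection of distinguished open sets $V_1,\dots,V_N$ of $\cL$, each equipped with a chart $\phi_i:V_i\to D_i\times T_i$ where $D_i$ is an open disc in $L$ and $T_i$ is a transversal. Shrinking the $V_i$ and refining, I may assume that each plaque of $V_i$ meeting $U$ is entirely contained in some fixed relatively compact subset, and that the pattern of intersections $V_i\cap V_j$ is as simple as possible (each nonempty intersection is connected and each triple intersection trivial up to the plaque structure, which is achievable by a standard refinement argument on a locally connected space).

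Second, I would fix a base point $x_0\in U$ and a small transversal $T\subset T_{i_0}$ at $x_0$ in the chart $V_{i_0}$ containing $x_0$. For any point $y\in U$, choose a path $\gamma:[0,1]\to U$ from $x_0$ to $y$. The path can be subdivided so that each piece lies in a single chart $V_{i_k}$, and on each piece the local product structure identifies a small piece of the transversal at the starting point with a small piece of the transversal at the endpoint (via the plaque through each transverse point). Composing these identifications gives the holonomy germ $\hol_\gamma$ along $\gamma$. By assumption this germ depends only on the endpoints, not on $\gamma$, because any two such paths differ by a closed loop in $U$ along which the holonomy is trivial.

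Third, for each $y\in U$ I define $\phi^{-1}(\{y\}\times T)$ as the image of $T$ under $\hol_\gamma$; well-definedness follows from the previous step. To produce an honest homeomorphism on an open neighbourhood $W$ of $U$, I would use a Lebesgue-number argument: cover $U$ by finitely many plaques $P_1,\dots,P_M$ of the charts $V_i$ so that each $P_k$ meets the previous ones in a controlled way. Since there are only finitely many holonomy identifications to perform and each is a homeomorphism of transversals defined on an open neighbourhood of the chosen slice, one can shrink $T$ once and for all so that all the identifications are defined simultaneously on $T$, and glue them to obtain the trivialising homeomorphism $\phi:W\to U\times T$.

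The main obstacle is the uniformity step: the trivial-holonomy hypothesis gives trivialising germs locally, but one must guarantee a single transversal $T$ on which every needed identification is defined and consistent. This is where compactness of $\overline{U}$ is used in an essential way, via a finite subcover, together with the fact that the holonomy germ along any closed loop in $U$ is trivial, so one can combine the local trivialisations over finitely many overlaps without encountering monodromy. The continuity of the transverse structure and the standard argument that the union of plaques through $T$ based along the finitely many overlap moves produces an open, saturated neighbourhood of $U$ in $\cL$ complete the construction.
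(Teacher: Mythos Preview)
The paper does not supply its own proof of this theorem: it states the result and refers the reader to \cite{Camacho_LinsNeto} for the foliated version and to \cite{Lessa} for the adaptation to laminations. Your outline is precisely the classical argument from those references---finite cover by distinguished charts, path-independence of holonomy transport under the trivial-holonomy hypothesis, and a compactness/Lebesgue-number argument to shrink the transversal uniformly---so there is nothing to compare; you are reproducing the approach the paper invokes by citation.
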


\paragraph{\textbf{Hyperbolic surface laminations }} Let $\cL$ be a compact metric space endowed with a structure of Riemann surface lamination (see \cite{Ghys_Laminations}). We say that it is a \emph{hyperbolic surface lamination} if the universal cover of every leaf is conformally equivalent to a disc. Using Candel's theorem \cite{Candel} this is equivalent to the existence of a \emph{leafwise Riemannian metric} which varies transversally continuously in local charts, such that leaves have Gaussian curvature $-1$ at every point. Recall that $\cL$ is said to be \emph{minimal} when all of its leaves are dense. 

Next we give a topological obstruction for a surface to be the leaf of a compact minimal hyperbolic surface lamination without a simply connected leaf. 

\begin{proposition}\label{p.obstpant}
Let $\cL$ be a minimal lamination by hyperbolic whose generic leaf is not a disc. Then isolated ends of leaves of $\cL$ are accumulated by genus, i.e. leaves satisfy condition $(\ast)$. Moreover, if there exists a leaf with genus and without holonomy, then every end of every leaf of $\cL$ is accumulated by genus.
\end{proposition}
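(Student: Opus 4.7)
The plan is to combine Reeb's stability (Theorem \ref{t.Reeb}) with minimality in order to forbid planar isolated ends. First, using the classical fact that the set of holonomy-free leaves of a minimal lamination is residual together with the hypothesis that the generic leaf is not a disc, I would produce a leaf $L_0$ carrying an essential simple closed curve $\alpha$ along which holonomy is trivial. Reeb's stability applied to an annular neighborhood $A$ of $\alpha$ in $L_0$ then yields a saturated product neighborhood $W\cong A\times T$ in $\cL$; by minimality, every leaf of $\cL$ meets $W$ and hence contains annular plaques each carrying a topological copy of $\alpha$.

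Next, suppose for a contradiction that some leaf $L$ has an isolated end $e$ not accumulated by genus. By Theorem \ref{classification}, $e$ admits a neighborhood $U\subset L$ homeomorphic to a half-open cylinder $S^1\times[0,\infty)$, bounded by a simple closed curve $\gamma$. I would split into two subcases. If $\gamma$ is null-homotopic in $L$, then $L$ is an open disc, hence holonomy-free; applying Reeb's stability to a disc in $L$ bounded by a plaque of $W$ produces a second saturated product of discs in $\cL$, and comparing this product with $W$ through the transverse structure, combined with the density of $L_0$, forces a copy of $\alpha$ in $L_0$ to bound a disc, contradicting its essentiality. If instead $\gamma$ is essential in $L$, then, $L$ being a complete hyperbolic surface, $\gamma$ is freely homotopic to a simple closed geodesic $\gamma^*$ bounding a geometric funnel (or $e$ is a cusp), and the geometric structure of the end is rigidly determined: such an end contains no essential simple closed curve whose hyperbolic length stays in a prescribed bounded range as one exits to $e$. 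Using the transverse continuity of the leafwise hyperbolic metric to transport copies of $\alpha$ of length close to $\ell_{L_0}(\alpha)$ along plaques of $W$, a recurrence of $L$ to $W$ inside $U$ would produce an essential curve in the end of length comparable to $\ell_{L_0}(\alpha)$, incompatible with this rigidity.

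For the ``moreover'' part, assume some leaf $L_1$ has genus and no holonomy. Take a one-holed torus $H\subset L_1$; all loops of $H$ have trivial holonomy, so Reeb's stability provides a saturated product neighborhood $W_H\cong H\times T_H$ in $\cL$. Given any end $e$ of any leaf $L$ and a sequence $p_n\in L$ escaping to $e$, minimality forces $L$ to meet $W_H$ in every neighborhood of each $p_n$, yielding handles of $L$ accumulating at $e$; hence every end of every leaf is accumulated by genus. The main obstacle in the argument is the essential-$\gamma$ subcase of the first part: one has to convert the \emph{topological} absence of essential curves in the planar end $U$ into a \emph{metric} obstruction via the transverse continuity of the leafwise hyperbolic metric and the product structure of $W$, and this requires careful bookkeeping of which plaques of $W$ fall inside $U$ under recurrence of $L$.
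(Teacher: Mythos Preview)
Your overall plan---Reeb stability on a holonomy-free essential curve plus minimality---is the same as the paper's, and your treatment of the ``moreover'' part is essentially correct. But the first part has a real gap: you never justify why the copies of $\alpha$ produced inside a leaf $L$ are \emph{essential} in $L$. In your essential-$\gamma$ subcase you simply assert that recurrence ``would produce an essential curve in the end,'' but a plaque core of $W$ landing far out in a funnel or cusp could very well be null-homotopic in $U$ (bounding a small disc there), and then your length-rigidity argument says nothing. Likewise, in your null-homotopic-$\gamma$ subcase, the Reeb product of discs you build around $D\subset L$ need not extend to the transverse parameter of the original $\alpha$: the size of the transversal $T'$ depends on the (possibly huge) disc $D$, so you cannot conclude that $\alpha$ itself bounds a disc in $L_0$.

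The paper closes this gap cleanly by taking $\alpha$ to be a simple closed \emph{geodesic} and invoking persistence of closed geodesics under the transversely continuous family of hyperbolic metrics: this guarantees that every plaque of $W$ contains a simple closed geodesic, and closed geodesics in a hyperbolic surface are never null-homotopic. One then gets infinitely many \emph{disjoint} simple closed geodesics in $L$ converging to any isolated end $e$; but a half-cylinder neighborhood of a planar end contains at most one simple closed geodesic, contradiction. No case split is needed. The missing idea in your sketch is precisely this upgrade from ``curves of controlled length'' to ``genuine geodesics.''
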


\begin{proof} 
We present a slight variation of a proof that appears in \cite{ADMV}. Since the generic leaf is a hyperbolic surface with trivial holonomy (that was proved independently by Epstein-Millett-Tischler \cite{Epstein_Millett_Tischler} and by Hector \cite{Hector}) and which is not a disc by hypothesis, it contains a simple closed geodesic without holonomy $\gamma$.

Using Reeb's stability theorem \ref{t.Reeb}, the transverse continuity of Candel's hyperbolic metric and the persistence of closed geodesics under perturbations of hyperbolic metrics, we show that there exists a neighbourhood $U$ of $\gamma$ where $\cL$ induces a trivial lamination by annuli, each one of them containing a simple closed geodesic.

Assume that a leaf $L$ possesses an isolated end $e$. Since $\cL$ is minimal there exists a sequence $(x_n)_{n\in\N}$ in $L$ representing $e$ such that $x_n\in U$ for every $n$. Therefore there exists a sequence $(\gamma_n)_{n\in\N}$ of disjoint simple closed geodesics inside $L$ which converges to $e$. This implies that $e$, which is isolated, is not represented by a decreasing sequence of annuli, so it must be accumulated by genus. 

On the other hand, notice that an end $e$ is accumulated by genus if and only if there exists a sequence of simple closed geodesics $\gamma_n^1,\gamma_n^2$ in $L$ such that:
\begin{itemize}
\item $\gamma_n^i$ converges to $e$ for $i=1,2$
\item $\gamma_n^1$ and $\gamma_n^2$ intersect in exactly one point.
\end{itemize}
In this case we obtain the desired handles taking tubular neighbourhoods of $\gamma_n^1\cup\gamma_n^2$. Now, suppose the existence of a leaf without holonomy and with genus, so it contains a pair of simple closed geodesics $\gamma^1,\gamma^2$ intersecting in exactly one point. Applying again Reeb's stability and closed geodesics under perturbations of hyperbolic metrics we deduce that every end of every leaf is accumulated by pairs of closed geodesics cutting exactly once, as desired. 
\end{proof}

We show below that this obstruction is the only one and that it is possible to realize simultaneously all surfaces satisfying condition $(\ast)$ in a minimal lamination by hyperbolic surfaces whose generic leaf is a Cantor tree. This lamination will be constructed as the inverse limit of a carefully chosen tower of finite coverings of a genus $2$ surface.

\paragraph{\textbf{Towers of coverings and laminations }} Define a \emph{tower of finite coverings over a hyperbolic surface} as a sequence $\T = \{p_n : \Sigma_{n+1} \to \Sigma_n \}$ where $\Sigma_0$ is a closed hyperbolic surface and each $p_n$ is an isometric finite covering. We define the \emph{inverse limit} of $\T$ as the set $$\mathcal{L}=\left\{\x=(x_n)_{n\in\N}\in\prod_n \Sigma_n:p_n(x_{n+1})=x_n\text{ for every }n\in\N\right\}.$$  

Since $\prod_n \Sigma_n$ is a product of compact spaces and $\mathcal{L}$ is defined by closed conditions it inherits a topology that makes it a compact and metrizable topological space. In order to define the lamination structure on $\cL$ consider $\Pi_0:\cL\to\Sigma_0$ the projection on the $0$-coordinate, $\{D_{i}\}_{i\leq m}$ a finite cover of $\Sigma_0$ by open discs and $\{U_i\}_{i\leq m}$  its associated covering of $\cL$ where $U_{i}:=\Pi_0^{-1}(D_i)$. It is not difficult to see that there exist homeomorphisms $\varphi_i:U_i\to D_i\times K_i$ with $K_i$ a Cantor set and that they satisfy the compatibility conditions
$$\fhi_j\circ\fhi_i^{-1}(z,t)=(\zeta_{ij}(z,t),\tau_{ij}(t)),$$
for $(z,t)\in\fhi_i(U_i\cap U_j)$, where $\zeta_{ij}$ is holomorphic in $z$ and $\tau_{ij}$ is a homeomorphism of the Cantor set. The connected components of $\cL$ are called the leaves and are naturally endowed with structures of Riemann surfaces (see \cite{Sibony_etc} for more details). In particular, $\Pi_0$ is a Cantor-bundle over $\Sigma_0$ and its restriction of to any leaf of $\cL$ defines an isometric covering of $\Sigma_0$. 
 
 A lamination obtained by an inverse limit of coverings is always minimal (see \cite{ABMP}).

\paragraph{\textbf{Holonomy representation}} As we mentioned before, the restriction of $\Pi_0$ to each leaf $L$ of $\cL$ induces a covering map onto $\Sigma_0$. So if we choose a point $x_0\in\Sigma_0$, a preimage by the projection $x\in L$ and a closed path $c$ based at $x_0$ there is a unique lift of $c$ to $L$ starting at $x$. Its endpoint only depends on the homotopy class $\gamma\in\pi_1(\Sigma_0)$ of $c$ and is denoted by $\tau_\gamma(x)$. 

The map $\tau_\gamma$ is an homeomorphism of the fiber $K$ of $x_0$ (which is a Cantor set) and the correspondence $\fhi:\pi_1(\Sigma_0)\to\Homeo(K), \gamma\mapsto\tau_\gamma^{-1}$ defines a group morphism. In the sequel this morphism will be called the \emph{holonomy representation of $\cL$}.

\paragraph{\textbf{Laminated bundles and supension}} Reciprocally any $\fhi:\pi_1(\Sigma_0)\to\Homeo(K)$ is the holonomy representation of a laminated Cantor-bundle over $\Sigma_0$ obtained by a process called \emph{suspension}. See for example \cite{Camacho_LinsNeto,candel-conlon} for a detailed treatment.

Let $\widetilde{\Sigma}_0$ be the universal covering of $\Sigma_0$ and $\rho$ be the action of $\pi_1(\Sigma_0)$ on $\widetilde{\Sigma}_0$ by deck transformations. The product $\rho\times\fhi$ defines the \emph{diagonal action} of $\pi_1(\Sigma_0)$ on $\widetilde{\Sigma}_0\times K$ which is properly discontinuous. The quotient of this action is denoted by $\cL$ and is called the \emph{suspension} of $\fhi$. 

The projection on the first coordinate descends to a fiber bundle $\Pi:\cL\to\Sigma_0$ with fiber $K$. Moreover the partition $(\widetilde{\Sigma}_0\times\{x\})_{x\in K}$ passes to the quotient and provides $\cL$ with a structure of lamination. With say that this lamination is \emph{transverse} to the bundle given by $\Pi$. Finally the holonomy representation of this lamination is given by $\fhi$.

Furthermore, if two laminated Cantor-bundles $\Pi:\cL\to\Sigma_0$ and $\Pi':\cL'\to\Sigma_0$, have the same holonomy representation they are \emph{equivalent} in the sense that there exists a homeomorphism $H:\cL\to\cL'$ satisfying $\Pi'\circ H=\Pi$ (so in particular $H$ preserves fibers) and taking leaves of $\cL$ onto leaves of $\cL'$.

Finally the holonomy representation of a lamination encodes all its dynamics. In particular, the lamination $\cL$ is minimal if and only if the action on $K$ given by its holonomy representation is minimal (i.e. all the orbits are dense). We refer to \cite[Chapter V]{Camacho_LinsNeto} for all these facts.

\section{From graphs to surfaces}\label{s.graphs}
In this section we translate Theorem \ref{maintheorouille} into an analogous theorem in the context of graphs (see Theorem \ref{t.grafos}). The second subsection is devoted to an outline of the strategy for the proof of this theorem.

\subsection{Graphs and laminations} A graph $\Gamma$ consists of
 a set of vertices $V(\Gamma)$ together with a set of edges $E(\Gamma)$ contained in $V(\Gamma)\times V(\Gamma)$. We will most of the time identify graphs with their topological realizations. We say that a map $f:\Gamma_1\to \Gamma_2$ between graphs is a \emph{graph morphism} if it is a continuous map preserving vertices and sending each edge either to a vertex or to an edge.

\paragraph{\textbf{Classifying triples and condition $(\ast)$ }} Let $\Gamma$ be a non-compact, locally finite graph and $(K_n)_{n\in\N}$ be an increasing and exhausting sequence of finite subgraphs. Recall that an end $e\in\cE(\Gamma)$ is a decreasing sequence $(\cC_n)_{n\in\N}$ of connected components of $\Gamma\moins K_n$. We say that the end $e$ is \emph{accumulated by homology} if $H_1(\cC_n,\R)\neq 0$ for every $n$. Notice that the definition does not depend on the choice of the exhausting sequence. We denote by $\cE_0(\Gamma)$ the set of ends accumulated by homology which is a closed subset of $\cE(\Gamma)$.  

We define the \emph{classifying triple} of a non-compact and locally finite graph $\Gamma$ as $\tau(\Gamma)=(g(\Gamma),\cE_0(\Gamma),\cE(\Gamma))$ where $\cE_0(\Gamma)$ and $\cE(\Gamma)$ have been defined above and where
$$g(\Gamma):=\beta_1(\Gamma)=\dim H_1(\Gamma,\R),$$
is the \emph{first Betti number} of $\Gamma$. Finally we say that a graph $\Gamma$ \emph{satisfies condition $(\ast)$} if its classifying triple $\tau(\Gamma)$ does so. This is equivalent to say that the isolated points of $\cE(\Gamma)$ belong to $\cE_0(\Gamma)$ (or that the isolated ends are accumulated by homology).

Notice that classifying triples are no longer complete invariants in the context of graphs (i.e. there is no analog of Theorem \ref{classification}). For example, we can imagine graphs with different systoles but with equivalent classifying triples. However, in \S \ref{ss.fromgtos} we formalize the procedure of thickening a graph to obtain a surface with equivalent classifying triple justifying our terminology (see Proposition \ref{p.from_graphs_surfaces}). Finally, note that we define condition $(\ast)$ for graphs so that the corresponding thickened surface also satisfies condition $(\ast)$.

\paragraph{\textbf{Laminations by graphs and towers }} Consider a tower of finite coverings $\U=\{q_n:\Gamma_{n+1}\to\Gamma_n\}$ over a finite graph $\Gamma_0$. As in the surface case, we can define $\cM$ as the inverse limit of the tower. For the same reason as in the surface case, $\cM$ is a compact and metrizable topological space, but in this case it is locally a product of a Cantor set by a graph, we call such a structure: a \emph{lamination by graphs}. As in the surface case, leaves correspond to connected components of $\cM$ and the restriction of the projection $\Pi_0$ to any leaf  of $\cM$ is a covering of $\Gamma_0$. 

Again as in the surface case, we say that $\cM$ is transverse to a Cantor-fiber bundle. It is possible to generalize the discussion on laminated bundles to this context, in particular we use that $\cM$ determines a holonomy representation $\fhi:\pi_1(\Gamma_0)\to\Homeo(K)$ and that such representation uniquely determines (up to equivalence) a Cantor-fiber bundle laminated by graphs via the suspension process.

\paragraph{\textbf{Generic leaf }} The following result (whose proof will be omitted) is analogous to the second part of Proposition \ref{p.obstpant} but in the context of graphs. We use it to guarantee that the generic leaf of $\cM$ is a tree.

\begin{proposition}\label{p.generic_tree} Consider $\mathbb{U}=\{q_n:\Gamma_{n+1}\to\Gamma_n\}$, a tower of finite coverings of finite graphs and $\cM$ its inverse limit. If $\cM$ contains a leaf which with finite dimensional homology, then the generic leaf is a tree. 
\end{proposition}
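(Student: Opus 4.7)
The plan is to mimic Proposition \ref{p.obstpant}, replacing the role of ``accumulated by genus'' by ``carries a nontrivial cycle''. First, by the Epstein-Millett-Tischler theorem (which applies to any lamination on a compact metrizable space, its proof being purely transverse), the set of leaves of $\cM$ with trivial holonomy is residual. Assume for contradiction that the generic leaf is not a tree. Then residually many leaves contain a simple closed loop, so combining with the previous statement I can pick a generic leaf $L$ with trivial holonomy that contains a simple closed curve $\gamma$. In particular $\gamma$ has trivial holonomy germ.

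I would then apply Reeb's stability theorem (Theorem \ref{t.Reeb}) to an open neighbourhood $U$ of $\gamma$ in $L$, chosen with compact closure and small enough that $U$ deformation retracts onto $\gamma$ (so that all closed loops in $U$ are powers of $\gamma$ and hence also have trivial holonomy germ). This produces an open set $W \subset \cM$ and a homeomorphism $W \cong U \times T$ onto a trivial lamination, where $T$ is a Cantor transversal. Inside each plaque $U \times \{t\}$ lies an embedded circle $\gamma \times \{t\}$, a copy of $\gamma$.

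Finally, let $L_0$ be the leaf with $\beta_1(L_0) < \infty$ provided by the hypothesis. Since $\cM$ is minimal (being an inverse limit of finite coverings), $L_0$ is dense in $\cM$, so $L_0 \cap W$ is dense in $W$. Hence $L_0 \cap W$ is an infinite disjoint union of plaques $U \times \{t_i\}$ with $\{t_i\} \subset T$ dense, each carrying the embedded circle $\gamma \times \{t_i\} \subset L_0$. These circles are pairwise disjoint in $L_0$, since they sit in distinct plaques of the trivial lamination on $W$. As disjoint cycles in a graph have disjoint edge supports, they span an infinite-dimensional subspace of $H_1(L_0, \R)$, contradicting $\beta_1(L_0) < \infty$. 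The main thing to verify carefully is that the Epstein-Millett-Tischler theorem and Reeb's stability are available in the graph-laminated setting; since both statements are about the transverse structure and do not use the local dimension of leaves, this is a formal adaptation.
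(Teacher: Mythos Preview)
Your proposal is correct and is precisely the argument the paper intends: the paper omits the proof but explicitly says it is ``analogous to the second part of Proposition \ref{p.obstpant}'', which is exactly the template you follow. If anything the graph case is cleaner than the surface case, since you need no analogue of geodesic persistence and since pairwise disjoint embedded cycles in a graph have disjoint edge supports and are therefore automatically independent in $H_1$, which is the point you use to derive the contradiction.
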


\subsection{From graphs to surfaces}\label{ss.fromgtos}
In this section we show how to produce surface laminations from graph laminations. Intuitively we thicken the graphs and then remove the interior of the thickening. 
\paragraph{\textbf{Pinching maps }} Given a graph $\Gamma$, we say that $e\subseteq\Gamma$ is an \emph{open edge} if it is a connected components of $\Gamma\setminus V(\Gamma)$. We say that a map $f:S\to \Gamma$ between a surface $S$ and a graph $\Gamma$ is a \emph{pinching map} if it satisfies the two following properties
\begin{itemize}
\item $f^{-1}(v)$ is homeomorphic to a $n$-holed sphere with boundary for every vertex of $\Gamma$, where $n$ is the valency of $v$.
\item  $f^{-1}(e)$ is an open cylinder for every open edge $e\subseteq\Gamma$.
\end{itemize}
It is straightforward to check that if $S$ and $\Gamma$ are non-compact and $f:S\to \Gamma$ is a pinching map then, $S$ and $\Gamma$ have equivalent classifying triples.

\begin{figure}[h!]
\centering
\includegraphics[scale=0.13]{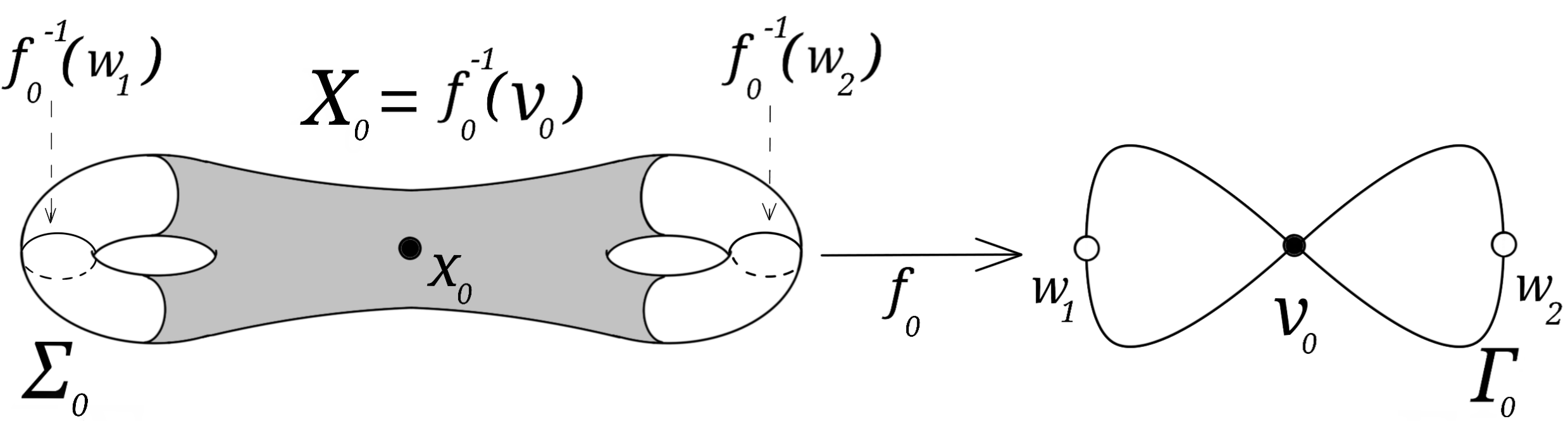}
\caption{A pinching map.}\label{fig:pinchouille}
\end{figure}

Consider now $f_0:(\Sigma_0,x_0)\to(\Gamma_0,v_0)$ a pinching map between a closed surface of genus $2$ and the wedge of two circles at the vertex $v_0$, that we call the figure eight graph (see Figure \ref{fig:pinchouille}). Note that $\pi_1(\Gamma_0,v_0)$ is a free group on two generators, denoted by $\F_2$. Denote $H=\Ker(f_0)_\ast\subseteq \pi_1(\Sigma_0,x_0)$ and $\widehat{\Sigma}_0$ the covering space of $\Sigma_0$ associated to $H$. Finally let $\widetilde\Gamma_0$ denote the universal cover of $\Gamma_0$ and $\rho_\ast$, $\rho$ denote the $\mathbb{F}_2$-actions by deck transformations in $\widehat\Sigma_0$ and $\widetilde\Gamma_0$ respectively. 

Note that, $f_0$ lifts to an $\mathbb{F}_2$-equivariant pinching map $$\overline f_0:\widehat\Sigma_0\to \widetilde\Gamma_0$$  (here we identify $\pi_1(\Sigma_0,x_0)/H$ with $\pi_1(\Gamma_0,v_0)\simeq\F_2$.)

\paragraph{\textbf{From graph laminations to surface laminations}} Let $\mathbb{U}=\{q_n:\Gamma_{n+1}\to\Gamma_n\}$ be a tower of finite coverings over $\Gamma_0$ and $\mathcal{M}$ be its inverse limit. Recall that the projection on the $0$-coordinate $\Pi_0:\mathcal{M}\to \Gamma_0$ is a Cantor-bundle. We let $\fhi:\mathbb{F}_2\to \text{Homeo}(K)$ denote holonomy representation so $\cM$ is equivalent to the suspension of $\fhi$, that is, as we recall, the quotient of $\widetilde\Gamma_0\times K$ under the diagonal action given by $\phi=\rho\times\fhi$. Then, the following holds:

\begin{proposition}[From graphs to surfaces]\label{p.from_graphs_surfaces}
There exists a surface-laminated Cantor-bundle $\Pi:\cL\to\Sigma_0$ and a continuous map $\psi:\cL\to\cM$ satisfying
\begin{itemize}
\item $\Pi\circ\psi=\Pi_\ast$ and $\psi$ is a fiberwise homeomorphism;
\item $\psi$ sends leaves of $\cL$ to leaves of $\cM$, and induces a bijection between the corresponding sets of leaves;
\item $\psi$ conjugates the holonomy representations of $\cL$ and $\cM$; in particular $\cL$ is minimal;
\item the restriction of $\psi$ to every leaf of $\cL$ is a pinching map, so it preserves classifying triples.
\end{itemize}
\end{proposition}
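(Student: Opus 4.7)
The plan is to construct $\cL$ as a suspension and to build $\psi$ from the equivariant pinching map $\overline{f}_0$. Set $\Phi=\fhi\circ (f_0)_\ast:\pi_1(\Sigma_0,x_0)\to\Homeo(K)$, and define $\cL$ as the suspension of $\Phi$, that is, the quotient of $\widetilde{\Sigma}_0\times K$ by the diagonal action of $\pi_1(\Sigma_0)$ given by $(\rho,\Phi)$. Since $H=\Ker(f_0)_\ast$ lies in the kernel of $\Phi$, this action factors through $\pi_1(\Sigma_0)/H\simeq \F_2$, so I would equivalently realize $\cL$ as the quotient
\[
\cL\;=\;\bigl(\widehat{\Sigma}_0\times K\bigr)\big/\F_2,
\]
where $\F_2$ acts diagonally through $(\rho_\ast,\fhi)$. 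By the discussion on laminated bundles in Section~\ref{s.prelim}, this gives a Cantor-bundle $\Pi:\cL\to\Sigma_0$ with a transverse surface lamination whose holonomy representation is $\Phi$. Because the holonomy action of $\Phi$ on $K$ has the same orbits as that of $\fhi$, minimality of $\cM$ carries over to $\cL$.

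Next I would define $\psi$. The map $\overline{f}_0\times\Id_K:\widehat{\Sigma}_0\times K\to\widetilde{\Gamma}_0\times K$ is $\F_2$-equivariant by construction (since $\overline{f}_0$ is equivariant with the same $\F_2$-identification coming from $(f_0)_\ast$), and it commutes with the projections to the first factor. Passing to quotients yields a continuous map $\psi:\cL\to\cM$ that fits into a commutative diagram over $f_0:\Sigma_0\to\Gamma_0$, namely $\Pi_0\circ\psi=f_0\circ\Pi$. Since $\overline{f}_0\times\Id_K$ is the identity on the Cantor factor, $\psi$ restricts to a homeomorphism on each fiber of $\Pi$ onto the corresponding fiber of $\Pi_0$. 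Moreover, leaves of $\cL$ are the images of slices $\widehat{\Sigma}_0\times\{t\}$ (quotiented by the $\F_2$-stabilizer of $t$), and similarly for $\cM$ with $\widetilde{\Gamma}_0\times\{t\}$; since $\overline{f}_0$ sends $\widehat{\Sigma}_0$ to $\widetilde{\Gamma}_0$, $\psi$ sends leaves to leaves and induces a bijection on the sets of leaves parametrized by the $\F_2$-orbits in $K$. The conjugation of holonomies is built into the construction: the holonomy of $\cL$ is $\Phi=\fhi\circ(f_0)_\ast$, and $\psi$ realizes this identification at the level of fibers.

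The step I expect to require the most care is the last bullet: checking that the restriction of $\psi$ to a single leaf $L$ of $\cL$ is a pinching map. The leaf $L$ is the quotient $\widehat{\Sigma}_0/\stab{\F_2}{t}$ for some $t\in K$, and $\psi|_L$ is induced by $\overline{f}_0$ passing to the quotient by $\stab{\F_2}{t}$. Because $\overline{f}_0$ is itself a pinching map (being an equivariant lift of $f_0$) and because the subgroup $\stab{\F_2}{t}$ acts freely and properly discontinuously on both $\widehat{\Sigma}_0$ and $\widetilde{\Gamma}_0$ preserving the preimage stratification of vertices and open edges, the quotient map inherits the two defining properties of a pinching map: preimages of vertices become disjoint unions of $n$-holed spheres and preimages of open edges become open cylinders. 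The claim that $\psi$ preserves classifying triples on each leaf then follows from the observation made in~\S\ref{ss.fromgtos} that pinching maps between non-compact spaces induce equivalent classifying triples. Once this is in place, all four bullets are verified and the proposition is proved.
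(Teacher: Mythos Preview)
Your proposal is correct and follows essentially the same approach as the paper: both define $\cL$ as the suspension of $\fhi\circ(f_0)_\ast$, identify it with $(\widehat{\Sigma}_0\times K)/\F_2$, and obtain $\psi$ by descending the equivariant map $\overline{f}_0\times\Id_K$. Your treatment of the last bullet is slightly more detailed than the paper's (which simply invokes the definition of $\overline{f}_0$); one small imprecision is that the preimage of a single vertex in the quotient leaf is a single $n$-holed sphere, not a disjoint union, since the free $\stab{\F_2}{t}$-action permutes the vertex-preimages in $\widehat{\Sigma}_0$ transitively over each orbit.
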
 

\begin{proof} We define $\cL$ as the quotient of $\widehat\Sigma_0\times K$ under the diagonal action defined by $\phi_\ast=\rho_\ast\times \fhi$. First notice that $\cL$ is the suspension of the representation $\fhi\circ(f_0)_\ast$ so it is a laminated Cantor-fiber bundle over $\Sigma_0$. In particular $\cL$ is a compact lamination. Also, since $\mathcal{M}$ is minimal, so is the action $\fhi$ and consequently, so is $\cL$.

Consider now the map $F=\overline{f}_0\times \Id$ and notice that it is $(\phi_\ast,\phi)$-equivariant. Therefore, $F$ descends to a continuous map $\psi:\cL\to\mathcal{M}$. By construction $\psi$ induces a fiberwise homeomorphism and conjugates the holonomy representations of $\cL$
 and $\cM$, in particular it induces a bijection between the corresponding sets of leaves. Finally, by definition of $\overline{f}_0$, the restriction of $\psi$ to each leaf of $\cL$ is a pinching map onto its image.
\end{proof}

Therefore, the proof of Theorem \ref{maintheorouille} reduces to that of:
\begin{theorem}\label{t.grafos} There exists a tower of finite coverings $\U=\{q_n:\Gamma_{n+1}\to\Gamma_n\}$ over the figure eight graph $\Gamma_0$, whose inverse limit $\cM$ satisfies
\begin{enumerate}
\item\label{condi1} its generic leaf is a tree;
\item\label{condi2} given any classifying triple $\tau$ satisfying condition $(\ast)$, there exists a leaf of $\cM$ whose classifying triple is equivalent to $\tau$.
\end{enumerate}
\end{theorem}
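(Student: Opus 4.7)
The plan is to build the tower $\U$ inductively, where each finite covering $q_n : \Gamma_{n+1}\to\Gamma_n$ is chosen so that the emerging inverse limit $\cM$ is closer to having all the required leaves. Since any leaf of $\cM$ is a connected covering of the figure-eight graph $\Gamma_0$, its local structure is forced to be $4$-valent, so we cannot prescribe leaves as arbitrary graphs --- only their large-scale topological invariants (their classifying triples) are controllable. This motivates working with a relaxed notion of \emph{$C$-graph}, which records only the data needed to reconstruct a classifying triple, while leaving local combinatorics free to be adjusted during the inductive construction.

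Condition (\ref{condi1}) will be a byproduct of the general construction: I would arrange things so that at least one leaf of $\cM$ has zero first Betti number, i.e.\ is a tree (for instance, by retaining throughout the construction a ``trivial branch'' of base-points whose preimages trace out an infinite tree). Proposition \ref{p.generic_tree} then immediately forces the generic leaf to be a tree. The substantive part is condition (\ref{condi2}). For this, the strategy is to produce a countable family $\cF$ of $C$-graphs --- a \emph{forest} --- such that every classifying triple satisfying condition $(\ast)$ is equivalent to the classifying triple of some member of $\cF$, and then embed $\cF$ simultaneously inside the tower so that every $C$-graph in $\cF$ materializes as a leaf of $\cM$. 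At step $n$ of the construction, $\Gamma_{n+1}\to\Gamma_n$ should be obtained from $\Gamma_n$ by a controlled \emph{surgery} (Section \ref{s.surgeries}): one adds loops, which eventually produce homology accumulating on prescribed ends of the future leaves, together with branchings, which produce the desired end spaces. The main embedding result Lemma \ref{l.main} should then guarantee that any forest satisfying appropriate combinatorial hypotheses can be realized inside such a tower.

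The main obstacle is the construction of a forest $\cF$ simultaneously realizing every admissible classifying triple while being uniform enough to satisfy the hypotheses of Lemma \ref{l.main}; this is the content of Proposition \ref{p.forestuniv}. The difficulty is that the family of classifying triples with condition $(\ast)$ is very large --- the end space $\cE$ may be an arbitrary nonempty closed subset of a Cantor set, and $\cE_0\dans\cE$ is essentially arbitrary subject to $(\ast)$ --- whereas the embeddability constraints coming from the tower structure of $\U$ are quite restrictive. Working with $C$-graphs rather than rigid graphs provides precisely the flexibility needed to interpolate between these two demands: we retain enough combinatorial freedom to attach branches and loops during the surgery phase, while still pinning down the classifying triple of each future leaf. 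Granting Proposition \ref{p.forestuniv}, the proof of Theorem \ref{t.grafos} reduces to feeding its forest into Lemma \ref{l.main} and reading off that the leaves of the resulting inverse limit cover all prescribed classifying triples.
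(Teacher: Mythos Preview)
Your outline matches the paper's approach closely, and the role you assign to $C$-graphs, the forest of Proposition~\ref{p.forestuniv}, and the Main Lemma~\ref{l.main} is correct. There is, however, one genuine gap.

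You assert that the forest $\cF$ coming from Proposition~\ref{p.forestuniv} realizes \emph{every classifying triple} satisfying condition~$(\ast)$. It does not: because $h$-vertices stand for ``some unspecified positive amount of homology'', a $C$-graph with $h$-vertices has a well-defined ends pair $(\cE_0,\cE)$ but \emph{not} a well-defined first Betti number. Proposition~\ref{p.forestuniv} therefore only promises that every \emph{ends pair} satisfying condition~$(\ast)$ occurs among the limits $H^\alpha$, and feeding $\cF$ into Lemma~\ref{l.main} (via Proposition~\ref{p.include}) yields leaves realizing all such ends pairs --- equivalently, all classifying triples with $g=\infty$. The triples $(g,\emptyset,K)$ with $g$ finite and $K$ a Cantor set (these are exactly the admissible triples with $\cE_0=\emptyset$, by condition~$(\ast)$) are \emph{not} captured by this mechanism. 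Your ``trivial branch'' idea handles $g=0$ and hence condition~(\ref{condi1}) via Proposition~\ref{p.generic_tree}, but the cases $g=1,2,3,\ldots$ remain.

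The paper closes this gap with a separate, parallel construction (Proposition~\ref{p.include_finite_homo}): alongside the forest $\cF$ one carries, at each level $n$ of the tower, an extra family of $C$-subgraphs $G_{k,n}$ with $\beta_1(G_{k,n})=k$ for $1\le k\le n$, together with $(1{:}1)$-lifts $j_{k,n}:G_{k,n}\to G_{k,n+1}\cong T(G_{k,n},1)$. The direct limit along each such chain is the graph $T(G_{k,k})$, which has Betti number exactly $k$ and a Cantor set of ends, and Proposition~\ref{p.iso} identifies it with a leaf of $\cM$. The inductive step interleaves this with the realization of $\cF$ using Lemma~\ref{l.variationbblock} (a variant of the elementary-realization Lemma~\ref{l.bblock}). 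You should add this second ingredient to your plan; without it condition~(\ref{condi2}) is not fully verified.
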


\subsection{Strategy of the proof of Theorem \ref{t.grafos}}\label{ss.strategy} 
Consider a tower of finite coverings  of finite graphs $\mathbb{U}=\{q_n:\Gamma_{n+1}\to\Gamma_n\}$ and denote its inverse limit by $\cM$. In order to obtain Theorem \ref{t.grafos} we must answer several questions. How to recognize the topological type of a leaf of $\cM$? How to construct a leaf with prescribed classifying triple? How to make sure that all classifying triples satisfying condition $(\ast)$ are realized by leaves of $\cM$? 

\paragraph{\textbf{Recognizing the topology of a leaf}} The first tool we need in order to study the topology of the leaves is the concept of \emph{direct limit} of sequences of graph inclusions (see the definition given in \S \ref{ss.limits_forest}). More precisely assume that there exist
\begin{itemize}
\item a sequence of subgraphs $G_n\subseteq\Gamma_n$, and 
\item a sequence of $(1:1)$-lifts $j_n:G_n\to G_{n+1}$ satisfying $j_n(G_n)\subseteq \Int(G_{n+1})$.
\end{itemize}
We then say that the chain of inclusions $\{j_n:G_n\to G_{n+1}\}$ in \emph{included inside} the tower. We can prove (this is done in Proposition \ref{p.iso}) that in this case the direct limit of the chain $\{j_n:G_n\to G_{n+1}\}$ is isomorphic to a leaf of $\mathcal{M}$. 

So our strategy consists in realizing all classifying triples satisfying condition $(\ast)$ in graphs obtained as direct limits of chains $\{j_n:G_n\to G_{n+1}\}$, and then to include such chains inside a tower of finite coverings of the figure eight graph, as defined above.

\paragraph{\textbf{Constructing a leaf with prescribed classifying triple}} To include such a chain inside a tower of finite coverings requires to control the topology of graphs $G_n$ so as to prescribe that of the direct limit. There are combinatorial constraints to do so, and it would be too tedious to know exactly which chain can be included inside a tower, and how to perform the inclusion of a given chain. This is one of the reasons for providing our graphs with a \emph{decoration}, i.e. associate different types to the vertices. We define \emph{$C$-graphs}, which represent usual graphs up to some local information that does affect the \emph{asymptotic invariants} we are interested in (ends, ends accumulated by homology, etc.). Therefore, prescribing a chain of $C$-graphs is equivalent to prescribing a chain of usual graphs up to some local invariants, which will make much easier its inclusion inside a tower of finite coverings. These $C$-graphs are related to classical graphs via an operation of \emph{collapsing} which takes some finite connected subgraphs with homology into a new type of vertices called $h$-vertices. 

The tower is built inductively. And the induction step, i.e. the  construction of the covering $q_n:\Gamma_{n+1}\to\Gamma_n$ requires to stabilize the topology of the subgraph $G_n$ (this graph must be lifted to $\Gamma_{n+1}$). This is done by an operation of \emph{surgery} of coverings. The formalism of $C$-graphs is also well suited for this surgery operation.

\paragraph{\textbf{Realizing all classifying triples}} In order to realize all the classifying triples, we shall include several chains of $C$-graphs inclusions inside a tower of finite coverings. We can see such a chain $\{j_n:G_n\to G_{n+1}\}$ as a ray in some arborescent structure called a \emph{forest} (a disjoint union of trees), whose ends will provide infinite graphs with the desired classifying triples.

So in order to realize all classifying triples simultaneously, we have to generalize the concept of inclusion of a single chain inside a tower, to that of \emph{the inclusion of a whole forest of graphs inside a tower of finite coverings}. This is done in \S \ref{ss.forests_inclusion_realization}. 

Finally, we have to make sure that the ends of those forests that we construct represent (almost) all possible classifying triples of infinite graphs: this is the purpose of Section \ref{s.forestuniv}. Actually, we will see that our formalism of forest and $C$-graphs forces us to treat separately the case of infinite  graphs with finite dimensional homology, and that of graphs with infinite dimensional homology. Finally, the fact that generic leaves of the lamination are trees will be deduced from Proposition \ref{p.generic_tree} and the fact that the constructed laminations contains leaves with finite dimensional homology.

\section{$C$-graphs}\label{s.decorouille}

As we explained above, it will be convenient to decorate our graphs in order to perform the two key operations in the proof of Theorem \ref{t.grafos}: \emph{collapses and surgeries}. We develop in this section the formal framework of $C$-graphs. These graphs possess special vertices called $h$-vertices which represent finite and connected subgraphs with positive first Betti number which are related to the operation of collapse as we shall see. There is another type of vertices, called boundary vertices, that will be useful in the treatment of subgraphs and surgeries. There are also some restrictions on the valencies of different types of vertices of $C$-graphs whose necessity will become clear in Section \ref{s.surgeries}.

\subsection{Definition of $C$-graphs}\label{s.def_cgraph}
We say that $\Gamma$ is a $C$-\emph{graph} ($C$ stands for \emph{collapse}) if it is a graph with three types of vertices:
\begin{itemize}
\item \emph{boundary vertices}, that may have valency $1$ or $2$;
\item \emph{simple vertices}, that have valency $4$; and 
\item \emph{homology vertices}, that may have valency $2$ or $4$. 
\end{itemize}
Moreover, we ask edges to join vertices of different types one of them being a boundary vertex. We refer to these vertices as $b,s$ or $h$-vertices. Sometimes we add an index to specify their valencies, that is we are going to have $b_1,b_2,s_4,h_2$ and $h_4$-vertices. 

In Figure \ref{fig:decoratouille} we see the figure eight $(\Gamma,v)$ with one $s$-vertex, two $b_2$-vertices and four edges. We call this graph, the \emph{figure eight $C$-graph}. Consider $$\mathbb{U}=\{q_n:\Gamma_{n+1}\to\Gamma_n\}$$ a tower of finite coverings with $\Gamma_0$ a finite covering space of the figure eight $C$-graph. Then, we say that $\mathbb{U}$ is a \emph{tower}.

\begin{figure}[h!]
\centering
\includegraphics[scale=0.05]{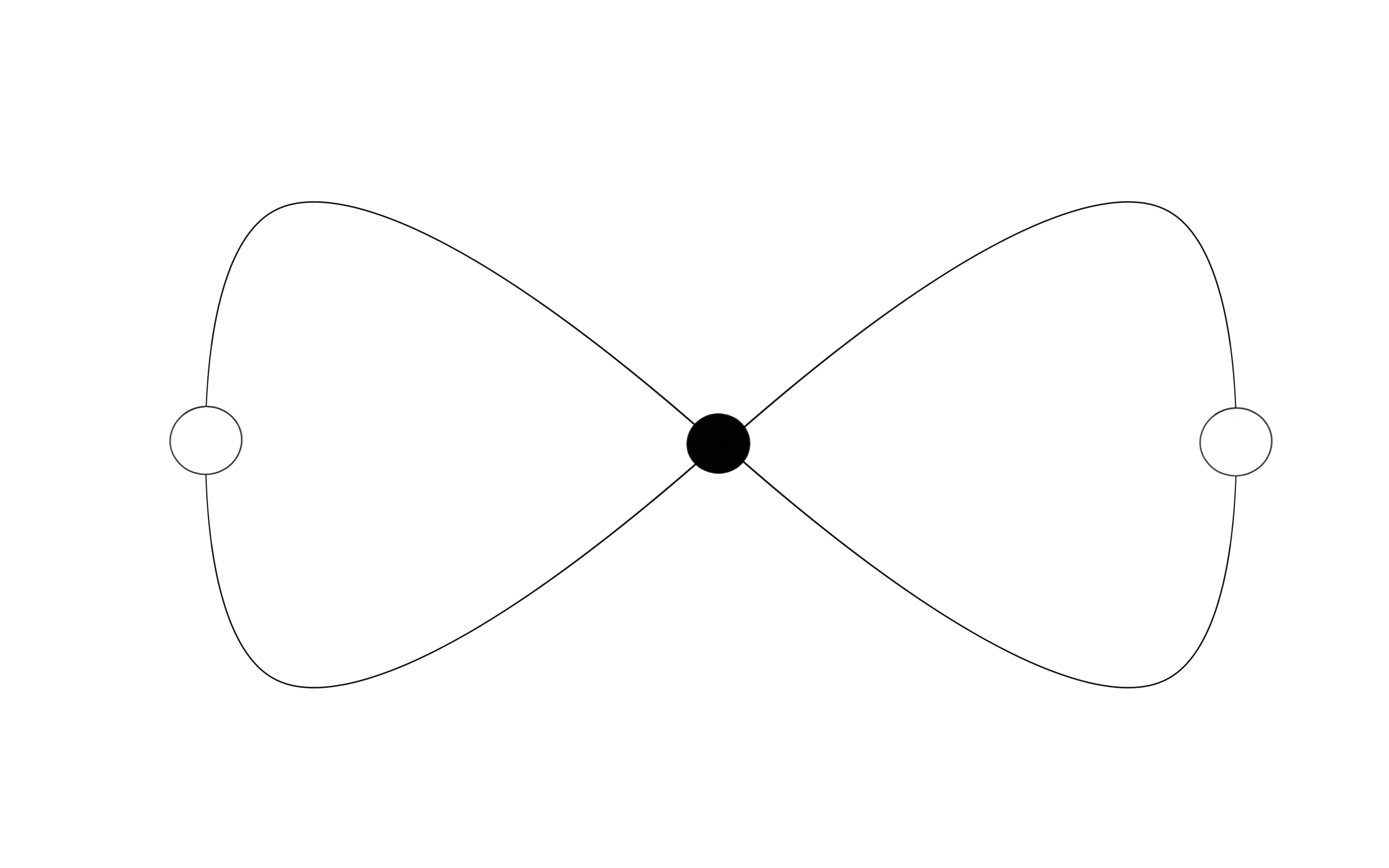}
\caption{The  figure eight $C$-graph with one $s$-vertex and two $b_2$-vertices. In that paper, all $s$-vertices will be represented in black and all $b$-vertices will be represented in white.}\label{fig:decoratouille}
\end{figure}

Given a $C$-graph $G$, denote $\text{dist}_{G}$ the path metric in $G$ where all the edges have length $1$ and $B_G(v,r):=\{x\in G:\text{dist}_G(v,x)\leq r\}$. We define the \emph{boundary} of $\Gamma$ as the set $$\partial\Gamma:=\{v\in\Gamma:v\text{ is a vertex or type } b_1\}$$ and the \emph{interior} of $\Gamma$ as $\text{Int}(\Gamma):=\Gamma\setminus\partial\Gamma$. 

It is practical for the construction of Theorem \ref{t.grafos} that vertices in the topological boundaries of subgraphs have valency-$1$, both in the subgraph and in its complement. This is the reason for introducing boundary vertices and for the next definition: we say that a subgraph $S$ of $G$ is a \emph{$C$-subgraph} if for each vertex $v\in S$ which is not of boundary type, it holds that $B_G(v,1)\subseteq S$. Note that $C$-subgraphs are naturally $C$-graphs.

\subsection{$C$-graphs and ends spaces.} In that paper, we think $h$-vertices as vertices with non-trivial homology. But we do not assign a particular Betti number to such vertices. Therefore $C$-graphs with finite-dimensional homology and finitely many $h$-vertices have undetermined Betti number. For this reason, when working with $C$-graphs with $h$-vertices we use ends pairs instead of classifying triples. 

We define the ends space of a $C$-graph as the ends space of its underlying graph (recall that $h$-vertices represent finite and connected subgraphs). On the other hand, since $h$-vertices represent subgraphs with positive Betti number we say that a vertex $\alpha\in\cE(G)$, represented by a decreasing sequence of subgraphs $(\mathcal{C}_n)_{n\in\N}$, belongs to $\cE_0(G)$ if every $\mathcal{C}_i$ either has non-trivial homology or contains a vertex of $h$-type. We call $\cE_0(G)$ the \emph{space of ends of $G$ accumulated by homology}.

It worth mentioning that $\cE_0(G)$ is closed in $\cE(G)$ and its definition does not depend on the choice of the sequence $(\mathcal{C}_i)_{i\in\N}$. Given a $C$-graph $G$ we define its \emph{pair of ends} as the pair $(\cE_0(G),\cE(G))$. Finally, we say that two pairs of ends $(\cE_0(G),\cE(G))$ and $(\cE_0(H),\cE(H))$ are \emph{equivalent} if there exists an homeomorphism $h:\cE(G)\to\cE(H)$ satisfying $h(\cE_0(G))=\cE_0(H)$. 

\begin{remark}\label{r.star} Note that if a $C$-graph $\Gamma$ has an isolated end $\alpha$ which is not accumulated by homology in the classical sense (i.e. it contains a neighbourhood in $\Gamma$ which is topologically a tree) then, it must have a neighbourhood containing only vertices of type $b_2$ and $h_2$. Therefore, ends pairs of $C$-graphs always satisfy condition $(\ast)$. 
\end{remark}

\paragraph{\textbf{Examples of $C$-graphs}} The next two pictures illustrate important examples of $C$-graphs that have finitely many ends and that we will use during the proof of Theorem \ref{t.grafos}. Each end of these graphs is accumulated by vertices of $h$-type. We first illustrate $C$-graphs with an even number of ends in Figure \ref{fig:evouille} below. They are topologically trees and all ends are accumulated by vertices of type $h_2$ (in all figures, the subscripts of $h$-type vertices will be omitted, as it only reflects their valencies, which will be clear from the pictures): this illustrates Remark \ref{r.star}.

\begin{figure}[h!]
\centering
\includegraphics[scale=0.11]{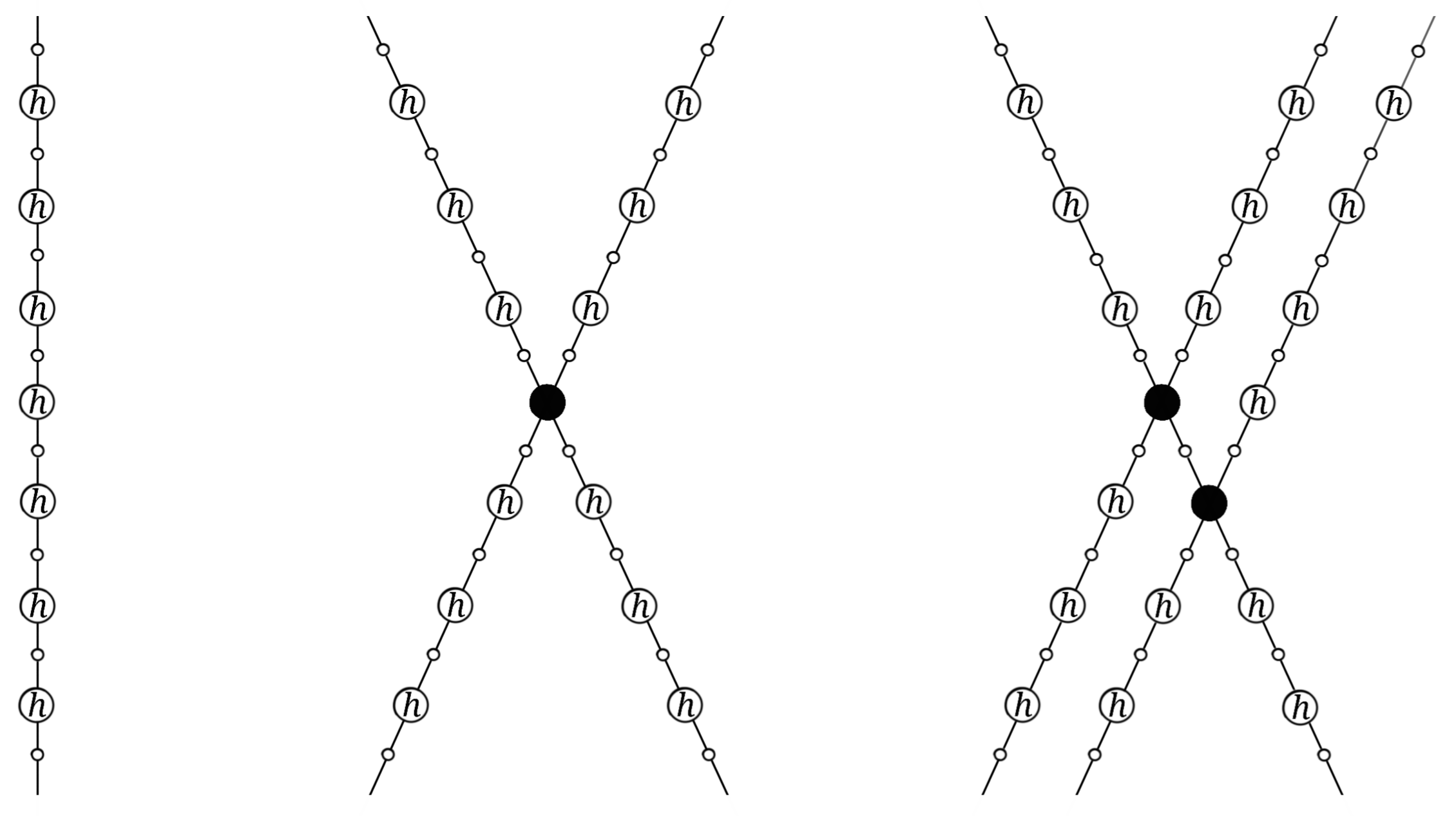}
\caption{$C$ graphs with respectively $2$, $4$ and $6$ ends. Proceeding as suggested in the figure, one obtains $C$-graphs with all even numbers of ends.}\label{fig:evouille}
\end{figure}

Note that if a $C$-graph with finitely many ends is topologically a tree then, since the valency of each vertex equals $2$ or $4$, the number of ends must be even. Hence, to realize $C$-graphs with an odd number of ends, we need to use vertices of type $h_4$, as pictured in Figure \ref{fig:forouille}. Representing $C$-graphs with an odd number of ends is the only reason why we consider vertices of type $h_4$.

\begin{figure}[h!]
\centering
\includegraphics[scale=0.11]{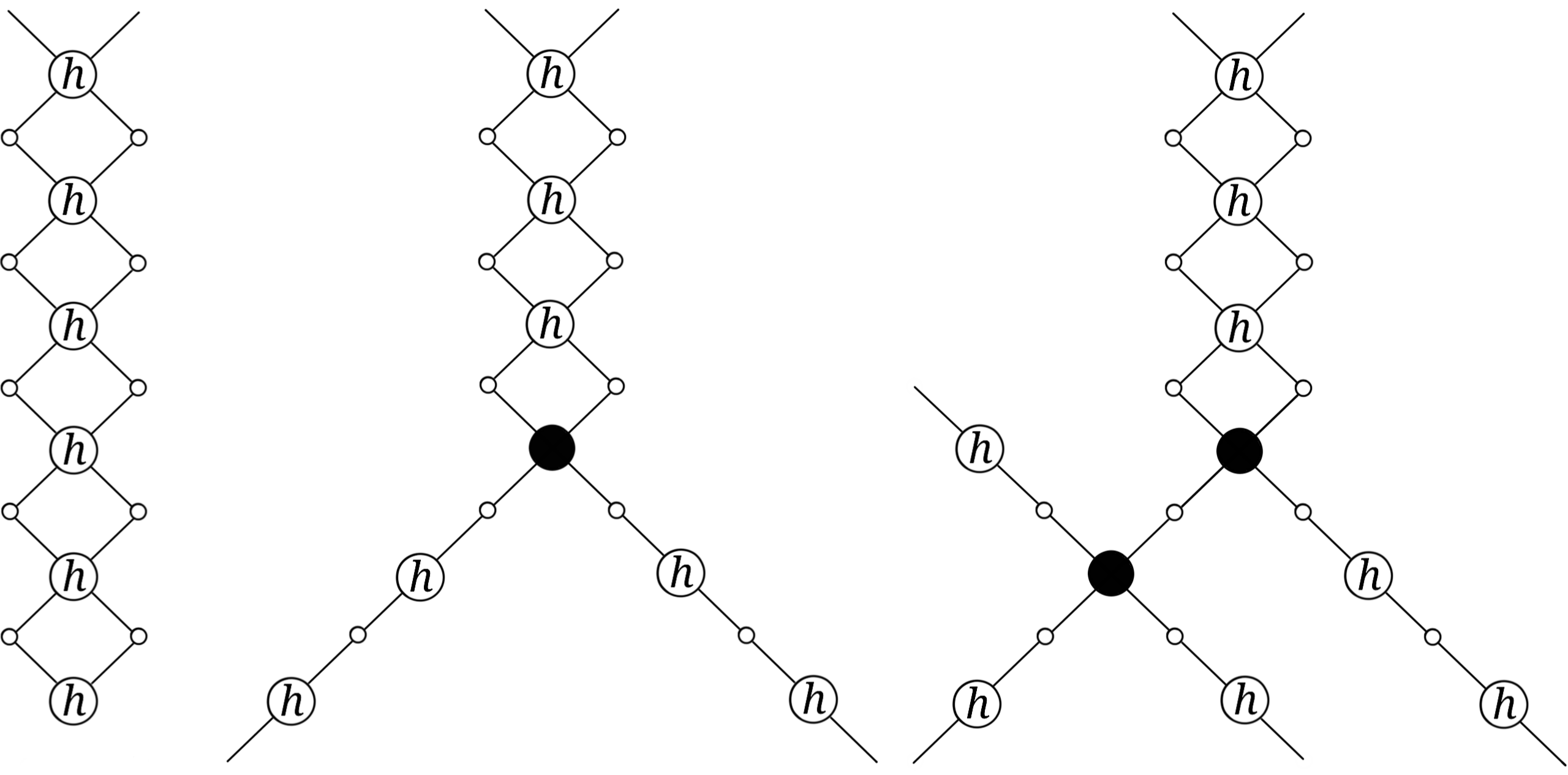}
\caption{$C$-graphs with respectively 1,3 and 5 ends. Proceeding as suggested in the figure, one obtains $C$-graphs with all odd numbers of ends. Any such graph has a special end accumulated by $h_4$-vertices.}\label{fig:forouille}
\end{figure}

\subsection{Collapses.}\label{ss.collapses}
If $G$ is a graph and $\mathcal{F}$ a countable family of finite and connected subgraphs of $G$, let $G/{\mathcal{F}}$ denote the quotient of $G$ under the equivalence relation ``being on the same subgraph of $\mathcal{F}$''. Notice that $G/{\mathcal{F}}$ has a natural graph structure. 

Consider $C$-graphs $G$ and $H$. Assume that $G$ contains no $h$-vertices and denote $H_{\ast}$ the set of $h$-vertices of $H$. We say that  a map $f:G\to H$ is a \emph{collapse} if it is a graph morphism and there exist: \begin{itemize}
\item $\mathcal{F}$ a disjoint family of finite, connected and homologically non-trivial subgraphs of $G$ contained in $\text{Int}(G)$ and
\item $\hat f:G/{\mathcal{F}}\to H$ a graph isomorphism such that:
\begin{enumerate}
\item $\hat f\circ \pi=f$ where $\pi:G\to G/{\mathcal{F}}$ is the quotient map;
\item\label{a1}$f$ induces a bijection between $\mathcal{F}$ and $H_{\ast}$;
\item\label{a2} $f$ preserve vertex types when restricted to $G\setminus \cup_{S\in\mathcal{F}}S$.
\end{enumerate}
\end{itemize}

\begin{figure}[h!]
\centering
\includegraphics[scale=0.13]{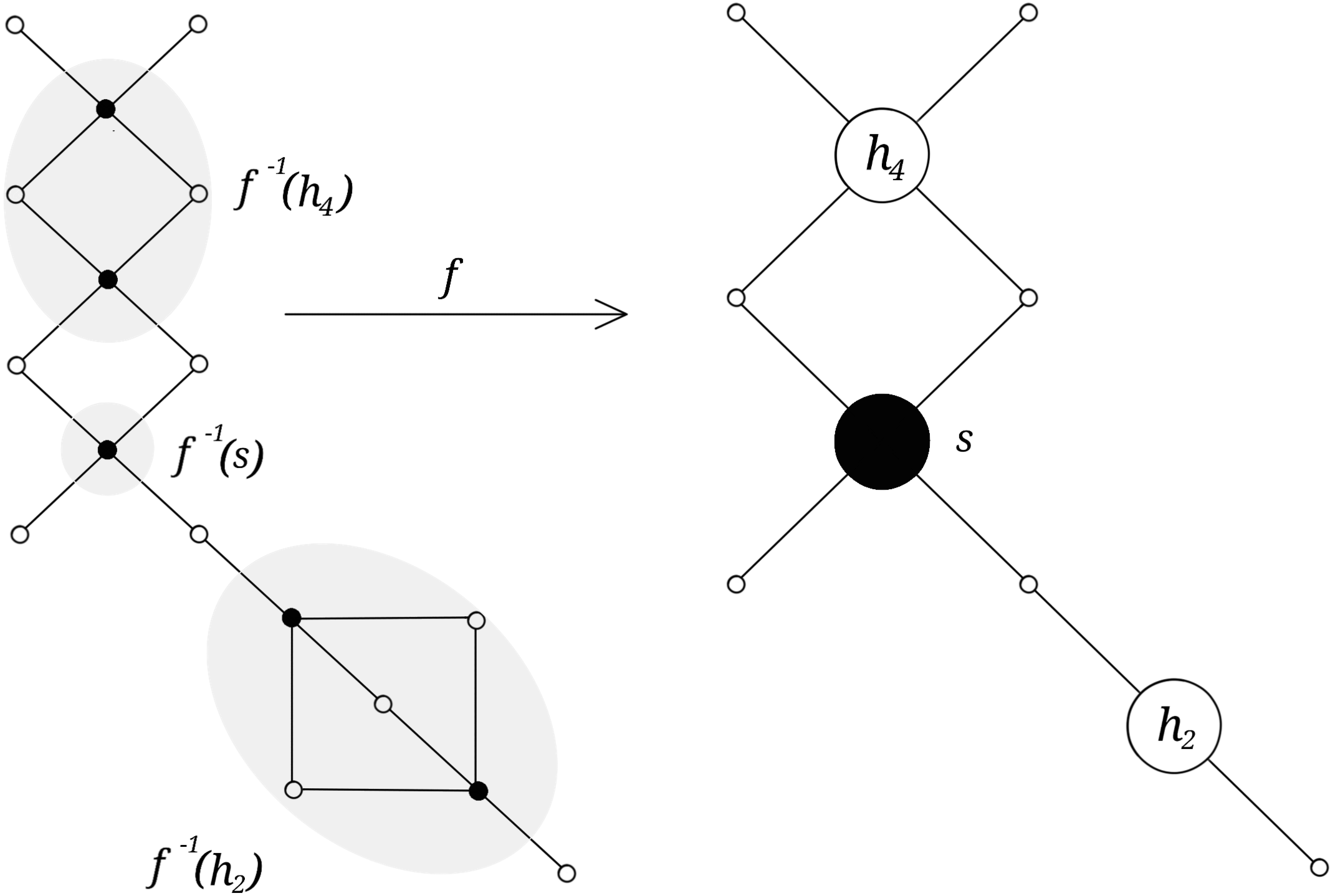}
\caption{A collapse. Each shaded subgraph of the graph on the left is collapsed onto a vertex of the graph on the right.}\label{fig:coditou}
\end{figure}  

\begin{remark}\label{r.proper} Notice that, since the subgraphs in the family $\mathcal{F}$ are finite, collapses are proper maps. 
Also, conditions \eqref{a1} and \eqref{a2} imply that preimages of $b_i$ vertices under collapses are $b_i$ vertices for any valency $i=1,2$. 
\end{remark}

\subsection{Collapses and end spaces.}\label{ss.collapses and triples} The following proposition shows that although collapses forget some topological information, they preserve ends pairs.

\begin{proposition}\label{p.codingends} Consider infinite $C$-graphs $G,H$ and a collapse $f: G\to H$. Then, the ends pairs $(\cE_0(G),\cE(G))$ and $(\cE_0(H),\cE(H))$ are equivalent.
\end{proposition}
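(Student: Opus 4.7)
The plan is to use the factorization $f=\hat{f}\circ\pi$ built into the definition of a collapse. Since $\hat{f}:G/\cF\to H$ is a graph isomorphism that, by conditions (a1) and (a2), sends $h$-vertices to $h$-vertices and preserves the types of all other vertices, it will automatically induce an equivalence of pairs of ends between $G/\cF$ and $H$. So I will reduce the whole statement to showing that the quotient map $\pi:G\to G/\cF$ induces such an equivalence.

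First I would establish the basic topological behaviour of $\pi$. Because $G$ is locally finite and $\cF$ is a disjoint family of finite subgraphs, every finite subgraph of $G$ meets only finitely many elements of $\cF$, so $\pi$ is a proper map. Fixing any finite exhaustion $\bar{K}_n$ of $G/\cF$ and setting $K_n:=\pi^{-1}(\bar{K}_n)$ yields a finite exhaustion of $G$. Every fiber of $\pi$ is connected (either a point or some $S\in\cF$), and every $S\in\cF$ lies either entirely inside $K_n$ or entirely inside a single component of $G\setminus K_n$. From this I would derive a bijection between components of $G\setminus K_n$ and components of $(G/\cF)\setminus\bar{K}_n$, given by $C\mapsto\pi(C)$ with inverse $D\mapsto\pi^{-1}(D)$; passing to the inverse limit, $\pi$ induces a homeomorphism $\pi_{*}:\cE(G)\to\cE(G/\cF)$.

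To finish I would take $e\in\cE(G)$ represented by $(C_n)$ and set $D_n:=\pi(C_n)$, $\cF_n:=\{S\in\cF:S\subseteq C_n\}$. From the long exact sequence of the CW pair $(C_n,\bigsqcup_{S\in\cF_n}S)$ (or equivalently a direct cycle-space computation for graphs) one obtains
\[
\operatorname{rk}H_1(D_n)=\operatorname{rk}H_1(C_n)-\sum_{S\in\cF_n}\operatorname{rk}H_1(S).
\]
If $e\in\cE_0(G)$ then $H_1(C_n)\neq 0$ for every $n$, since $G$ has no $h$-vertices. If some $\cF_n$ is non-empty then $D_n$ already contains an $h$-vertex; otherwise $\pi|_{C_n}$ is a homeomorphism and $H_1(D_n)=H_1(C_n)\neq 0$. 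Either way $\pi_{*}(e)\in\cE_0(G/\cF)$. Conversely, if every $D_n$ either contains an $h$-vertex (so some $S\in\cF_n$ is a homologically non-trivial subgraph of $C_n$) or has non-trivial $H_1$ (so the displayed formula forces $H_1(C_n)\neq 0$), then $e\in\cE_0(G)$.

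The main obstacle I anticipate is the bookkeeping in the second step: verifying carefully that collapsing a locally finite family of pairwise disjoint finite subgraphs preserves the space of ends, in particular that preimages of components are connected and that no spurious end is created or lost. Once this is granted the $\cE_0$ correspondence will be essentially automatic, because $h$-vertices of $G/\cF$ are, by design, a compact combinatorial stand-in for homologically non-trivial finite subgraphs of $G$.
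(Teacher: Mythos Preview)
Your proposal is correct and follows essentially the same skeleton as the paper: exploit properness of the collapse, pull back a compact exhaustion, use that fibres are connected to get a bijection between complementary components, and then match up $\cE_0$ via the correspondence between $h$-vertices and the collapsed subgraphs.

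The differences are organizational and in one technical step. You factor $f=\hat f\circ\pi$ and reduce everything to the quotient map $\pi$; the paper works directly with $f$ and isolates the connectedness of preimages as a separate lemma (its Lemma~4.1), which plays the same role as your observation that $\pi$-fibres are connected. For the $\cE_0$ correspondence, you use the Euler-characteristic identity $\beta_1(D_n)=\beta_1(C_n)-\sum_{S\in\cF_n}\beta_1(S)$, which is clean and makes both directions immediate. The paper instead argues combinatorially: if $\beta_1(\cC_n)>0$ it finds a non-separating $b_2$-vertex $w\in\cC_n$, notes that $f^{-1}(w)$ is a single $b_2$-vertex, and uses connectedness of preimages to conclude $f^{-1}(\cC_n)\setminus\{f^{-1}(w)\}$ is connected, hence has a cycle. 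Your algebraic route is shorter; the paper's trick avoids any homological computation and stays entirely at the level of vertices and connectivity, which fits its later surgery arguments. Either way the content is the same.
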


In order to prove Proposition \ref{p.codingends} we need the following Lemma.
\begin{lemma}\label{l.connected} Consider a collapse $f:G\to H$ and a connected subgraph $H_0\subseteq H$. Then, $f^{-1}(H_0)$ is connected.
\end{lemma}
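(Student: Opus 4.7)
The natural strategy is to reduce to the quotient map $\pi \colon G \to G/\mathcal{F}$, then exploit connectedness of every member of $\mathcal{F}$ to lift paths.

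First I would reduce to the case of the quotient map. Since $f = \hat{f}\circ \pi$ with $\hat{f}$ a graph isomorphism, we have $f^{-1}(H_0) = \pi^{-1}(\hat{f}^{-1}(H_0))$, and $K := \hat{f}^{-1}(H_0)$ is a connected subgraph of $G/\mathcal{F}$. So it suffices to prove that for every connected subgraph $K \subseteq G/\mathcal{F}$, the preimage $\pi^{-1}(K)$ is connected.

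Next I would unpack the structure of $\pi^{-1}(K)$. For each vertex $\bar v \in V(K)$ set
\[
S_{\bar v} := \begin{cases} S & \text{if } \bar v = \pi(S) \text{ for some } S \in \mathcal{F},\\ \{\tilde v\} & \text{if } \bar v \text{ lifts to a single vertex } \tilde v \in G.\end{cases}
\]
Each $S_{\bar v}$ is connected (every $S \in \mathcal{F}$ is connected by definition of a collapse) and is entirely contained in $\pi^{-1}(K)$. Moreover, because the edges of $G/\mathcal{F}$ are in bijection with the edges of $G$ not contained in any $S \in \mathcal{F}$, every edge $\bar e = [\bar u, \bar v]$ of $K$ has a unique lift $\tilde e \subseteq G$, with endpoints lying in $S_{\bar u}$ and $S_{\bar v}$ respectively; this lift sits inside $\pi^{-1}(K)$.

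Finally I would connect two arbitrary points $x, y \in \pi^{-1}(K)$. After moving $x, y$ along the interior of an edge if necessary, I may assume they are vertices. Their images $\bar x, \bar y \in K$ are joined by a combinatorial path $\bar c = (\bar v_0, \bar v_1, \ldots, \bar v_n)$ in $K$, with $\bar v_0 = \bar x$, $\bar v_n = \bar y$. I build a path in $\pi^{-1}(K)$ by induction on $i$: starting at $x \in S_{\bar v_0}$, I travel inside $S_{\bar v_0}$ to the endpoint of $\tilde e_1$ lying in $S_{\bar v_0}$ (possible since $S_{\bar v_0}$ is connected), then cross $\tilde e_1$ into $S_{\bar v_1}$, travel inside $S_{\bar v_1}$ to the endpoint of $\tilde e_2$ in $S_{\bar v_1}$, and so on until I land in $S_{\bar v_n}$, where I move to $y$. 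All steps live in $\pi^{-1}(K)$, proving connectedness.

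The only mildly delicate point is the bookkeeping ensuring that each lifted edge $\tilde e_i$ has its two endpoints in the correct pieces $S_{\bar v_{i-1}}$ and $S_{\bar v_i}$; this is immediate from the definition of the quotient. The proof otherwise runs on pure soft general nonsense about quotients with connected fibers.
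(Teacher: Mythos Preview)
Your proof is correct and uses the same key observation as the paper --- that fibers of a collapse are connected --- but the overall structure differs. The paper argues by contradiction: decomposing $f^{-1}(H_0)$ into connected components $\mathcal{B}_n$, it notes that connectedness of $H_0$ forces two images $f(\mathcal{B}_i)$, $f(\mathcal{B}_j)$ to overlap, which is impossible since the (connected) preimage of any point must lie in a single component. Your argument is instead direct and constructive: you reduce to the quotient map $\pi$, then explicitly lift a combinatorial path in $K$ by alternately traversing the connected fibers $S_{\bar v}$ and the unique edge-lifts $\tilde e_i$. The paper's route is shorter and slicker; yours is more explicit and makes transparent exactly where connectedness of each $S\in\mathcal{F}$ is used, at the cost of some extra bookkeeping. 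Either is a perfectly standard proof of the general fact that a surjection with connected fibers pulls back connected sets to connected sets.
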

\begin{proof} Write $f^{-1}(H_0)=\bigsqcup_{n}\mathcal{B}_n$ where $\mathcal{B}_n$ are the connected components of $f^{-1}(H_0)$. Since $H_0$ is connected, there must exist $i\neq j$ such that $f(\mathcal{B}_i)\cap f(\mathcal{B}_j)\neq\emptyset$. This is absurd since by definition of collapse, pre-images of vertices are connected. 
\end{proof}

\begin{proof}[Proof of Proposition \ref{p.codingends}.] Let  $(K_n)_{n\in\N}$ be an exhaustion of $H$ by compact subgraphs. Up to modifying the sequence we can suppose that every connected component of $H\moins K_n$ is unbounded. Define $L_n:=f^{-1}(K_n)$ and notice that, since collapses are proper (see Remark \ref{r.proper}), the sequence $L_1\subseteq L_2\subseteq\ldots$ is an exhaustion of $G$ by compact subsets. Also notice that, by Lemma \ref{l.connected}, taking preimages induces a $(1:1)$ correspondence between the connected components of $H\setminus K_n$ and those of $G\setminus L_n$. 

We proceed to define the homeomorphism between $\cE(H)$ and $\cE(G)$. For this, take an end $\alpha\in\cE(H)$ defined by a decreasing sequence of subsets $(\mathcal{C}_n)_{n\in\N}$, where each $\mathcal{C}_n$ is a connected component of $H\moins K_n$. We define $\fhi(\alpha)$ as the end represented by the decreasing sequence $(\mathcal{B}_n)_{n\in\N}$ where $\mathcal{B}_n=f^{-1}(\mathcal{C}_n)$. This definition makes sense because $f^{-1}(\mathcal{C}_n)$ is a connected component of $G\setminus L_n$. It is straightforward to check that $\fhi$ is an homeomorphism. 

To prove that $\varphi(\cE_0(H))\subseteq \cE_0(G)$, take $\alpha\in\cE_0(H)$ defined by a sequence $(\mathcal{C}_n)_{n\in\N}$ and denote $\mathcal{B}_n=f^{-1}(\mathcal{C}_n)$. We need to check that $\beta_1(\mathcal{B}_n)> 0$ for every $n\in\N$. For this we distinguish two cases.

\emph{Case 1. $\beta_1(\mathcal{C}_n)\neq 0$}.\\ In this case, we can find a $b_2$-vertex $w\in\mathcal{C}_n$ such that $\mathcal{C}_n\moins\{w\}$ is connected. Since preimages of $b_2$-vertices under collapses are $b_2$-vertices, we have that $w_0:=f^{-1}(w)$ is also a $b_2$-vertex (see Remark \ref{r.proper}). On the other hand, Lemma \ref{l.connected} implies that $f^{-1}(\mathcal{C}_n\moins\{w\})$ (which equals $\mathcal{B}_n\setminus \{w_0\}$) is connected. Finally, since $w_0$ has valency $2$ and $\mathcal{B}_n\setminus \{w_0\}$ is connected we conclude that $\beta_1(\mathcal{B}_n)>0$ as desired.

\emph{Case 2. $\mathcal{C}_n$ contains a vertex $v$ of type $h$.}\\ This case follows directly from the definition of collapse. 

To finish we check that $\varphi(\cE_0(H)^c)\subseteq \cE_0(G)^c$. For this, consider an end $\alpha$ defined by a sequence $(\mathcal{C}_n)_{n\in\N}$. In this case, there must exist an integer $n_0>0$ such that $\mathcal{C}_{n_0}$ is a tree without $h$-vertices. Then, by definition of collapse we have that $\mathcal{B}_{n_0}=f^{-1}(\mathcal{C}_{n_0})$ is a tree and therefore, $\fhi(\alpha)\notin\cE_0(G)$ as desired. This finishes the proof of the Proposition.
\end{proof}

\subsection{Elementarily decomposable $C$-inclusions} According to the strategy of the proof of Theorem \ref{t.grafos} outlined in \S \ref{ss.strategy}, we need to ``realize'' some inclusions of $C$-graphs as lifts inside a tower of coverings. Elementary inclusions will be the basic blocks of those inclusions that our techniques allow us to realize. This will become clear and formal in the next two sections.

\paragraph{\textbf{Basic pieces and $C$-inclusions}}Given a $C$-subgraph $B$, we say it is a \emph{basic piece} if $B=B_G(v,1)$ where $v$ is a non-boundary vertex. We call them $s$, $h_2$ or $h_4$-pieces according to the vertex type of $v$. Notice that basic pieces are the smallest possible $C$-subgraphs. 

\begin{figure}[h!]
\centering
\includegraphics[scale=0.04]{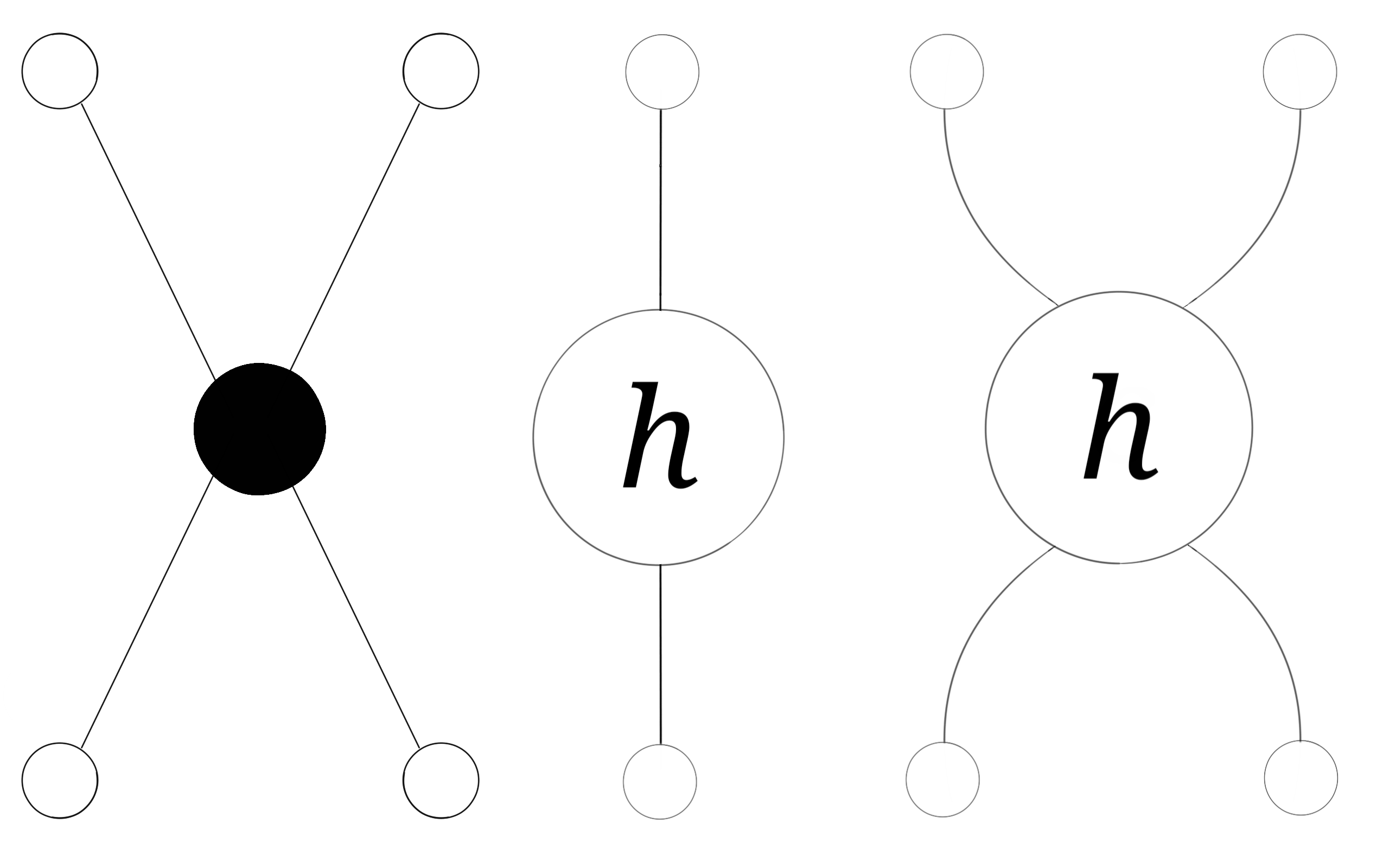}
\caption{Basic pieces.}\label{fig:basicou}
\end{figure}  

An injective map between $C$-graphs $\iota:G_1\to G_2$ is a \emph{$C$-inclusion} if it is injective, preserve vertex types and satisfies that $\iota(G_1)$ is a $C$-subgraph of $G_2$. Notice that it may happen that the $C$-inclusion sends a $b_1$-vertex of $G_1$ into a $b_2$-vertex of $G_2$. 

\paragraph{\textbf{Elementarily decomposable $C$-inclusions }}

We say that a $C$-inclusion $\iota:H_1\to H_2$ is \emph{elementary} if it satisfies that $H_2=\iota(H_1)\cup B$ where $B$ can be
\begin{enumerate}
\item an $h_2$-piece meeting $\iota(H_1)$ in a single boundary vertex;
\item an $s$-piece meeting $\iota(H_1)$ in a single boundary vertex, or
\item an $h_4$-piece meeting $\iota(H_1)$ in exactly two boundary vertices.
\end{enumerate}
Also, we say that $\iota:H_1\to H_2$  is an \emph{elementarily decomposable $C$-inclusion} if there exist:
\begin{itemize}
\item  $C$-graphs $K_1,\ldots,K_n$ with $K_1=H_1$ and $K_n=H_2$ and 
\item elementary $C$-inclusions $j_i:K_i\to K_{i+1}$ with $i=1,\ldots n-1$ such that $\iota=j_{n-1}\circ\ldots\circ j_1$ (see Figure \ref{fig:elemouille}).
\end{itemize}

\begin{figure}[h!]
\centering
\includegraphics[scale=0.1]{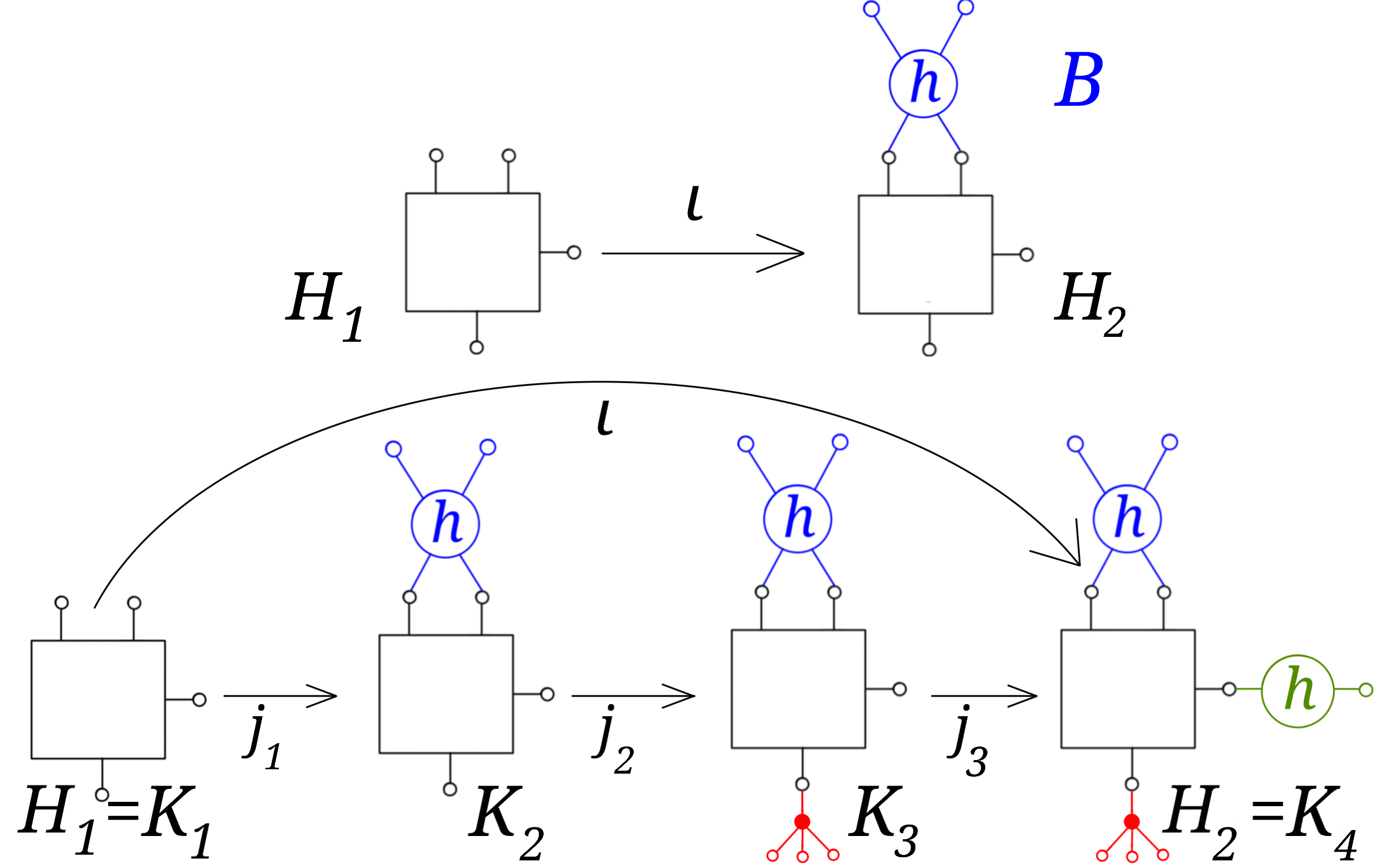}
\caption{An elementary inclusion (upstairs) and an elementarily decomposable inclusion (downstairs): the graph $H_2$ is obtained from $H_1$ by attaching successively an $h_4$-piece, an $s$-piece and an $h_2$-piece.}\label{fig:elemouille}
\end{figure}

\section{Forests of $C$-graphs}\label{ss.forests} As outlined in \S \ref{ss.strategy}, the leaves of the lamination by graphs constructed in Theorem \ref{t.grafos} will be obtained as direct limits of $C$-subgraphs of coverings in the tower. In order to include simultaneously all the desired leaves, we will consider a family of these $C$-subgraphs organized in an ``arborescent structure'' called a forest of $C$-subgraphs. The abstract version of forests of $C$-subgraphs are forests of $C$-graphs. In the first paragraph we introduce these two concepts and the concept of realization of forests in towers which relates them.

\subsection{Forests of $C$-graphs and realization. }\label{ss.forests_inclusion_realization}
Given an oriented graph $\Gamma$ and an edge $e\in E(\Gamma)\subseteq V(\Gamma)^2$, define its \emph{origin} and \emph{terminal} vertices as the vertices $o(e)$ and $t(e)$ so that $e=(o(e),t(e))$.

\paragraph{\textbf{Forests}}

A \emph{forest} is an oriented graph $\mathcal{T}=(V(\mathcal{T}),E(\mathcal{T}))$ where the set $V(\cT)$ of vertices and the set $E(\cT)\dans V(\cT)^2$ of oriented edges satisfy

 \begin{itemize}
 \item $V(\mathcal{T})$ has a countable partition $V(\mathcal{T})=\bigsqcup_{n\in\N}V_n(\mathcal{T})$ were each $V_n(\mathcal{T})$ is finite. We call $V_n(\mathcal{T})$ the $n$-th floor of $\mathcal{T}$. 
 \item $E(\mathcal{T})$ is contained in $\bigcup_{n\in\N} (V_n(\mathcal{T})\times V_{n+1}(\mathcal{T}))$. In other words, given any edge, its terminal vertex is one floor upper than its origin vertex.
\item Every vertex in $\bigcup_{n\geq 1}V_n(T)$ is the terminal vertex of exactly one edge. 
\item Every vertex is the origin vertex of at least one edge.
\end{itemize}

In other words forests are a disjoint union of finitely many trees whose vertices have an integer graduation. We write $E(\mathcal{T})=\bigsqcup_{n\in\N}E_n(\mathcal{T})$ where $E_n(\mathcal{T})=\{e\in E(\mathcal{T}):o(e)\in V_n(\mathcal{T})\}$. We consider all paths in $\cT$ to be reduced, meaning: if $\rho=e_1\ldots e_k$ is a \emph{path} in $\cT$ then $e_i\neq e_{i+1}$ for $i=1,\ldots,k-1$.

\paragraph{\textbf{Forests of $C$-(sub)graphs }}We define a \emph{forest} of $C$-graphs as a triple $$\mathcal{H}=(\mathcal{T},\{H_v\}_{v\in V(\mathcal{T})},\{\iota_e\}_{e\in E(\mathcal{T})})$$ where $\mathcal{T}$ is a forest, $\{H_v\}_{v\in V(\mathcal{T})}$ is a family of finite $C$-graphs and $\{\iota_e\}_{e\in E(\mathcal{T})}$ is a family of $C$-inclusions $\iota_e:H_{o(e)}\to H_{t(e)}$. 

There is a particular case of forest of $C$-graphs which is of particular interest for our purpose. That is the case where the family of $C$-graphs $(H_v)_{v\in V(\mathcal{T})}$ is a family of $C$-subgraphs included in the coverings of a tower. We proceed to give a formal definition. 

\begin{definition}

A \emph{forest of $C$-subgraphs} is a forest of $C$-graphs $$\mathcal{S}=(\mathcal{T},\{G_v\}_{v\in V(\mathcal{T})},\{j_e\}_{e\in E(\mathcal{T})})$$ so that, there exists a tower $\mathbb{U}=\left\{q_n:\Gamma_{n+1}\to \Gamma_n\right\}$ satisfying: 
\begin{itemize}
\item $\{G_v:v\in V_n(\cT)\}$ consists of a disjoint family of finite $C$-subgraphs of $\Gamma_n$;
\item the family of $C$-inclusions $\{j_e:G_{o(e)}\to G_{t(e)}:e\in E(\mathcal{T})\}$ consists of $(1:1)$ lifts. That is $q_n\circ j_e=\Id$ for every $e\in E_n(\mathcal{T})$.
\end{itemize} 
\end{definition}
In this case we say that $\mathcal{S}$ is \emph{included} in the tower $\mathbb{U}$.

\paragraph{\textbf{Realization in towers }} Let $\mathcal{H}=(\mathcal{T},\{H_v\}_{v\in V(\mathcal{T})},\{\iota_e\}_{e\in E(\mathcal{T})})$ be  a forest of $C$-graphs. We say that it is \emph{realized in a tower} if there exist \begin{itemize}
\item a $C$-subgraph forest  $\mathcal{S}=(\mathcal{T},\{G_v\}_{v\in V(\mathcal{T})},\{j_e\}_{e\in E(\mathcal{T})})$ and 
\item a family of collapses $\{f_v:G_v\to H_v:v\in V(\cT)\}$
\end{itemize} 
satisfying $f_{t(e)}\circ j_e=\iota_e\circ f_{o(e)}$ for every $e\in E(\cT)$.

\subsection{Limits of forests of $C$-graphs}\label{ss.limits_forest} In this paragraph we define limits of forest of $C$-(sub)graphs. These are families of $C$-graphs, parametrized by the ends of the underlying forest, which are obtained taking direct limits of sequences of $C$-inclusions. We show (under mild assumptions), that the limits of subgraphs forests embed as leaves in the inverse limit lamination of the underlying tower of coverings. Then, in the case we have a realization of a $C$-graph forest, we show that the ends pairs of the limits of this $C$-graph forest are realized as leaves of ends pairs of the lamination associated to the tower. 

\paragraph{\textbf{Direct limits }} Let $(G_n)_{n\in\N}$ be a sequence of $C$-graphs and $\{\iota_n:G_n\to G_{n+1}\}$ be a sequence of $C$-inclusions. Then, we define the \emph{direct limit} of the sequence as 
$$G_\infty=\underrightarrow{\lim}\{\iota_n:G_n\to G_{n+1}\}= \left.\bigsqcup G_n\right/\sim$$
where $\sim$ is the equivalence relation generated by $\forall x\in G_n,\,x\sim \iota_n(x)$. The space $G_\infty$ is naturally a $C$-graph. Moreover there exists a $C$-inclusion $I_n:G_n\to G_\infty$ such that $I_{n+1}\circ \iota_n=I_n$ for every $n\in\N$. 

We say that $f:G\to H$ is a \emph{collapse onto its image} if \begin{itemize}
\item $f(G)$ is a $C$-subgraph of $H$, and 
\item $f:G\to f(G)$ is a collapse
\end{itemize} 

Direct limits enjoy the following universal property which allow us to construct maps defined on them.

\begin{proposition}[Universal property]\label{p.universal}
Let $G$ be a $C$-graph. Assume that there exists a sequence of maps $\phi_n:G_n\to G$ which satisfy the compatibility condition
$$\phi_n=\phi_{n+1}\circ \iota_n.$$
Then there exists a map $\phi:G_\infty\to G$ such that for every $n\in\N$
$$\phi_n=\phi\circ I_n.$$ Moreover, \begin{enumerate}
\item if the maps $\phi_n$ are $C$-inclusions, so is $\phi$ and
\item if the maps $\phi_n$ are collapses onto its image, so is $\phi$.
\end{enumerate}
\end{proposition}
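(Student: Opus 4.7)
The plan is to build $\phi$ using the universal property of direct limits in the category of sets: put $\phi([x]) := \phi_n(x)$ for any representative $x\in G_n$. The compatibility $\phi_n=\phi_{n+1}\circ\iota_n$ is exactly the condition needed to make this independent of the representative, and the defining relation $\phi_n=\phi\circ I_n$ follows at once. The property of being a graph morphism is local, so it is inherited from each $\phi_n$ since every point of $G_\infty$ lies in the image of some $I_n$.

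For item (1), each defining condition of a $C$-inclusion passes to the limit. Injectivity: if $\phi(I_n(x))=\phi(I_m(y))$ with $n\le m$, push $x$ to $G_m$ via the $\iota$'s and apply injectivity of $\phi_m$ to conclude $I_n(x)=I_m(y)$. Preservation of vertex types is local, hence automatic. Finally, the compatibility relation forces $\phi_n(G_n)\subseteq\phi_{n+1}(G_{n+1})$, so $\phi(G_\infty)=\bigcup_n\phi_n(G_n)$ is a nested union of $C$-subgraphs of the fixed $C$-graph $G$; the $C$-subgraph condition $B_G(v,1)\subseteq\phi(G_\infty)$ for every non-boundary vertex $v$ can then be checked inside any single $\phi_n(G_n)$ containing $v$.

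The substance of the proof is item (2). Let $\mathcal{F}_n$ be the collapsing family of $\phi_n$ and, for each $h$-vertex $v\in\phi_n(G_n)$, write $S^v_n:=\phi_n^{-1}(v)\in\mathcal{F}_n$. The key claim is the stabilization identity
\[
\iota_n(S^v_n)\;=\;S^v_{n+1}\qquad\text{whenever }v\in\phi_n(G_n).
\]
The inclusion ``$\subseteq$'' is immediate from compatibility. The reverse inclusion is a connectedness argument: every vertex of $\iota_n(S^v_n)$ is of interior (non-$b_1$) type in the $C$-subgraph $\iota_n(G_n)\subseteq G_{n+1}$, since $S^v_n\subseteq\text{Int}(G_n)$ and $\iota_n$ preserves vertex types; the defining property of $C$-subgraph applied in $G_{n+1}$ then forces all its graph neighbors in $G_{n+1}$ to already lie in $\iota_n(G_n)$. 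Propagating this through the connected subgraph $S^v_{n+1}$ starting from any point of the non-empty set $\iota_n(S^v_n)$ yields $S^v_{n+1}\subseteq\iota_n(G_n)$, whence $S^v_{n+1}=\iota_n(\iota_n^{-1}(S^v_{n+1}))=\iota_n(S^v_n)$.

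With stabilization in hand, set $\mathcal{F}_\infty:=\bigcup_n I_n(\mathcal{F}_n)$; each of its members is a finite, connected, homologically non-trivial subgraph of $\text{Int}(G_\infty)$ isomorphic to a common $S^v_n$ at every sufficiently large level, and the family is disjoint by the disjointness of each $\mathcal{F}_n$. The remaining pieces of the definition of a collapse -- the bijection between $\mathcal{F}_\infty$ and the set of $h$-vertices of $\phi(G_\infty)$, the factorization $\phi=\hat\phi\circ\pi$ through a graph isomorphism $\hat\phi:G_\infty/\mathcal{F}_\infty\to\phi(G_\infty)$, and the preservation of vertex types away from $\bigcup\mathcal{F}_\infty$ -- are all inherited level-by-level from the corresponding properties of the $\phi_n$'s. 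The stabilization identity is the hardest point; the rest is bookkeeping.
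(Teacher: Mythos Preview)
Your proof is correct and takes essentially the same approach as the paper: both establish the stabilization $\iota_n(S^v_n)=S^v_{n+1}$ by combining $S^v_{n+1}\cap\iota_n(G_n)=\iota_n(S^v_n)$ with connectedness of $S^v_{n+1}$ and the fact that $\iota_n(S^v_n)\subseteq\mathrm{Int}(\iota_n(G_n))$, the paper packaging this via induced quotient maps $\overline{\iota_n}:G_n/\mathcal{F}_n\to G_{n+1}/\mathcal{F}_{n+1}$ while you argue directly. One minor imprecision: the $C$-subgraph axiom $B_G(v,1)\subseteq S$ is stated only for non-boundary vertices, so for the $b_2$-vertices of $\iota_n(S^v_n)$ you need the extra (trivial) observation that boundary vertices have maximal valency~$2$, which forces their $G_{n+1}$-neighbours into $\iota_n(G_n)$ as well.
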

\begin{proof} Assume first that all $\phi_n$ are $C$-inclusions. Since all the $\phi_n$ are injective so is $\phi$. Also notice that unions of $C$-subgraphs are $C$-subgraphs and therefore $\phi(G_\infty)=\bigcup_{n\in\N}\phi_n(G_n)$ is a $C$-subgraph. Then $\phi$ is a $C$-inclusion as desired. 

Consider now the case where all $\phi_n$ are collapses onto their images. First notice that we can re-define $H$ to be $\phi(G_{\infty})$ and the hypotheses hold. Also, by definition of collapse, for each $n\in\N$ there exist \begin{itemize} 
\item $\mathcal{F}_n$ a disjoint family of finite, connected and non-homologically trivial subgraphs contained in $\text{Int}(G_n)$; 
\item injective maps $\hat\phi_n:G_n/\mathcal{F}_n\to H$ such that $\phi_n=\hat\phi_n\circ \pi_n$ where \\$\pi_n:G_n\to G_n/\mathcal{F}_n$ is the quotient projection.  

\end{itemize}

Since $\hat\phi_n\circ\pi_n=\hat\phi_{n+1}\circ\pi_{n+1}\circ \iota_n$, there exists a map $$\overline{\iota_n}:G_n/\mathcal{F}_n\to G_{n+1}/\mathcal{F}_{n+1}$$ satisfying the equations: \begin{enumerate}
\item\label{ecua1} $\pi_{n+1}\circ \iota_n=\overline{\iota_n}\circ\pi_n$ 
\item\label{ecua2} $\hat\phi_{n}=\hat\phi_{n+1}\circ\overline{\iota_n}$
\end{enumerate} 
From Condition (\ref{ecua1}) we deduce that for each $S\in\mathcal{F}_n$ there exists $S'\in \mathcal{F}_{n+1}$ with $\iota_n(S)\subseteq S'$. Moreover, from Condition (\ref{ecua2}) we deduce that $\overline{\iota_n}$ is injective and therefore $\iota_n(G_n)\cap S'=\iota_n(S)$.  

On the other hand, since $\pi_n(S)\cap\pi_n(\partial G_n)=\emptyset$ and $\overline{\iota_n}$ is injective, Condition (\ref{ecua1}) imply that $\pi_{n+1}(\iota_n(S))\cap \pi_{n+1}(\iota_n(\partial G_n))=\emptyset$. Taking preimages we get that $S'\cap \iota_n(\partial G_n)=\emptyset$. Then, since $\iota_n(\partial G_n)=\partial \iota_n(G_n)$ we can decompose $$S'=[\text{Int}(\iota_n(G_n))\cap S']\cup[\iota_n(G_n)^c\cap S'].$$ On the other hand, since $S'$ is connected we conclude that $\iota_n(S)=S'$. This implies that $I_n(\mathcal{F}_n)\subseteq I_{n+1}(\mathcal{F}_{n+1})$. Therefore $\mathcal{F}_{\infty}:=\cup_{n\in\N}I_n(\mathcal{F}_n)$ is a disjoint family of finite, connected and non-homologically trivial subgraphs contained in $\text{Int}(G_{\infty})$. Then, by the universal property of the quotient, $\phi=\hat\phi\circ\pi_\infty$ where $\pi_\infty:G_\infty\to G_{\infty/\mathcal{F}_\infty}$ is the quotient projection and $\hat\phi$ is an isomorphism. Also, since $\phi_n$ preserve vertex types when restricted to $G_n\setminus \cup_{S\in\mathcal{F}_n}S$ it holds that $\phi$ does the same when restricted to $G_\infty\setminus \cup_{S\in\mathcal{F}_\infty}S$. This finishes the proof of the Proposition. 
\end{proof}

\paragraph{\textbf{Limits of forests of $C$-graphs} } Now we are ready to define the \emph{limits} of a forest of $C$-graphs. For this consider a forest of $C$-graphs $$\mathcal{H}=(\cT,\{G_v\}_{v\in V(\cT)},\{j_e\}_{e\in E(\cT)})$$ and an end $\alpha\in\cE(\cT)$. Denote $\rho^\alpha$ the semi-infinite ray starting at $V_0(\cT)$ and converging to $\alpha$. Write $\rho^\alpha=(e_i)_{i\in\N}$. We define the \emph{limit of $\mathcal{H}$ associated to} $\alpha$ as the $C$-graph 
\begin{equation}\label{eq.gend}
G^{\alpha}=\underrightarrow{\lim}\{j_{e_n}:G_{o(e_n)}\to G_{t(e_n)}\}.
\end{equation}
Finally, we define the \emph{limits of} $\mathcal{H}$ as the family $\{G^{\alpha}:\alpha\in\cE(\cT)\}$. 

\paragraph{\textbf{Limits of forests of $C$-subgraphs and leaves}}Consider a forest of $C$-subgraphs $$\mathcal{S}=(\mathcal{T},\{G_v\}_{v\in V(\mathcal{T})},\{j_e\}_{e\in E(\mathcal{T})})$$ included in a tower $\mathbb{U}=\{q_n:\Gamma_{n+1}\to\Gamma_n\}$ and let $\mathcal{M}$ denote the inverse limit of $\mathbb{U}$. We will show how to embed the limits of $\mathcal{S}$ in the leaves of $\mathcal{M}$.

\begin{proposition}\label{p.iso} 
Assume that $j_e(G_{o(e)})\subseteq \Int(G_{t(e)})$ for every edge $e\in E(\cT)$. Then, for every end $\alpha\in\cE(\cT)$, there exists a leaf $\cL^\alpha$ of $\mathcal{M}$ and an isomorphism of $C$-graphs $$\phi^\alpha:G^\alpha\to\cL^\alpha,$$
where $G_\alpha$ is the limit graph defined by \eqref{eq.gend}.
\end{proposition}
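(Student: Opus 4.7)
The plan is to construct $\phi^\alpha$ via the universal property of direct limits and then show that its image exhausts a whole leaf by a connectedness argument.

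First I would fix the ray $\rho^\alpha = (e_n)_{n\in\N}$ and set $v_n = o(e_n) \in V_n(\cT)$. For each $n$ I would define $\phi^\alpha_n \colon G_{v_n} \to \cM$ by sending $x \in G_{v_n} \subseteq \Gamma_n$ to the sequence $(x_k)_{k\in\N}$ whose $k$-th coordinate is $j_{e_{k-1}} \circ \cdots \circ j_{e_n}(x)$ for $k \geq n$ and $q_k \circ \cdots \circ q_{n-1}(x)$ for $k < n$. The $(1:1)$-lift relation $q_n \circ j_{e_n} = \Id$ ensures that this is a well-defined point of $\cM$, and the same identity yields the compatibility $\phi^\alpha_{n+1} \circ j_{e_n} = \phi^\alpha_n$. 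Each $\phi^\alpha_n$ is injective (it is a section for $\Pi_n$) and preserves vertex types, since the $C$-graph structure on leaves of $\cM$ is pulled back from $\Gamma_0$ along $\Pi_0$. Because the images $\phi^\alpha_n(G_{v_n})$ are connected and compatibly nested, they all sit inside a single leaf $\cL^\alpha$ of $\cM$. Proposition \ref{p.universal} then assembles the $\phi^\alpha_n$ into a $C$-inclusion $\phi^\alpha \colon G^\alpha \to \cL^\alpha$.

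The heart of the argument is to show that $\phi^\alpha$ is surjective. Since $\cL^\alpha$ is connected and $\phi^\alpha(G^\alpha)$ is nonempty, it suffices to show that this image is simultaneously open and closed. Closedness is formal: being the image of a graph morphism, $\phi^\alpha(G^\alpha)$ is a subgraph containing each of its edges together with both endpoints, hence closed in the topological realization of $\cL^\alpha$. For openness, I would fix $y = \phi^\alpha(x) \in \phi^\alpha(G^\alpha)$ and use the interiority hypothesis to represent $x$ at level $n+1$ as $j_{e_n}(x_n) \in \Int(G_{v_{n+1}})$, so that this representative is not a $b_1$-vertex of $G_{v_{n+1}}$. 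The defining property of $C$-subgraphs, applied to $G_{v_{n+1}} \subseteq \Gamma_{n+1}$, then ensures that every edge of $\Gamma_{n+1}$ incident to $j_{e_n}(x_n)$ already lies in $G_{v_{n+1}}$. Because $\Pi_{n+1}|_{\cL^\alpha} \colon \cL^\alpha \to \Gamma_{n+1}$ is a covering, the edges of $\cL^\alpha$ at $y$ are in bijection with those of $\Gamma_{n+1}$ at $j_{e_n}(x_n)$, and by uniqueness of path-lifts each such edge coincides with the $\phi^\alpha_{n+1}$-image of the corresponding edge of $G_{v_{n+1}}$. Hence $y$ admits a graph-theoretic neighborhood inside the image.

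With surjectivity in hand, $\phi^\alpha$ is a bijective graph morphism between the realizations of $G^\alpha$ and $\cL^\alpha$ that preserves vertex types, so its set-theoretic inverse is automatically a graph morphism and $\phi^\alpha$ is an isomorphism of $C$-graphs. I expect the openness step to be the main obstacle: it is exactly the place where the interiority hypothesis becomes essential, since it guarantees that every point of the image can be represented at some level by a non-boundary vertex whose entire combinatorial neighborhood is already contained in the $C$-subgraph, which is what converts the local product structure of the lamination into the desired filling-up property of $\phi^\alpha(G^\alpha)$.
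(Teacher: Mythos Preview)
Your proposal is correct and follows essentially the same route as the paper: build compatible maps $\phi^\alpha_n:G_{v_n}\to\cM$, assemble them via the universal property into a $C$-inclusion into a single leaf, and then use the interiority hypothesis to show the image is both open and closed (the paper phrases openness more tersely as $\text{Im}(\phi^\alpha_n)\subseteq\text{Int}(\text{Im}(\phi^\alpha_{n+1}))$, but your pointwise argument unpacks exactly this). One tiny wrinkle in your openness step: the $C$-subgraph axiom as stated only covers vertices not of boundary type, so when $j_{e_n}(x_n)$ is a $b_2$-vertex you should invoke a one-line valency count (valency $2$ in both $G_{v_{n+1}}$ and $\Gamma_{n+1}$, hence both incident edges already lie in $G_{v_{n+1}}$) rather than the axiom directly.
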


\begin{proof}
For this, consider $\alpha\in\cE(\cT)$ defined by a semi-infinite ray $\rho^\alpha=(e_i)_{i\in\N}$ starting at $V_0(\cT)$. Rewrite $G_n:=G_{o(e_n)}$,  $j_n:=j_{e_n}$ and notice that $G^{\alpha}=\underrightarrow{\lim}\{j_{e_n}:G_{n}\to G_{n+1}\}$. We proceed to define a family of $C$-inclusions $\phi^\alpha_n:G_n\to\mathcal{M}$. For this, if $n_1<n_2$ denote \begin{itemize}
\item $J_{n_1n_2}=j_{n_2-1}\circ\ldots\circ j_{n_{1}}:G_{n_1}\to G_{n_2}$ and 
\item $Q_{n_1n_2}=q_{n_1}\circ\ldots\circ q_{n_2-1}:\Gamma_{n_2}\to\Gamma_{n_1}$. 
\end{itemize}
Then, define $\phi^\alpha_n:G_{n}\to\mathcal{M}$ as $\phi^\alpha_n(x)=(x_k)_{k\in\N}$ where
\begin{itemize}
\item $x_k=x$ if $k=n$
\item $x_k=J_{nk}(x)$ if $k>n$
\item $x_k=Q_{kn}$ if $k<n$
\end{itemize} 
It follows directly from its definition that $\{\phi^\alpha_n:G_n\to\mathcal{M}\}_{n\in\N}$ is a family of continuous maps satisfying $\phi^\alpha_{n+1}\circ j_n=\phi^\alpha_n$. Then, by the universal property we get a continuous map $\phi:G^\alpha\to\cM$. Moreover, since the image of $\phi$ is connected, it is contained in a leaf of $\mathcal{M}$ that we denote $\cL^\alpha$. Notice that the maps $\phi_n:G_n\to\cL^\alpha$ are $C$-inclusions. Therefore, we can apply the universal property for $C$-inclusions to obtain a $C$-inclusion $\phi^\alpha:G^\alpha\to\cL^\alpha$. 

Suppose now that $j_e(G_{o(e_n)})\subseteq \text{Int}(G_{t(e_n)})$ for every $e\in E(\cT)$. This implies that $\text{Im}(\phi^\alpha_n)\subseteq \text{Int}(\phi^\alpha_{n+1})$ for every $n\in\N$ and therefore $\text{Im}(\phi^\alpha)$, which equals $\bigcup_{n\in\N}\text{Im}(\phi^\alpha_n)$, is both open and close. Then, in this case we have that $\phi^\alpha$ is an isomorphism of $C$-graphs.
\end{proof}

\subsection{Realization and leaves}
In order to prove our next Proposition we need the following Lemma.
\begin{lemma}\label{l.interior} Consider $G_1,G_2,H_1$ and $H_2$ $C$-subgraphs. Also consider \begin{itemize}
\item $C$-inclusions $j:G_1\to G_2$ and $\iota:H_1\to H_2$; and
\item collapses $h_i:G_1\to H_i$ for $i=1,2$
\end{itemize}
such that $h_2\circ j=\iota\circ h_1$ and $\iota(H_1)\subseteq\Int(H_2)$.

 Then $j(G_1)\subseteq\Int(G_2)$.
\end{lemma}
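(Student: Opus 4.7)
The plan is to argue by contradiction. Since $\partial G_2$ consists only of $b_1$-vertices and $j$, being a $C$-inclusion, is an injective graph morphism sending edges to edges, interior points of edges of $j(G_1)$ automatically lie in $\Int(G_2)$. Thus it suffices to show that no vertex $v\in G_1$ can satisfy $j(v)\in\partial G_2$. So suppose for contradiction that such a $v$ exists, meaning $j(v)$ is a $b_1$-vertex of $G_2$.

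The first step is a valency argument. Because $j(G_1)$ is a $C$-subgraph of $G_2$, every non-boundary vertex $w$ of $G_1$ has $B_{G_2}(j(w),1)\subseteq j(G_1)$, so the full valency of $w$ is inherited by $j(w)$; in particular $j(w)$ has valency $\geq 2$ and cannot be a $b_1$-vertex. Similarly, a $b_2$-vertex of $G_1$ already contributes two edges to $j(G_1)$ and therefore cannot land on a $b_1$-vertex of $G_2$. Hence $v$ must itself be a $b_1$-vertex of $G_1$, i.e.\ $v\in\partial G_1$.

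Now I invoke condition (a2) in the definition of collapse for $h_1$. The family $\mathcal{F}_1$ that $h_1$ collapses is contained in $\Int(G_1)$, so $v\notin\bigcup_{S\in\mathcal{F}_1}S$, and therefore $h_1$ preserves the type of $v$: $h_1(v)$ is a $b_1$-vertex of $H_1$. By hypothesis, $\iota(h_1(v))\in\iota(H_1)\subseteq\Int(H_2)$. On the other hand, applying exactly the same reasoning to $h_2$ at the point $j(v)\in\partial G_2$ (which lies outside the family $\mathcal{F}_2$ that $h_2$ collapses, since $\mathcal{F}_2\subseteq\Int(G_2)$), we conclude that $h_2(j(v))$ is a $b_1$-vertex of $H_2$, i.e.\ $h_2(j(v))\in\partial H_2$. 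The commutation $h_2\circ j=\iota\circ h_1$ forces $\iota(h_1(v))=h_2(j(v))$, so this point simultaneously belongs to $\Int(H_2)$ and to $\partial H_2$, a contradiction.

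I do not anticipate any genuine obstacle: the proof is essentially a type-chasing argument and relies only on the two structural facts already packaged into the definitions, namely that $C$-inclusions cannot decrease valency of interior vertices and that collapses preserve vertex type away from their contracting family. The only point that requires a bit of care is the initial valency argument that forces $v\in\partial G_1$, because the definition of $C$-inclusion explicitly allows $b_1$-vertices to be sent to $b_2$-vertices; but the forbidden direction ($b_2\to b_1$ or non-boundary $\to b_1$) is precisely what the $C$-subgraph condition rules out.
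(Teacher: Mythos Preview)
Your argument is correct and rests on the same key observation as the paper's proof: since the collapsed family $\mathcal{F}_2$ lies in $\Int(G_2)$, the collapse $h_2$ preserves the type of any $b_1$-vertex, so $h_2(\partial G_2)\subseteq\partial H_2$, equivalently $h_2^{-1}(\Int(H_2))\subseteq\Int(G_2)$. The paper packages this as a direct three-line chain of inclusions,
\[
j(G_1)\ \subseteq\ h_2^{-1}(\iota(H_1))\ \subseteq\ h_2^{-1}(\Int(H_2))\ \subseteq\ \Int(G_2),
\]
whereas you unwind it as a contradiction at a single vertex. One small remark: your ``first step'' valency argument showing $v\in\partial G_1$ is correct but unnecessary---the contradiction already follows once you know $j(v)\in\partial G_2$, since then $h_2(j(v))\in\partial H_2$ while $\iota(h_1(v))\in\iota(H_1)\subseteq\Int(H_2)$, regardless of the type of $v$ or of $h_1(v)$.
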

\begin{proof} First notice that, since $h_2(j(G_1))\subseteq \iota(H_2)$ we obtain that $j(G_1)\subseteq h_2^{-1}(\iota(H_1))$. Also, since $h_2$ is a collapse we get that $h_2^{-1}(\text{Int}(H_2))\subseteq\text{Int}(G_2)$. Finally, by hypothesis we get that $h_2^{-1}(\iota(H_1))\subseteq h_2^{-1}(\text{Int}(H_2))$.

Putting all this together we conclude that $j(G_1)\subseteq \text{Int}(G_2)$ as desired.
\end{proof}

\begin{proposition} \label{p.include}Consider a forest $\mathcal{H}=(\mathcal{T},\{H_v\}_{v\in V(\mathcal{T})},\{\iota_e\}_{e\in E(\mathcal{T})})$ satisfying $\iota_e(H_{o(e)})\subseteq \Int(H_{t(e)})$ for every $e\in E(\cT)$. Assume that $\mathcal{H}$ is realized in a tower $\mathbb{U}$ with associated lamination $\mathcal{M}$. Then, for each $\alpha\in\cE(\cT)$ there exists a leaf of $\mathcal{M}$ denoted $\cL^{\alpha}$ such that $$(\cE_0(H^\alpha),\cE(H^\alpha))=(\cE_0(\cL^\alpha),\cE(\cL^\alpha)).$$
\end{proposition}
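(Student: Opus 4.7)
Fix $\alpha \in \cE(\cT)$ and let $\rho^\alpha = (e_i)_{i \in \N}$ be the ray converging to $\alpha$, writing $v_n := o(e_n)$ (so that $v_{n+1} = t(e_n)$). Abbreviate $G_n := G_{v_n}$, $H_n := H_{v_n}$, $j_n := j_{e_n}$, $\iota_n := \iota_{e_n}$, $f_n := f_{v_n}$, so that by the definition of realization $f_{n+1} \circ j_n = \iota_n \circ f_n$ for every $n$. The strategy is to realize $G^\alpha$ as a leaf of $\cM$ using Proposition \ref{p.iso}, to assemble the $f_n$ into a single collapse $f^\alpha : G^\alpha \to H^\alpha$ via the universal property, and finally to invoke Proposition \ref{p.codingends} to compare the ends pairs.

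First I would verify that $j_e(G_{o(e)}) \subseteq \Int(G_{t(e)})$ for every $e \in E(\cT)$. This is a direct application of Lemma \ref{l.interior} with $j = j_n$, $\iota = \iota_n$ and the two collapses $f_n, f_{n+1}$: the commutation relation is precisely the realization hypothesis, and $\iota_n(H_n) \subseteq \Int(H_{n+1})$ is part of our assumption on $\mathcal{H}$. Proposition \ref{p.iso} then yields a leaf $\cL^\alpha$ of $\cM$ and an isomorphism of $C$-graphs $\phi^\alpha : G^\alpha \to \cL^\alpha$; in particular $(\cE_0(G^\alpha), \cE(G^\alpha))$ and $(\cE_0(\cL^\alpha), \cE(\cL^\alpha))$ are equivalent (in fact equal under $\phi^\alpha$).

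Let now $I^G_n : G_n \to G^\alpha$ and $I^H_n : H_n \to H^\alpha$ denote the canonical $C$-inclusions furnished by the direct limits, and define $\psi_n := I^H_n \circ f_n : G_n \to H^\alpha$. Each $\psi_n$ is a collapse onto its image, since it factors as the collapse $f_n : G_n \to H_n$ followed by the $C$-isomorphism $I^H_n : H_n \to I^H_n(H_n) \subseteq H^\alpha$ onto a $C$-subgraph. The compatibility
$$\psi_{n+1} \circ j_n \;=\; I^H_{n+1} \circ f_{n+1} \circ j_n \;=\; I^H_{n+1} \circ \iota_n \circ f_n \;=\; I^H_n \circ f_n \;=\; \psi_n$$
holds by the defining relation $I^H_{n+1} \circ \iota_n = I^H_n$ of the limit $C$-inclusions. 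Part (2) of Proposition \ref{p.universal} then produces a collapse onto image $f^\alpha : G^\alpha \to H^\alpha$; moreover $\bigcup_n \psi_n(G_n) = \bigcup_n I^H_n(H_n) = H^\alpha$ (each $f_n$ is surjective since it is a collapse), so $f^\alpha$ is actually surjective and is therefore a genuine collapse in the sense of \S\ref{ss.collapses}. Proposition \ref{p.codingends} applied to $f^\alpha$ gives the equivalence of ends pairs of $G^\alpha$ and $H^\alpha$, which combined with the identification from the previous paragraph concludes the proof.

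The only point requiring any care is the verification that the sequence of collapses $f_n$ assembles into a single collapse at the limit, i.e.\ that part (2) of Proposition \ref{p.universal} is genuinely applicable (surjectivity of $f^\alpha$, compatibility, and the fact that each $\psi_n$ is a collapse onto its image rather than only a graph morphism). Once this is granted, the remainder is a routine chain of invocations of results already established in the paper, so no serious obstacle is expected.
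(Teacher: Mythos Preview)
Your proposal is correct and follows essentially the same route as the paper's proof: both use Lemma \ref{l.interior} to transfer the interior condition from $\mathcal{H}$ to $\mathcal{S}$, invoke Proposition \ref{p.iso} to identify $G^\alpha$ with a leaf $\cL^\alpha$, assemble the collapses $f_n$ into a limit collapse $G^\alpha\to H^\alpha$ via part~(2) of Proposition \ref{p.universal} (checking surjectivity exactly as you do), and conclude with Proposition \ref{p.codingends}. The only difference is the order in which the two halves are presented.
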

\begin{proof}Write $\mathcal{U}=\{q_n:\Gamma_{n+1}\to\Gamma_n:n\in\N\}$. Since $\mathcal{H}$ is realized in $\mathbb{U}$ there exist:
\begin{itemize}
\item a forest of $C$-subgraphs $\mathcal{S}=(\mathcal{T},\{G_v\}_{v\in V(\mathcal{T})},\{j_e\}_{e\in E(\mathcal{T})})$ where $G_v$ is a $C$-subgraph of $\Gamma_n$ for every $v\in V_n(\cT)$ and $n\in\N$;
\item a family of collapses $\{h_v:G_v\to H_v:v\in V(\mathcal{T})\}$ satisfying $h_{t(e)}\circ j_e=\iota_e\circ h_{o(e)}$ for every $e\in E(\cT)$.
\end{itemize}

We first construct a family of collapses $\{h^\alpha:G^\alpha\to H^\alpha:\alpha\in\cE(\cT)\}$. For this, take $\alpha\in\cE(\cT)$ and $\rho=(e_i)_{i\in\N}$ the semi-infinite ray in $\cT$ converging $\alpha$ with $o(e_0)\in V_0(\cT)$. Recall that \begin{itemize}
\item $G^{\alpha}=\underrightarrow{\lim}\{j_{e_n}:G_{o(e_n)}\to G_{o(e_{n+1})}\}$,

\item $H^{\alpha}=\underrightarrow{\lim}\{\iota_{e_n}:H_{o(e_n)}\to H_{o(e_{n+1})}\}$ and
\end{itemize}
denote $I_n:H_{o(e_n)}\to H^\alpha$ and $J_n:G_{o(e_n)}\to G^\alpha$ the corresponding $C$-inclusions. Define $\psi_n^\alpha:G_n\to H^\alpha$ as $\psi^\alpha_n=I_n\circ h_{o(e_n)}$. Since $h_{o(e_n)}$ is a collapse and $I_n$ is a $C$-inclusion we have that $\psi^\alpha_n$ is a collapse onto its image. On the other hand, since $\iota_{e_n}\circ h_{o(e_n)}=h_{t(e_n)}\circ j_{e_n}$ and $I_n=I_{n+1}\circ \iota_{e_n}$ we conclude that $\psi_n^\alpha=\psi_{n+1}^\alpha\circ j_{e_n}$. Therefore, by Proposition \ref{p.universal}, there exists a collapse onto its image $\psi^\alpha:G^\alpha\to H^\alpha$. To show that $\psi^\alpha$ is indeed an isomorphism note that $$\text{Im}(\psi^\alpha)=\bigcup_{n\in\N}\text{Im}(\psi^\alpha_n)=\bigcup_{n\in\N}\text{Im}(I_n)=H^\alpha.$$ Then, applying Proposition \ref{p.codingends} we deduce that the end pairs $(\cE_0(H^\alpha),\cE(H^\alpha))$ and $(\cE_0(G^\alpha),\cE(G^\alpha))$ are equivalent. 

On the other hand, since $\iota_e(H_{o(e)})\subseteq\text{Int}(H_{t(e)})$ for every $e\in E(\cT)$, Lemma \ref{l.interior} implies that $j_e(G_{o(e)})\subseteq\text{Int}(G_{t(e)})$ for every $e\in E(\cT)$. Then, we are in condition to apply Proposition \ref{p.iso} to the forest $\mathcal{S}$ and find, for each $\alpha\in\cE(\cT)$ a leaf of $\mathcal{M}$ denoted by $\cL^\alpha$  which is isomorphic to $G^\alpha$. In particular, it holds that the end pairs $(\cE_0(G^\alpha),\cE(G^\alpha))$ and $(\cE_0(\cL^\alpha),\cE(\cL^\alpha))$ are equivalent. This finishes the proof of the Proposition. 

\end{proof}

\section{Surgeries and the main Lemma}\label{s.surgeries}
The main result of this section is Lemma \ref{l.main} where we show how to realize some families of $C$-graph forests in towers. The main ingredients in the proof of this lemma are Lemmas \ref{l.bblock} and \ref{l.multiplelift} which allow us to perform the inductive step. These lemmas heavily rely on the surgery operation, which we proceed to define.  

\subsection{Surgeries of finite covers}\label{ss.surgery}

Consider a $C$-graph $\Gamma$ together with a subset $X\subseteq\Gamma$ consisting of $b_2$-vertices. We define $\Gamma_X$ as the $C$-graph obtained by \emph{cutting $\Gamma$ along} the vertices in $X$. Namely, there exists a map $j_X:\Gamma_X\to\Gamma$ such that $j_X|_{\Gamma_X\setminus X}$ is $(1:1)$ onto $\Gamma\setminus X$ and each $a\in X$ has two preimages. In other words, each vertex in $X$ splits into two $b_1$-vertices.   

We proceed to define the operation of surgery. For this, consider a finite covering $p_0:\Gamma\to\Gamma_0$ together with finite subsets of $b_2$-vertices $X\subseteq\Gamma$ and $X_0\subseteq\Gamma_0$ such that $p_0|_{X}$ is $(2:1)$ onto $X_0$. For each $a\in X_0$ denote $e^-_{a}$ and $e^+_{a}$ the edges of $\Gamma_0$ adjacent to $a$. 

Now, for each $a\in X_0$ denote: 
\begin{itemize}
\item $a_1$, $a_2$ its preimages in $X$ under $p_0$;
\item $e_{a_i}^+$, $e_{a_i}^-$ the preimages of $e^+_a$ and $e^-_a$ which are adjacent to $a_i$; 
\item $a_i^{\pm}$ the copies of $a_i$ in $\Gamma_{X}$ which are adjacent to $j_X^{-1}(e_{a_i}^{\pm})$. 
\end{itemize}

Define $X^+=\{a_i^+:a\in X_0,i=1,2\}$ and $X^-=\{a_i^-:a\in X_0,i=1,2\}$. Then, we define $\Gamma^X$ as the quotient of $\Gamma_X$ given by the equivalence relation $a_1^+\sim a_2^-$, $a_1^-\sim a_2^+$ for every $a\in X_0$. Denote $j^X:\Gamma_X\to\Gamma^X$ the quotient projection. Since $p_0\circ j_X$ passes to the quotient, we can define $p_X:\Gamma^X\to\Gamma_0$ satisfying $p_X\circ j^X=p_0\circ j_X$. It is straightforward to check that $p_X$ is indeed a covering map. We call $p_X$ the \emph{surgery of $p_0$ along X}. We refer to Figure \ref{fig:surjibouille}.  Note that non-connected covering spaces can become connected after surgery.

\begin{figure}[h!]
\centering
\includegraphics[scale=0.12]{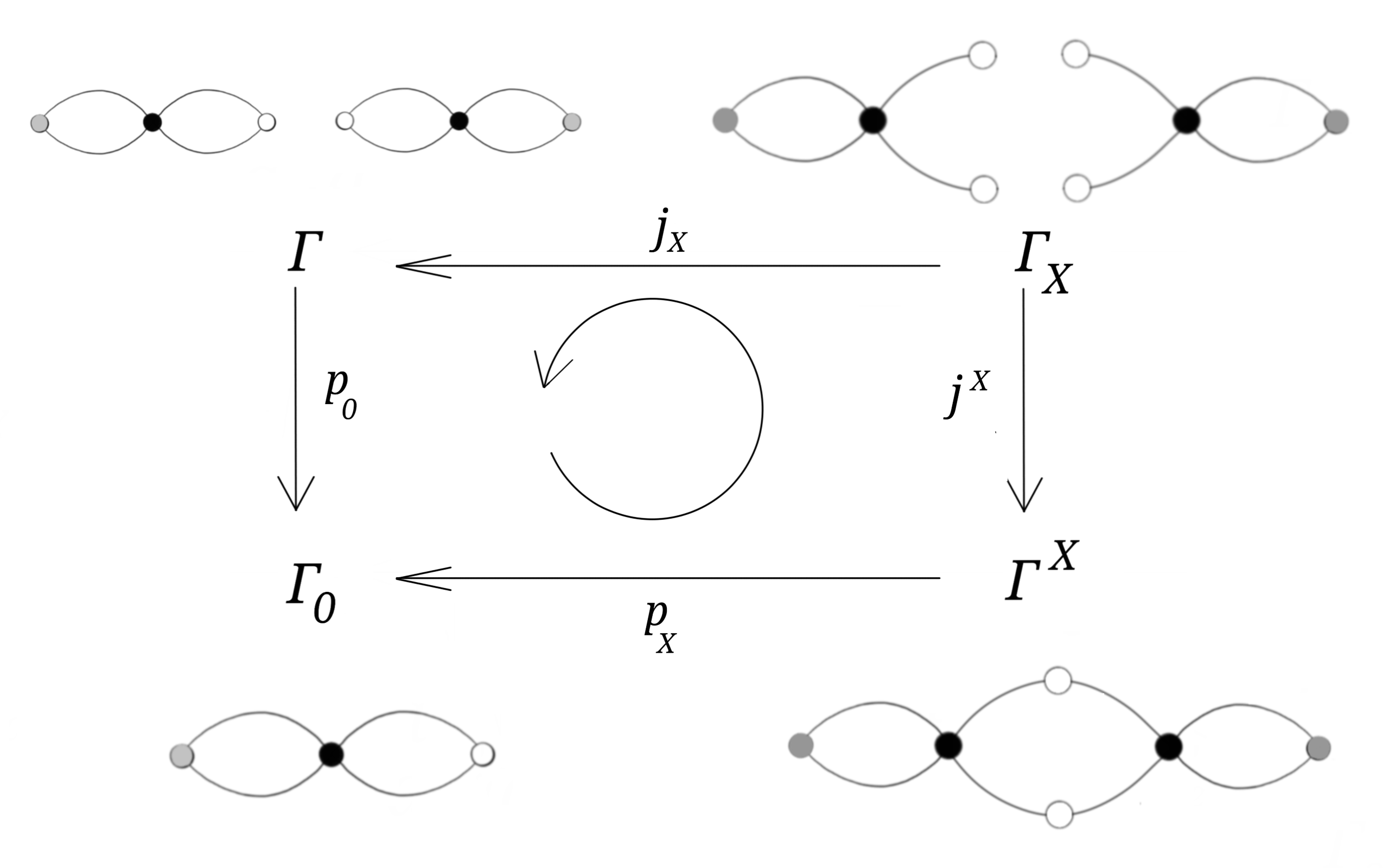}
\caption{The surgery of a \emph{non-connected} $2$-fold covering $\Gamma$ of the figure eight $C$-graph $\Gamma_0$ along a set $X$ of $2$ boundary vertices pictured in white.}\label{fig:surjibouille}
\end{figure}

\begin{remark}\label{r.connected} We point out that boundary vertices do not disconnect finite coverings of the figure eight $C$-graph. To see this, consider a finite covering $p:\Gamma\to\Gamma_0$ and a boundary vertex $v\in\Gamma$. Let $\gamma_0$ be a simple closed curve through $p(v)$. Since $p$ is finite, the connected component of $p^{-1}(\gamma_0)$ through $v$ is also a simple closed curve that we denote $\gamma$. On the other hand, since $v$ is a boundary vertex, it holds that $v$ has a neighbourhood $U$ such that $U\setminus \{v\}$ has two connected components. Finally, since $\gamma\setminus \{v\}$ joins this two components, the remark follows. 
\end{remark}

\subsection{Two important lemmas}

The next two lemmas will be the two building blocks in order to construct towers of coverings with an a priori fixed forest of $C$-graphs included in it. The first lemma shows how to ``realize'' an elementary $C$-inclusion by a covering map: we call it an \emph{elementary realization}. The second lemma shows how to construct coverings to \emph{replicate} subgraphs, which is necessary to realize simultaneously various coverings.

\begin{lemma}[Elementary realization]\label{l.bblock} Consider $\Gamma$ a finite and connected covering space of the figure eight $C$-graph satisfying $\beta_1(\Gamma)\geq 3$. Assume that $G,G_0$ are disjoint subgraphs of $\Gamma$, $f:G\to H$ is a collapse and $\iota: H\to H'$ is an elementary $C$-inclusion. Then, there exist 
\begin{itemize}
\item a connected and finite covering $p:\hat\Gamma\to\Gamma$ and
\item disjoint subgraphs $\hat G,\hat G_0\subseteq\hat\Gamma$ satisfying: 
\begin{itemize}
\item $\hat G_0$ is a $(1:1)$-lift of $G_0$
\item there exists a $C$-inclusion $j:G\to\hat G$ and a collapse $\hat f:\hat G\to H'$ such that $p\circ j=\Id_G$ and $\iota\circ f=\hat f\circ j$. 
\end{itemize}
\end{itemize}
\end{lemma}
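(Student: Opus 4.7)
The plan is to build $\hat\Gamma$ as the result of performing a small number of surgeries at carefully chosen $b_2$-vertices of the trivial double cover $\Gamma^{(0)} = \Gamma_L \sqcup \Gamma_R$ of $\Gamma$. The two disjoint lifts $\hat G$ and $\hat G_0$ will live essentially on opposite sheets: I set $\hat G_0 := G_{0,L}$ (the left-sheet lift of $G_0$) and $\hat G := G_L \cup C$, where $G_L$ is the left-sheet lift of $G$ and $C \subseteq \hat\Gamma$ is a new subgraph attached to $G_L$ that plays the role of the piece $B$ under the extended collapse.

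Write $a$ (and $a'$ in Case 3) for the boundary vertex(es) of $H$ where $B$ attaches. By Remark \ref{r.proper} this lifts via $f$ to a $b_1$-vertex of $G$, which in $\Gamma$ is a $b_2$-vertex with one edge in $G$ and one outside. I perform surgery (as defined in \S\ref{ss.surgery}) at the preimage pair $\{a_L, a_R\}$, and in Case 3 also at $\{a'_L, a'_R\}$, plus, when needed, auxiliary surgeries at $b_2$-vertices of $\Gamma$ lying in the complement of $G \cup G_0$. The hypothesis $\beta_1(\Gamma) \geq 3$ guarantees the existence of enough such auxiliary $b_2$-vertices, and together with Remark \ref{r.connected} ensures that the result remains connected.

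The three cases of elementary inclusions are then handled as follows. In Case 1 ($h_2$-piece), surgery at $\{a_L, a_R\}$ plus one well-placed auxiliary surgery reroutes the outside edge of $a_L$ through the right sheet and back, producing a finite subgraph $C \subseteq \hat\Gamma$ attached to $G_L$ only at $a_L$ and with $\beta_1(C) = 1$; a collapse of $C$ onto a single $h_2$-vertex, combined with $\iota \circ f$ on $G_L$, defines $\hat f$. In Case 2 ($s$-piece), the single surgery at $\{a_L, a_R\}$ already suffices: it reroutes the outside edge of $a_L$ onto an $s$-vertex $w_R$ of $\hat\Gamma$, and we take $C := B_{\hat\Gamma}(w_R, 1)$ together with the type-preserving identification $C \to B$. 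In Case 3 ($h_4$-piece), surgery at $\{a_L, a_R\}$ and $\{a'_L, a'_R\}$ together with any needed auxiliary surgeries joins $a_L$ to $a'_L$ through a subgraph $C$ of positive first Betti number, whose interior $s$-vertex will collapse onto the central $h_4$-vertex of $B$.

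The required verifications are then immediate from the construction. The cover $\hat\Gamma$ is connected since at least one surgery joins the two sheets and the auxiliary ones can be chosen not to disconnect (Remark \ref{r.connected}). Since only $b_2$-vertices at $a$, $a'$ (both in $G$, hence disjoint from $G_0$) and in $\Gamma \setminus (G \cup G_0)$ are affected, $G_{0,L}$ remains untouched and disjoint from $G_L \cup C$, so $\hat G \cap \hat G_0 = \emptyset$ and $\hat G_0$ is a $(1{:}1)$-lift of $G_0$. Finally, $j \colon G \to G_L \hookrightarrow \hat G$ is by construction the canonical lift, so $p \circ j = \Id_G$, and $\hat f$ agrees with $\iota \circ f$ on $j(G)$ by design. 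The main obstacle will be the combinatorial choice of auxiliary surgery sites, particularly in Case 3, to guarantee that $C$ has exactly the right $C$-graph structure and first Betti number to collapse onto $B$, while simultaneously keeping $\hat\Gamma$ connected and preserving disjointness from $\hat G_0$; this is exactly the step where the assumption $\beta_1(\Gamma) \geq 3$ is essential.
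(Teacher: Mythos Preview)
Your overall architecture---surgery on a trivial multi-sheeted cover, with $\hat G_0$ and $j(G)$ living as untouched lifts on one sheet and the new piece $C$ carved from the others---is exactly the paper's approach, and your Case 2 matches it essentially verbatim. In Case 1 you are working harder than necessary: the paper performs \emph{only} the surgery at $\{a_L,a_R\}$ and takes $C$ to be the entire second sheet $j^X(\Gamma^{(2)}_{a})$. This is already a connected $C$-subgraph with two $b_1$-vertices (the two copies of $a$), one of which is $j(a)$ and the other free, and it has $\beta_1=\beta_1(\Gamma)-1\geq 2$; that is all a collapse onto an $h_2$-piece needs. Your extra auxiliary surgery and the target $\beta_1(C)=1$ are not required, and your description of $C$ (``through the right sheet and back'') does not make clear that it carries the second free $b_1$-vertex that the $h_2$-piece demands.

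The genuine gap is Case 3. A double cover with surgeries at $\{a_L,a_R\}$ and $\{a'_L,a'_R\}$ produces as natural candidate for $C$ the image of the second sheet cut at $a,a'$, but this graph is $\Gamma_{a,a'}$, which can be \emph{disconnected} (take for instance the $2$-fold cover of the figure eight with $\beta_1=3$ obtained by unwrapping one loop: the two $b_2$-vertices on the resulting $4$-cycle separate it). In that situation no connected $C$ with four $b_1$-vertices and $\beta_1>0$ is visibly available, and ``any needed auxiliary surgeries'' is not an argument: you neither exhibit where to place them nor show that $b_2$-vertices outside $G\cup G_0$ exist, nor that the resulting $C$ stays disjoint from $\hat G_0$. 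The paper solves this by passing to a \emph{three}-sheeted cover and using $\beta_1(\Gamma)\geq 3$ in a precise way: one first locates a non-disconnecting $b_2$-vertex $a_0$ in a homologically nontrivial component of $\Gamma_{a,a'}$, chosen so that both $\Gamma_{a_0,a}$ and $\Gamma_{a_0,a'}$ are connected; then surgeries at $\{a_0^1,a^1\}$, $\{a_0^2,a'^2\}$, $\{a^3,a'^3\}$ glue two connected sheets into a single connected $G'$ with four $b_1$-vertices meeting $j(G)$ at exactly $j(a),j(a')$, while $j(G)$ and $\hat G_0$ sit safely in the third sheet. This choice of $a_0$ is the actual content behind the hypothesis $\beta_1(\Gamma)\geq 3$, and it is precisely the step your outline leaves unwritten.
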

\begin{proof} Write $$H'=\iota(H)\cup B$$ According to the definition of elementary $C$-inclusion we divide the proof in three cases according to the type of $B$.\\

\noindent\emph{Case 1. $B$ is an $h_2$-piece meeting $\iota(H)$ in exactly $1$ boundary vertex .} \\Consider the vertex $v\in H$ such that $\iota(v)=\iota(H)\cap B$. By Remark \ref{r.proper} we have that $f^{-1}(v)$ is a single $b_1$-vertex that we denote by $a$.  

Define $\Delta=\Gamma^{(1)}\sqcup \Gamma^{(2)}$ as the disjoint union of two copies of $\Gamma$ and $q:\Delta\to\Gamma$ the natural covering. Denote $X=\{a_1,a_2\}\subseteq\Delta$ the copies of $a$ in $\Gamma^{(1)}$ and $\Gamma^{(2)}$ respectively. Note that $\Delta_X$ is naturally homeomorphic to $\Gamma^{(1)}_{a_1}\sqcup\Gamma^{(2)}_{a_2}$. 

Consider $p:\Delta^X\to\Gamma$ the surgery of $q$ along $X$, and $$j_X:\Delta_X\to\Delta, \mbox{ } j^X:\Delta_X\to\Delta^X$$ the maps given in the definition of surgery. Notice that $j^X|_{\Gamma^{(i)}_{a_i}}$ is injective for $i=1,2$. 

\begin{figure}[h!]
\centering
\includegraphics[scale=0.14]{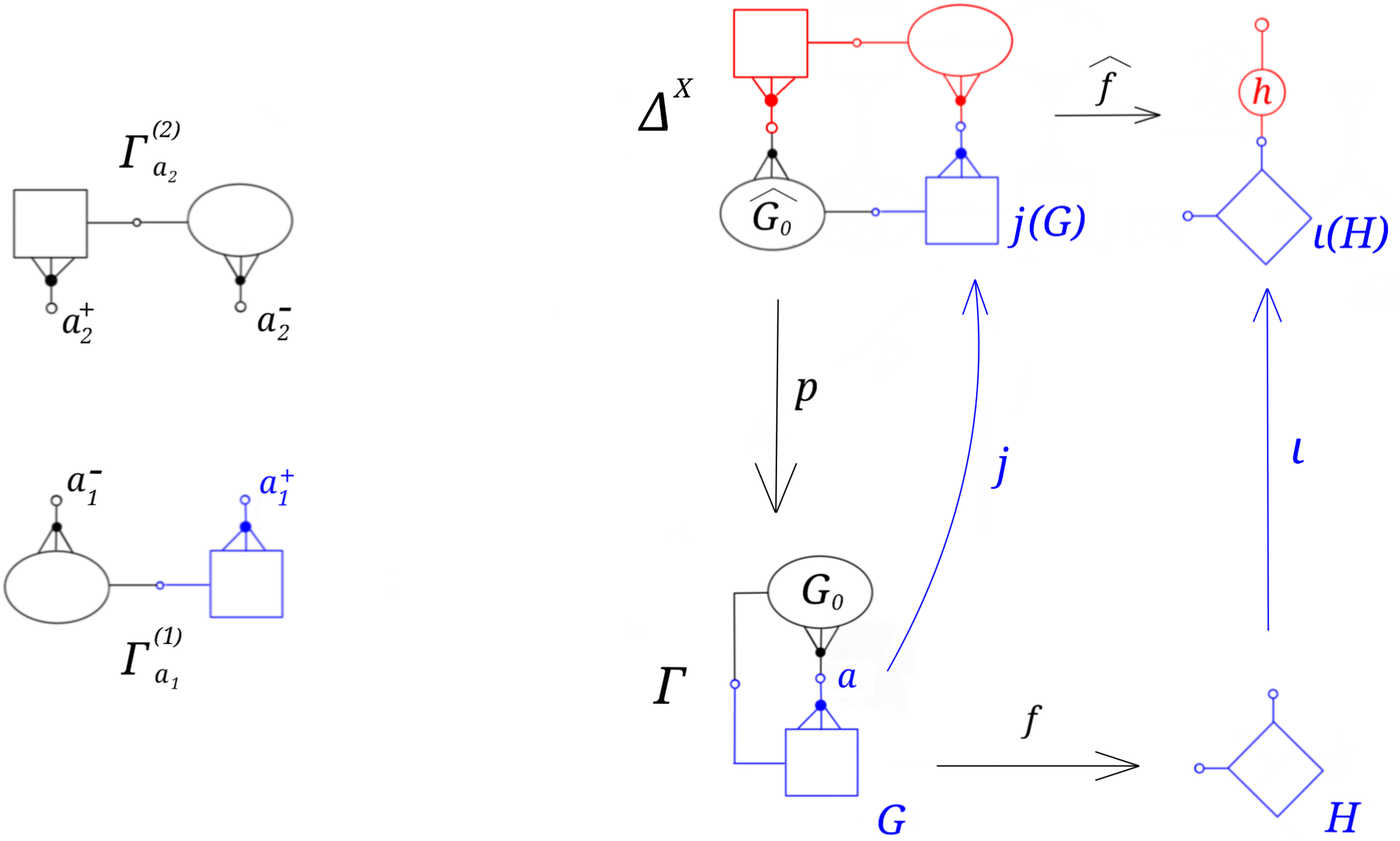}
\caption{Elementary realization in Case 1. The graph $G$, represented by some blue ``square'' graph is collapsed onto a $C$-graph $H$, represented by the blue rhombus. We realize the operation of attaching an $h_2$-piece to $H$ by performing a double covering of $\Gamma$ and attaching to the blue lift of $G$ the red graph, which has nontrivial homology, at a boundary vertex. This picture also serves for Case 2. In that case, instead of attaching to $j(G)$ the whole red graph, we only attach the red $s$-piece.}\label{fig:case1}
\end{figure}

Since $\Gamma\setminus \{a\}$ is connected (see Remark \ref{r.connected}) so is $\Delta^{X}$ which consists of the glueing of two copies of $\Gamma\setminus\{a\}$. Define $j=j^X\circ j_0$ where $j_0$ is the natural embedding of $G$ in $\Gamma^{(1)}_{a_1}$ (recall that $a$ is a $b_1$-vertex of $G$) and note that $j$ is a $(1:1)$-lift of $G$ under $p$. The same argument shows the existence of a $(1:1)$-lift of $G_0$ under $p$ that we denote by $\hat G_0$.

On the other hand, $G':=j^X(\Gamma^{(2)}_{a_2})$ is a subgraph of $\Delta^X$ with non-trivial homology and exactly two  $b_1$-vertices which meets $j(G)$ at $j(a)$. Therefore, we can define the subgraph $\hat{G}=j(G)\cup G'$ and a collapse $\hat f:\hat G\to H'$ sending $G'$ to $B$ and satisfying $\iota\circ f=\hat f\circ j$. This finishes the proof of the lemma in this case.

\noindent\emph{Case 2. $B$ is an $s$-piece meeting $\iota(H)$ in exactly $1$ boundary vertex.}\\  
In this case we use the previous construction and notation but we change the definition of $G'$. For this, let $c$ denote the $s$-vertex adjacent to $j(a)$ in $j^X(\Gamma^{(2)}_{a_2})$. Then, define $G'$ as the ball of radius $1$ around $c$. Note that $G'$ is an $s$-piece contained in $j^X(\Gamma^{(2)}_{a_2})$. Moreover, since $j^X(\Gamma^{(2)}_{a_2})$ and $j(G)$ meet at $j(a)$, so do $j(G)$ and $G'$. Therefore we can define $\hat{G}:=j(G)\cup G'$, and the collapse $\hat f:\hat G\to H'$ sending $G'$ to $B$ and satisfying $\iota\circ f=\hat f\circ j$.\\

\noindent\emph{Case 3. $B$ is an $h_4$-piece meeting $\iota(H)$ in exactly $2$ boundary vertices.}\\ Let $v_1,v_2$ denote the vertices in $B\cap \iota(H)$ and $a_i=f^{-1}(\iota^{-1}(v_i))$. Since $\beta_1(\Gamma)\geq 3$, there must exist a connected component of $\Gamma_{a_1,a_2}$ with non-trivial homology, denote $\Gamma_{a_1,a_2}^0$ such component. On the other hand, since $\Gamma_{a_1,a_2}^0$ has non-trivial homology, there must exist a $b_2$-vertex $a_0\in\Gamma_{a_1,a_2}^0$ which is non-disconnecting. This implies that (the copy of) $a_0$ does not disconnect $\Gamma_{a_1}$ nor $\Gamma_{a_2}$. In other words $\Gamma_{a_0,a_1}$ and $\Gamma_{a_0,a_2}$ are connected. Re-define $a_0$ as the copy of $a_0$ in $\Gamma$. 

Now, define $\Delta=\Gamma^{(1)}\sqcup\Gamma^{(2)}\sqcup\Gamma^{(3)}$ as three disjoint copies of $\Gamma$ and $q:\Delta\to\Gamma$ the associated covering. Define $$X=\{a_0^1,a_1^1,a_0^2,a_2^2,a_1^3,a_2^3\}$$ where $a_i^j$ is the copy of $a_i$ in $\Gamma^{(j)}$. Since each $a_i\in \{a_0,a_1,a_2\}$ has exactly two preimages in $X$ we can define $p:\Delta^X\to\Gamma$ the surgery of $q$ along $X$ and $j^X:\Delta_X\to\Delta^X$. 

\begin{figure}[h!]
\centering
\includegraphics[scale=0.18]{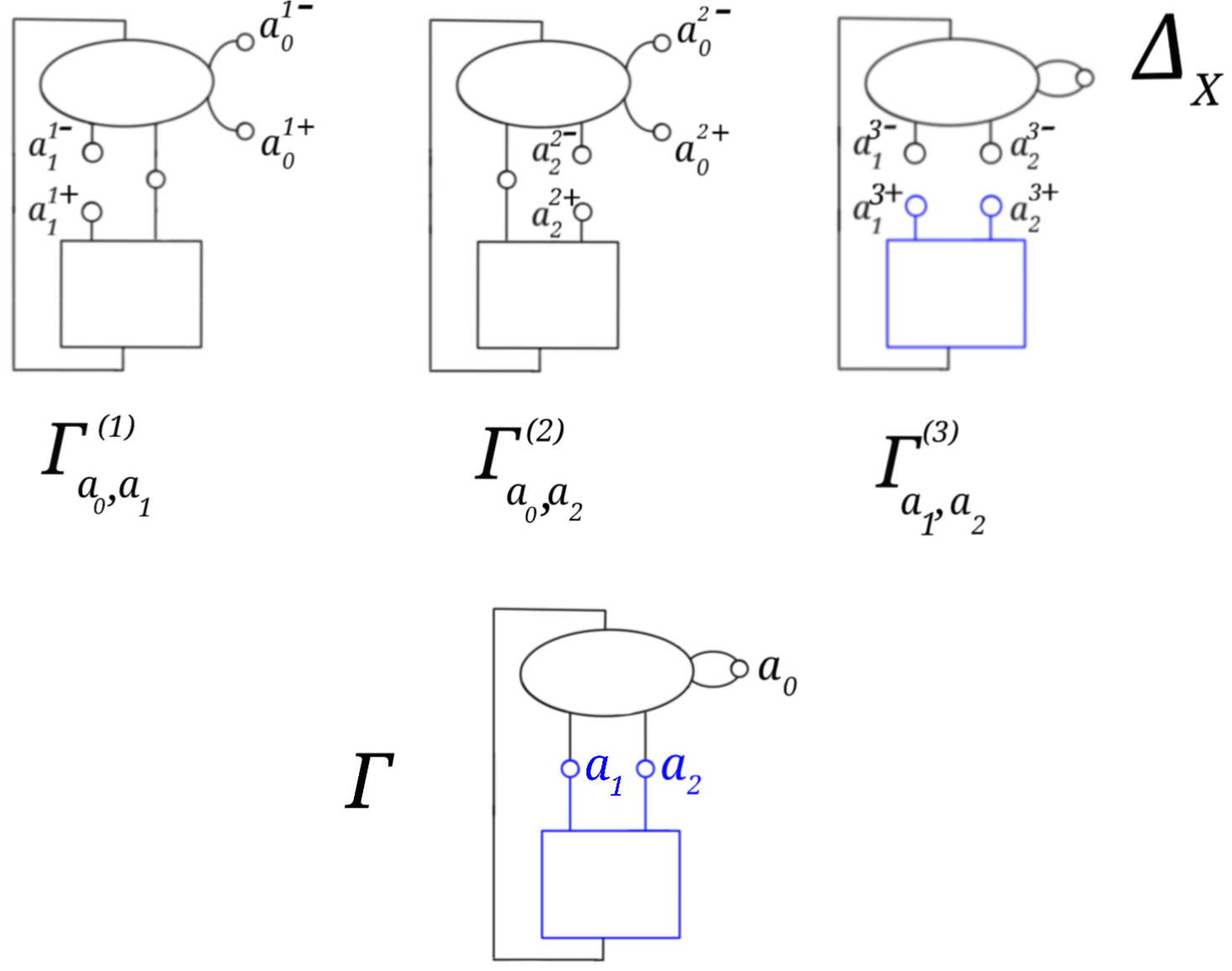}
\caption{The graph $\Delta_X$.}\label{fig:deltitouille}
\end{figure}

We have that in this case, $\Delta_X$ decomposes as 

$$\Delta_X=\Gamma^{(1)}_{a^1_0,a^1_1}\sqcup\Gamma^{(2)}_{a^2_0,a^2_2}\sqcup\Gamma^{(3)}_{a^3_1,a^3_2}$$
(see Figure \ref{fig:deltitouille}). The graphs $\Gamma^{(1)}_{a^1_0,a^1_1}$ and $\Gamma^{(2)}_{a^2_0,a^2_2}$ are connected and $j^X((a_0^1)^{\pm})=j^X((a_0^2)^{\mp})$ by definition of surgery, we conclude that 
$$G':=j^X(\Gamma^{(1)}_{a^1_0,a^1_1}\cup\Gamma^{(2)}_{a^2_0,a^2_2})$$
is connected and contains $4$ $b_1$-vertices which consists on two copies of $a_1$ and two copies of $a_2$ (in Figure \ref{fig:deltitouille}, this graph is obtained after glueing the first two graphs along the vertices $(a_0^{i})^\pm$). Arguing as in the first case, we can consider $j_0:G\to\Gamma^{(3)}_{a^3_1,a^3_2}$ the natural embedding of $G$ in $\Gamma^{(3)}_{a^3_1,a^3_2}$ (its image is the blue subgraph of $\Delta_X$ in Figure \ref{fig:deltitouille} ) and $j=j^X\circ j_0$. Define $\hat{G}=j(G)\cup G'$ and notice that $j(G)$ and $G'$ meet exactly at $j(a_1)$ and $j(a_2)$. Since $B$ is an $h_4$-piece meeting $\iota(H)$ at $\{v_1,v_2\}$, we can define a collapse $\hat f:\hat G\to H'$ sending $G'$ to $B$ and satisfying $\iota\circ f=\hat f\circ j$. Finally, notice there exists a $(1:1)$ lift $\hat G_0$ of $G_0$ in $j^X(\Gamma^{(3)}_{a^3_1,a^3_2})$ (we refer to Figure \ref{fig:starwars}). This finishes the proof of the Lemma.

\begin{figure}[h!]
\centering
\includegraphics[scale=0.12]{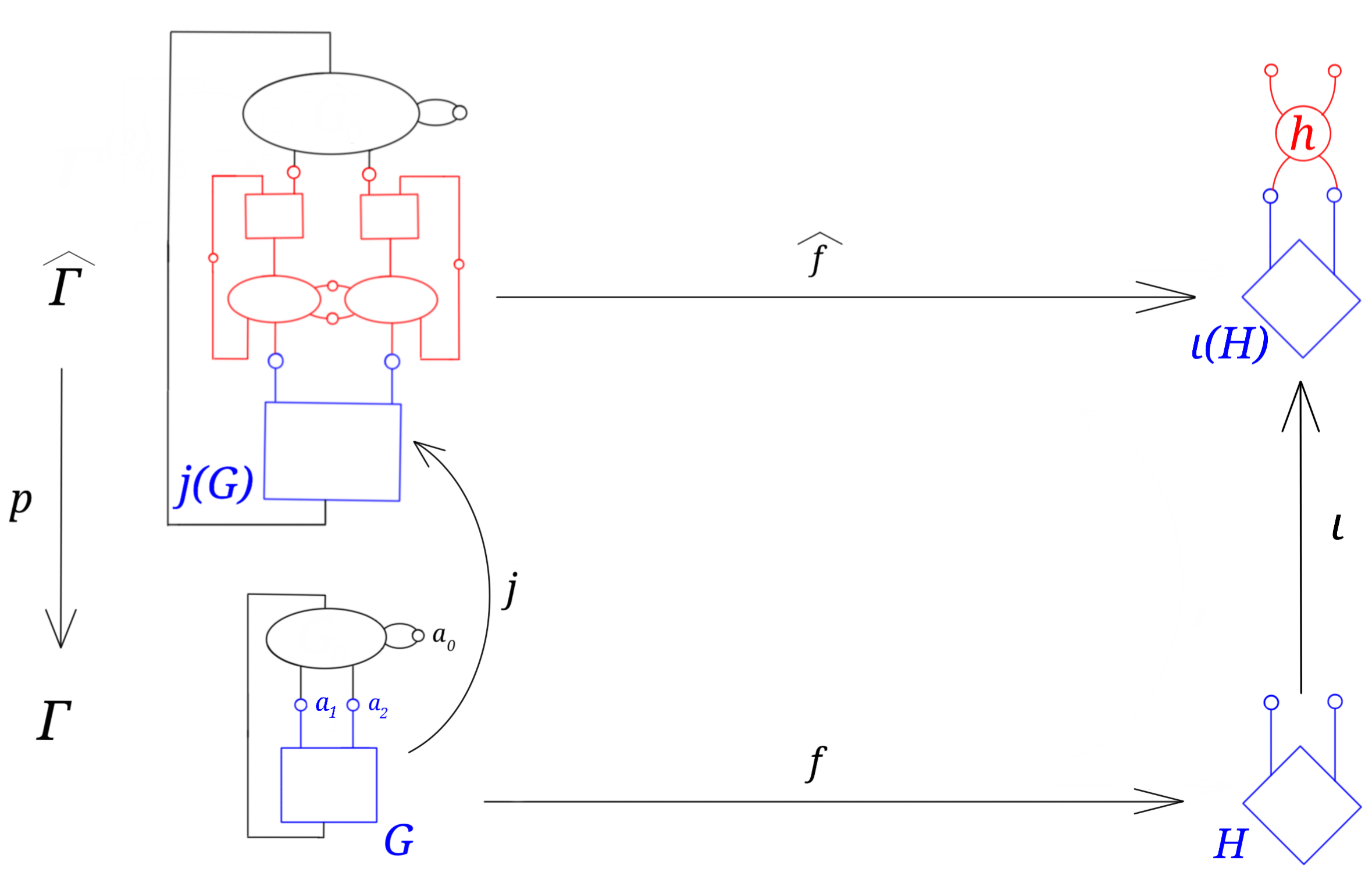}
\caption{Elementary realization in Case 3. We realize the operation of attaching an $h_4$-piece to $H$ by performing a $3$-fold covering of $\Gamma$ and attaching to the blue lift of $G$ the red graph, which has nontrivial homology, at a pair of boundary vertices.}\label{fig:starwars}
\end{figure}

\end{proof}

\begin{lemma}[Replicate]\label{l.multiplelift} Consider $\Gamma$ a finite and connected covering space of the figure eight $C$-graph. Assume that $G_1,\ldots,G_n$ is a family of pairwise disjoint $C$-subgraphs of $\Gamma$ and let $m_1,\ldots,m_n$ be integers satisfying $m_k\geq 1$ for $k=1,\ldots,n$. 

Then, there exists a finite covering $p:\hat\Gamma\to \Gamma$ with a family of different $(1:1)$ lifts $$j_k^l:G_k\to \hat\Gamma$$ where $1\leq k\leq n$ and $1\leq l\leq m_k$. 
\end{lemma}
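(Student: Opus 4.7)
The plan is to start with $M := \max_k m_k$ disjoint copies of $\Gamma$ and iteratively glue them into a single connected covering via surgery operations that preserve the desired lifts. Let $\Delta_0 = \bigsqcup_{i=1}^M \Gamma^{(i)}$, equipped with the natural $M$-sheeted covering $q_0 : \Delta_0 \to \Gamma$. For every $k$ and every $l \in \{1, \ldots, m_k\}$, let $j_k^l : G_k \to \Gamma^{(l)} \hookrightarrow \Delta_0$ be the embedding into the $l$-th copy; these are automatically $m_k$ pairwise disjoint $(1:1)$ lifts of $G_k$ inside $\Delta_0$, and the task is to glue $\Delta_0$ into a connected cover $\hat{\Gamma}$ while preserving all of them.

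The key observation is that a surgery along a pair $\{v_a, v_b\}$ of preimages of a $b_2$-vertex $v \in \Gamma$ leaves every intended lift intact as soon as $v$ has at least one adjacent edge outside $\bigcup_k G_k$. Call such a $v$ a \emph{safe} vertex. Indeed, following the construction recalled in Section \ref{ss.surgery}, the vertices $v_a$ and $v_b$ are split and recombined into two new $b_2$-vertices $A, B$, each adjacent to exactly one of the two edges at $v$; when at most one of these two edges lies in some $G_k$, the two lifts $G_k^{(a)}$ and $G_k^{(b)}$ in the new cover inherit exactly one of $A, B$ each, and hence remain disjoint $(1:1)$ lifts of $G_k$. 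Lifts of the other subgraphs $G_{k'}$ are unaffected, since by pairwise disjointness of the family $(G_k)_k$ the vertex $v$ belongs to at most one of them.

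Given a safe $b_2$-vertex $v$, we perform $M-1$ successive surgeries along pairs of preimages of $v$ to glue $\Gamma^{(1)}$ to $\Gamma^{(2)}$, then the resulting component to $\Gamma^{(3)}$, and so on. At each step $i \leq M-1$ we pick a preimage of $v$ in the current connected component containing $\Gamma^{(1)}, \ldots, \Gamma^{(i)}$ and one in $\Gamma^{(i+1)}$, and perform the surgery; this is well-defined because surgery preserves the total number of preimages of every vertex of $\Gamma$, and because the new $b_2$-vertices created by the previous surgeries are themselves safe (each of them inherits an edge outside $\bigcup_k G_k$ from its parent). After $M-1$ steps $\hat{\Gamma}$ is connected, and the $m_k$ intended lifts of each $G_k$ have survived the process, yielding the required family $\{j_k^l\}$.

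The main obstacle is thus to exhibit a safe $b_2$-vertex of $\Gamma$. By pairwise disjointness of the $G_k$'s, any edge of $\Gamma$ belongs to at most one of them; moreover, since $\Gamma$ is a finite connected covering of the figure eight $C$-graph — in which every edge is adjacent to a $b_2$-vertex — every edge of $\Gamma$ has a $b_2$-endpoint. A safe $b_2$-vertex therefore exists as soon as $\bigcup_k G_k$ does not contain every edge of $\Gamma$, which is the only non-trivial verification needed and is easily ensured in the regime where the lemma is applied (finite $C$-subgraphs inside a sufficiently large covering).
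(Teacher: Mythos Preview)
Your approach is correct but differs from the paper's. The paper takes $N=\sum_k m_k$ disjoint copies of $\Gamma$, finds a $b_2$-vertex $a$ lying \emph{entirely outside} $\bigcup_i G_i$, and performs a single cyclic surgery along the $N$ copies of $a$; since $a\notin\bigcup_i G_i$, each of the $N$ copies of $\Gamma_a$ carries an undisturbed $(1{:}1)$ lift of every $G_k$, and one simply picks $m_k$ of them. Your version is more economical in sheets ($M=\max_k m_k$ instead of $N$) and requires only the weaker ``safe'' condition on $v$, at the cost of an iterative argument tracking how the lifts survive successive surgeries. Both routes are valid; the paper's is shorter, yours would matter if one cared about the degree of the covering.

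There is, however, a genuine gap in your last paragraph: you defer the existence of a safe $b_2$-vertex to ``the regime where the lemma is applied'' rather than proving it. This should be completed, and it is easy. If every edge of $\Gamma$ lies in $\bigcup_k G_k$, then by disjointness every edge lies in a unique $G_k$; the $C$-subgraph condition then forces all four edges at each $s$-vertex to lie in the same $G_k$, and disjointness forces both edges at each $b$-vertex to lie in the same $G_k$, so each $G_k$ is open and closed in $\Gamma$. Connectedness of $\Gamma$ gives $n=1$ and $G_1=\Gamma$, a degenerate case (trivial for $m_1=1$, and the statement fails for $m_1\geq 2$ since a connected nontrivial cover admits no global section). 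The paper's proof has the same tacit hypothesis, hidden in the sentence ``since $\Gamma$ is connected and the subgraphs $G_i$ are disjoint, there exists a vertex $a$ not belonging to any of the $G_i$''.
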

\begin{proof} Since $\Gamma$ is connected and the subgraphs $G_i$ are disjoint, there exists a vertex $a$ not belonging to any of the $G_i$. We are going to define a new covering with a slight variation of the surgery operation defined in \S\ref{ss.surgery}. Set $N=m_1+\ldots+m_n$. Then, define $$\Delta=\bigsqcup_{i=1}^{N}\Gamma^{(i)}$$ and consider $q:\Delta\to\Gamma$ the associated covering. Denote $X=\{a_1,\ldots,a_N\}$ where $a_{i}$ is the copy of $a$ in $\Gamma^{(i)}$. In this case $\Delta_X=\bigsqcup_{i=1}^{N}\Gamma^{(i)}_{a_{i}}$ where each $\Gamma^{(i)}_{a_{i}}$ contains two copies of $a_i$ that we denote $a_i^{+}$ and $a_i^{-}$. In this case we define $\Delta^X$ as the quotient of $\Delta_X$ under the equivalence relation generated by $$a^+_i\sim a^-_{i+1}\mod N\mbox{ with }i=1,\ldots,N.$$ 

Denote $j^X:\Delta_X\to\Delta^X$ the quotient map. Note that $q\circ j_X$ factors through $j^X$ defining a finite and connected covering $p:\Delta^X\to\Gamma$ which consists of the ``cyclic'' glueing of $N$ copies of $\Gamma_a$ (see Figure \ref{fig:replicouille}). Then, label these copies as $$\Delta^X=\bigcup_{1\leq k\leq n; 1\leq l\leq m_k}C_{k,l}$$ Since $a\notin\bigcup_{i=1}^{N} G_i$ there exists a $(1:1)$ lift of $j^l_k:G_k\to C_{k,l}$ for every $1\leq k\leq n; 1\leq l\leq m_i$. This finishes the proof of the Lemma. \end{proof}

\begin{figure}[h!]
\centering
\includegraphics[scale=0.1]{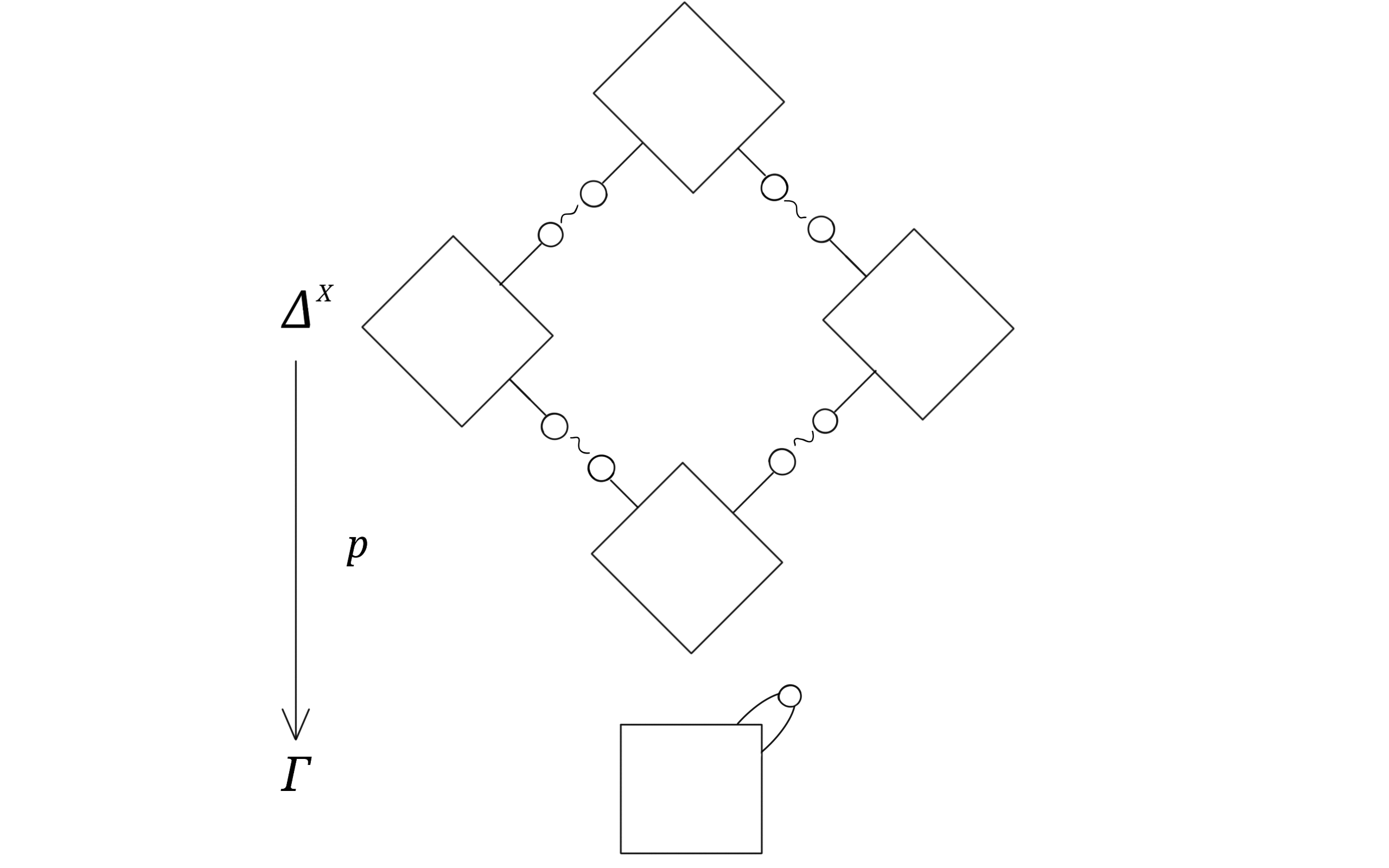}
\caption{Here the vertex $a$ is pictured in white. The graph $\Delta^X$ is obtained by cyclic surgery of various copies of $\Gamma$ along copies of $a$.}\label{fig:replicouille}
\end{figure}

\subsection{The main Lemma} When the $C$-inclusions in a forest of $C$-graphs are elementarily decomposable, we say that we have an \emph{elementarily decomposable forest}. The main Lemma says that (under some assumptions), elementarily decomposable forests can be realized in towers. To prove this Lemma we first need to \emph{decompose} the forest. After the decomposition we will use the previous lemmas on surgeries to realize our decomposed forest through an inductive process.
\paragraph{\textbf{Compositions and decompositions }}Consider a strictly increasing map $\sigma:\N\to\N$ and a forest $\cT=(V(\cT),E(\cT))$. Then, we define the $\sigma$-\emph{composition} of $\cT$ as the forest $\cT_\sigma=(V(\cT_\sigma),E(\cT_\sigma))$ where 

\begin{itemize} 
\item $V(\cT_{\sigma})=\sqcup_{n\in\N}V_{\sigma(n)}\cT$ and 
\item $E_n(\cT_\sigma)=\{\rho:\rho \text{ is a path in }\cT\text{ joining }V_{\sigma(n)}\cT\text{ and }V_{\sigma(n+1)}\cT\}$
\end{itemize}

Also, given a forest of $C$-graphs $\mathcal{H}=(\mathcal{T},\{H_v\}_{v\in V(\mathcal{T})},\{\iota_e\}_{e\in E(\mathcal{T})})$ we define the $\sigma$-composition of $\mathcal{H}$ as the forest 
$$\mathcal{H}_\sigma=(\mathcal{T}_\sigma,\{H_v\}_{v\in V(\mathcal{T}_\sigma)},\{\iota_\rho\}_{\rho\in E(\mathcal{T}_\sigma)})$$
with $\iota_\rho:=\iota_{e_k}\circ\ldots\circ \iota_{e_1}$ where $\rho=e_1\ldots e_k$ belongs to $E(\cT_\sigma)$. 

If for some increasing map $\sigma$ we have that $\mathcal{H}_1$ is a $\sigma$-composition of $\mathcal{H}_2$ we say that $\mathcal{H}_2$ is a \emph{decomposition} of $\mathcal{H}_1$. 

The proof of the following proposition follows directly from the definitions. 
\begin{proposition}\label{p.decomposition} Consider an elementarily decomposable forest of $C$-graphs\\ $\mathcal{H}=(\cT,\{H_v\}_{v\in V(\cT)},\{\iota_e\}_{e\in E(\cT)})$. Then, there exists a decomposition of $\mathcal{H}$ denoted by $\mathcal{E}=(\cT',\{H_v\}_{v\in V(\cT')},\{\iota_e\}_{e\in E(\cT')})$ so that for every $n\geq 0$ one of the following holds
\begin{itemize}
\item either all $C$-inclusions in $(\iota_e)_{e\in E_n(\cT')}$ are bijective, or
\item $o(e)\neq o(e')$ whenever $e,e'$ are different edges in $E_n(\cT')$ and there exists $e_{\ast}\in E_n(\cT')$ such that \begin{itemize}
\item $j_{e_{\ast}}$ is an elementary $C$-inclusion
\item $j_e$ is bijective for $e\in E_n(\cT')\setminus \{e_{\ast}\}$
\end{itemize}
\end{itemize}

\end{proposition}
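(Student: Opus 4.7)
The plan is to refine $\mathcal{H}$ by inserting intermediate floors into $\cT$ so that, at every floor of the refined forest $\mathcal{E}$, either every $C$-inclusion is bijective (branching allowed only in this case) or a single edge carries an elementary $C$-inclusion while every other edge is a bijective identity on an unbranched vertex. Writing each $\iota_e = \iota_e^{k_e}\circ\cdots\circ\iota_e^1$ as a composition of elementary $C$-inclusions reduces the task to scheduling the $k_e$ elementary steps across all edges $e\in E_n(\cT)$ without ever mixing branching with an elementary step at the same floor.

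I would build $\mathcal{E}$ inductively: assuming floor $N_n$ of $\cT'$ is in natural bijection with $V_n(\cT)$ carrying the same $C$-graphs, first insert a \emph{branching floor} where every $v\in V_n(\cT)$ with outgoing edges $e^v_1,\dots,e^v_{c_v}\in E_n(\cT)$ spawns $c_v$ bijective identity copies of $H_v$, one per outgoing edge (matching the first bullet). Then insert $K_n=\sum_{e\in E_n(\cT)} k_e$ \emph{elementary floors}: at each one, exactly one vertex (representing some chosen $e_i$) undergoes the next elementary step $\iota_{e_i}^j$ of its decomposition while every other vertex extends by the identity; no branching occurs and the second bullet is met. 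After these $K_n+1$ floors the vertices are in natural bijection with $E_n(\cT)$, hence with $V_{n+1}(\cT)$, equipped with the $C$-graphs $H_{t(e)}$, which closes the induction with $N_{n+1}=N_n+1+K_n$.

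Setting $\sigma(n)=N_n$ then identifies $\mathcal{H}$ with $\mathcal{E}_\sigma$: each path in $\cT'$ from $V_{N_n}(\cT')$ to $V_{N_{n+1}}(\cT')$ consists of one identity branching step followed by the full elementary decomposition of exactly one $\iota_e$, so composition along the path recovers $\iota_e$. The only subtlety is the forced separation of branching from elementary operations, which is what dictates the two-phase structure above; once this separation is enforced, both clauses of the disjunction hold by construction, confirming the remark that the proposition is essentially a direct consequence of the definitions.
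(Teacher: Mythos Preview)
Your proposal is correct and is exactly the natural construction the paper has in mind; the paper itself gives no proof beyond the sentence ``follows directly from the definitions,'' and your two-phase scheme (a single bijective branching floor followed by a sequence of one-elementary-step floors) is precisely the intended unpacking. The only point worth noting is that your identification of the top of the block with $V_{n+1}(\cT)$ uses the forest axiom that every vertex above floor~$0$ is the terminal vertex of exactly one edge, which you rely on implicitly when passing from $E_n(\cT)$ to $V_{n+1}(\cT)$.
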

In this case we say that $\mathcal{E}$ an \emph{elementary decomposition} of $\mathcal{H}$.

\begin{remark}\label{r.includecomp} It is straightforward to check that if a forest of $C$-graphs $\mathcal{H}$ is realized in a tower $\mathbb{U}=\left\{q_n:\Gamma_{n+1}\to \Gamma_n\right\}$ and $\sigma:\N\to\N$ is a strictly increasing map then, $\mathcal{H}_\sigma$ is included in the tower $\mathbb{U}_{\sigma}=\left\{q^\sigma_{n}:\Gamma_{\sigma(n+1)}\to \Gamma_{\sigma(n)}\right\}$ where $q^\sigma_{n}=q_{\sigma(n)}\circ\ldots\circ q_{\sigma(n+1)-1}$.

\end{remark}

\begin{lemma}[Main Lemma]\label{l.main} Consider an elementarily decomposable forest \\$\mathcal{H}=(\mathcal{T},\{H_v\}_{v\in V(\mathcal{T})},\{\iota_e\}_{e\in E(\mathcal{T})})$. Assume that there exists $\Gamma_0$ a finite covering space of the figure eight $C$-graph together with: \begin{itemize} 
\item $\{G_v:v\in V_0(\mathcal{T})\}$ a disjoint family of subgraphs of $\Gamma_0$ and
\item $\{f_v:G_v\to H_v:v\in V_0(\mathcal{T})\}$ a family of collapses.
\end{itemize}
Then $\mathcal{H}$ can be realized in a tower.

\end{lemma}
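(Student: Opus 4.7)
The plan is to reduce to an elementary decomposition via Proposition~\ref{p.decomposition} and then build the tower inductively, performing exactly one ``non-trivial'' covering step at each floor using the surgery lemmas. Concretely, I would first replace $\mathcal{H}$ by an elementary decomposition $\mathcal{E}=(\mathcal{T}',\{H_v\},\{\iota_e\})$; since $V_0(\mathcal{T}')=V_0(\mathcal{T})$ the hypothesized base data serves as the base of an induction, and by Remark~\ref{r.includecomp} any realization of $\mathcal{E}$ yields a realization of $\mathcal{H}$ in an associated sub-tower. I then proceed by induction on the floor $n$: assume that the partial tower $q_0,\dots,q_{n-1}$, together with disjoint $C$-subgraphs $\{G_v\subseteq \Gamma_n:v\in V_n(\mathcal{T}')\}$, collapses $f_v:G_v\to H_v$, and $(1:1)$-lifts $j_e$ realizing all previous floors of $\mathcal{E}$, has been constructed. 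By Proposition~\ref{p.decomposition} one of two cases occurs at floor $n$.

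In the \emph{bijective} case, for each $v\in V_n(\mathcal{T}')$ let $m_v\geq 1$ be its number of outgoing edges and apply Lemma~\ref{l.multiplelift} to $\{G_v\}_v$ with multiplicities $\{m_v\}$. This produces $q_n:\Gamma_{n+1}\to \Gamma_n$ with pairwise disjoint $(1:1)$-lifts $j^{\ell}_v$; assign one lift to each outgoing edge $e$, set $G_{t(e)}:=j^{\ell}_v(G_v)$, $j_e:=j^{\ell}_v$, and $f_{t(e)}:=\iota_e\circ f_v\circ (j_e|_{G_v})^{-1}$, which is a collapse because the bijective $C$-inclusion $\iota_e$ is an isomorphism of $C$-graphs. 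In the \emph{elementary} case, there is a unique $e_\ast\in E_n(\mathcal{T}')$ with $\iota_{e_\ast}$ elementary, the remaining $\iota_e$ bijective, and pairwise distinct origins. Applying Lemma~\ref{l.bblock} with $\Gamma=\Gamma_n$, $G=G_{v_\ast}$ (where $v_\ast=o(e_\ast)$), $G_0=\bigsqcup_{v\neq v_\ast}G_v$, $f=f_{v_\ast}$ and $\iota=\iota_{e_\ast}$ yields $q_n:\Gamma_{n+1}\to\Gamma_n$, a $C$-inclusion $j:G_{v_\ast}\to \hat G$ and a collapse $\hat f:\hat G\to H_{t(e_\ast)}$ satisfying $\iota_{e_\ast}\circ f_{v_\ast}=\hat f\circ j$; meanwhile $\hat G_0$ splits as a disjoint union of $(1:1)$-lifts of the individual $G_v$ with $v\neq v_\ast$, which I use exactly as in the bijective case to define the remaining $G_{t(e)},j_e,f_{t(e)}$. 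This closes the induction.

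The main obstacle I foresee is the hypothesis $\beta_1(\Gamma_n)\geq 3$ required by Case~3 of Lemma~\ref{l.bblock} (the attachment of an $h_4$-piece). A connected degree-$d$ cover of the figure eight $C$-graph has $\beta_1=d+1$, and each application of Lemma~\ref{l.multiplelift} or Lemma~\ref{l.bblock} strictly increases the degree over $\Gamma_0$, so the hypothesis becomes automatic after finitely many steps. Should it fail at an early floor, I would simply insert one auxiliary covering step (e.g.\ Lemma~\ref{l.multiplelift} with all multiplicities equal to one) to boost $\beta_1$, lifting all subgraphs and collapses through the resulting $(1:1)$-lifts before invoking Lemma~\ref{l.bblock}. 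Such an insertion refines the tower without altering the forest data, and by Remark~\ref{r.includecomp} the resulting tower still realizes $\mathcal{H}$.
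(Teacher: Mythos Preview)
Your proposal is correct and follows essentially the same approach as the paper: pass to an elementary decomposition via Proposition~\ref{p.decomposition}, invoke Remark~\ref{r.includecomp}, and then induct floor by floor, using Lemma~\ref{l.multiplelift} in the purely bijective case and Lemma~\ref{l.bblock} in the case with a single elementary edge. Your treatment of the hypothesis $\beta_1(\Gamma_n)\geq 3$ needed for Case~3 of Lemma~\ref{l.bblock} is actually more careful than the paper's, which applies the lemma without comment; your observation that a degree-$d$ cover of the figure eight has $\beta_1=d+1$, together with the option of inserting an auxiliary nontrivial cover if needed, handles this cleanly.
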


\begin{proof} Let $\mathcal{H}=(\mathcal{T},\{H_v\}_{v\in V(\mathcal{T})},\{\iota_e\}_{e\in E(\mathcal{T})})$ be an elementarily decomposable forest and $\mathcal{E}=(\cT',\{H_v\}_{v\in V(\cT')},\{\iota_e\}_{e\in E(\cT')})$ an elementary decomposition of $\mathcal{H}$ given by Proposition \ref{p.decomposition}. By Remark \ref{r.includecomp}, in order to show that $\mathcal{H}$ can be realized in a tower, it is enough to show it for $\mathcal{E}$. 

A realization of $\mathcal{E}$ up to level $n$ is defined as the following data:
\begin{itemize}
\item finite coverings $\{q_i:\Gamma_{i+1}\to\Gamma_i:i=0,\ldots,n-1\}$;
\item for each $i\leq n$, a family $\{G_v:v\in V_i(\mathcal{T}')\}$ of disjoint subgraphs of $\Gamma_i$;
\item for each $i\leq n-1$, a family of $C$-inclusions $\{j_e:G_{o(e)}\to G_{t(e)}:e\in E_i(\cT')\}$ satisfying $q_i\circ j_e=id$ and 
\item a family of collapses $\{f_{v}:G_v\to H_v:v\in E_i(\mathcal{T}'),i\leq n\}$ satisfying\\ $f_{t(v)}\circ j_e=i_e\circ f_{o(e)}$ for every $e\in E_i(\cT')$ with $i\leq n-1$.
\end{itemize}

We are going to prove that any realization of $\mathcal{E}$ up to level $n$ can be extend to a realization up to level $n+1$. Since the hypothesis of the Lemma implies that $\mathcal{E}$ can be realized up to level $0$, the Lemma will follow by induction. 

For this, assume that $\mathcal{E}$ can be realized up to level $n$. By Proposition \ref{p.decomposition} we need to distinguish in two cases.\\

\noindent\emph{Case 1. For every $e\in E_n(\cT')$, $j_e$ is bijective}.  

In that case, let $m_v=\#\{e\in E_n(\cT'):o(e)=v\}$. Notice that Case 1, we just need to construct a finite covering $q_n:\Gamma_{n+1}\to\Gamma_n$ containing $m_v$ disjoint $(1:1)$ lifts of $G_v$ for each $v\in V_n(\cT')$. The existence of such covering follows directly from Lemma \ref{l.multiplelift} which allows to replicate these subgraphs. Denote $\{j_e:G_{o(e)}\to G_{t(e)};e\in E_n(\cT')\}$ the given family of lifts under $q_n$. Finally define $f_{t(e)}:=j_e\circ f_{o(e)}\circ (q_n|_{G_{t(e)}})$ for every $e\in E_n(\cT')$.\\

\emph{Case 2. For every pair of different edges $e,e'$ in $E_n(\cT')$, we have $o(e)\neq o(e')$ and moreover there exists} $e_{\ast}\in E_n(\cT')$ \textit{such that} \begin{itemize}
\item $j_{e_{\ast}}$ \textit{is an elementary $C$-inclusion}
\item $j_e$ \textit{is bijective for} $e\in E_n(\cT')\setminus \{e_{\ast}\}$
\end{itemize}
Let $G_0:=\bigsqcup_{e\in E_n(\cT')\setminus \{e_{\ast}\}}G_{o(e)}$. In  Case 2, we must construct a finite covering $q_n:\Gamma_{n+1}\to\Gamma_n$ together with \begin{itemize}
\item $G_{t(e_{\ast})}$ and $\hat G_0:=\bigsqcup_{e\in E_n(\cT')\setminus \{e_{\ast}\}}G_{t(e)}$ subgraphs of $\Gamma_{n+1}$
\item $j_{e_\ast}:G_{o(e_\ast)}\to G_{t(e_{\ast})}$ and $j_0:G_0\to\hat G_0$, $(1:1)$ lifts under $q_n$
\item a collapse $f_{t(e_\ast)}: G_{t(e_\ast)}\to H_{t(e_{\ast})}$
\end{itemize}
that satisfy $f_{t(e_\ast)}\circ j_{e_\ast}=\iota_{e_\ast}\circ f_{o(e_\ast)}$ and that $j_0$ is a $(1:1)$ lift. This follows directly from Lemma \ref{l.bblock} setting:
\begin{itemize} 
\item $G_0:=G_0$ and $G:=G_{o(e_{\ast})}$, 
\item $f:=f_{o(e_{\ast})}$ and $\hat f=f_{t(e_\ast)}$,
\item $\iota:=\iota_{e_{\ast}}$ and $j=j_{e_\ast}$ 
\end{itemize} 
This finishes the proof of the Lemma. 
\end{proof}

\section{Proof of Theorem \ref{t.grafos}}\label{s.proof} Recall that a pair $(\cE_0,\cE)$ satisfies condition $(\ast)$ if $\cE_0$ is a closed subset of $\cE$, $\cE$ is a compact and totally disconnected metrizable space and $\cE_0$ contains the isolated points of $\cE$. 

First, we prove the following weak version of Theorem \ref{t.grafos}:

\begin{proposition}\label{p.infinite} There exists a tower $\mathbb{U}=\{q_n:\Gamma_{n+1}\to\Gamma_n\}$ whose inverse limit $\mathcal{M}$ satisfies:
\begin{itemize}
\item its generic leaf is a tree;
\item given any pair $\tau=(\cE_0,\cE)$ satisfying condition $(\ast)$, there exists a leaf of $\mathcal{M}$ whose ends pair is equivalent to $\tau$.
\end{itemize}\end{proposition}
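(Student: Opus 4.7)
The plan is to combine the universal forest of $C$-graphs produced by Proposition~\ref{p.forestuniv} with the Main Lemma (Lemma~\ref{l.main}) and Proposition~\ref{p.include}. First I would invoke Proposition~\ref{p.forestuniv} to obtain an elementarily decomposable forest of $C$-graphs $\mathcal{H}=(\cT,\{H_v\}_{v\in V(\cT)},\{\iota_e\}_{e\in E(\cT)})$ whose limits $\{H^{\alpha}\}_{\alpha\in\cE(\cT)}$ realize, up to equivalence, every ends pair satisfying condition $(\ast)$. I would require two additional features to be built into $\mathcal{H}$: each $C$-inclusion $\iota_e$ maps $H_{o(e)}$ into $\Int(H_{t(e)})$, so that Proposition~\ref{p.include} applies; and at least one end $\alpha_0\in\cE(\cT)$ satisfies $\cE_0(H^{\alpha_0})=\vide$, for instance with ends pair $(\vide,\cE)$ for $\cE$ a Cantor set (which satisfies condition $(\ast)$ vacuously, as $\cE$ has no isolated points). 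Since $V_0(\cT)$ is finite and each $H_v$ is a finite $C$-graph, the initial data required by Lemma~\ref{l.main}, namely a finite covering $\Gamma_0$ of the figure eight $C$-graph carrying disjoint subgraphs $\{G_v:v\in V_0(\cT)\}$ together with collapses $f_v:G_v\to H_v$, can be arranged inside a single sufficiently large finite cover by an application of Lemma~\ref{l.multiplelift} to replicate the needed building blocks.

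Once the initial data is in place, Lemma~\ref{l.main} supplies a tower $\mathbb{U}=\{q_n:\Gamma_{n+1}\to\Gamma_n\}$ in which $\mathcal{H}$ is realized. Let $\cM$ denote its inverse limit. Because the inclusions $\iota_e$ land in interiors, Proposition~\ref{p.include} furnishes, for every $\alpha\in\cE(\cT)$, a leaf $\cL^{\alpha}$ of $\cM$ whose ends pair $(\cE_0(\cL^{\alpha}),\cE(\cL^{\alpha}))$ is equivalent to $(\cE_0(H^{\alpha}),\cE(H^{\alpha}))$. By the universality built into $\mathcal{H}$, every ends pair satisfying condition $(\ast)$ thus arises as the ends pair of some leaf of $\cM$; this establishes the second bullet.

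For the first bullet, by Proposition~\ref{p.generic_tree} it suffices to exhibit a single leaf of $\cM$ with finite first Betti number. The leaf $\cL^{\alpha_0}$ associated to the preferred end satisfies $\cE_0(\cL^{\alpha_0})=\vide$: every end of $\cL^{\alpha_0}$ admits an exhaustion-neighborhood with trivial first homology, i.e. a tree neighborhood. A standard compactness argument on the (compact) space of ends then yields a finite subgraph $K\subseteq \cL^{\alpha_0}$ such that every connected component of $\cL^{\alpha_0}\setminus K$ is a tree; consequently $\beta_1(\cL^{\alpha_0})=\beta_1(K)<\infty$, and Proposition~\ref{p.generic_tree} forces the generic leaf of $\cM$ to be a tree.

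The main obstacle in this program is the construction of the universal forest in Proposition~\ref{p.forestuniv}: one has to assemble a single forest whose ends simultaneously exhaust, up to equivalence, all ends pairs satisfying condition $(\ast)$, while keeping each floor finite, each $C$-inclusion elementarily decomposable, and each inclusion landing in the interior of its target. The present proposition treats this combinatorial work as a black box; once Proposition~\ref{p.forestuniv} is available, everything else reduces to a mechanical application of Lemma~\ref{l.main}, Proposition~\ref{p.include}, and Proposition~\ref{p.generic_tree}.
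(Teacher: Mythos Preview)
Your proposal is correct and follows essentially the same route as the paper: invoke Proposition~\ref{p.forestuniv}, initialize the Main Lemma~\ref{l.main}, and then apply Proposition~\ref{p.include} and Proposition~\ref{p.generic_tree}. Two small remarks: for the initialization, Lemma~\ref{l.multiplelift} only replicates subgraphs already present in a cover and does not by itself produce subgraphs admitting collapses onto the three basic pieces; the paper instead exhibits an explicit $7$-fold cover of the figure eight $C$-graph containing disjoint subgraphs collapsing onto an $h_2$-, an $h_4$-, and an $s$-piece (this is easy, but it is the actual content of the initialization step). On the other hand, your compactness argument showing that $\cE_0(\cL^{\alpha_0})=\vide$ forces $\beta_1(\cL^{\alpha_0})<\infty$ makes explicit a step the paper leaves implicit when invoking Proposition~\ref{p.generic_tree}.
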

Notice that the only missing classifying triples in Proposition \ref{p.infinite} are those of the form $(g,\emptyset,\cE)$. Also notice that by condition $(\ast)$, those classifying triples must satisfy that $\cE$ is perfect and therefore homeomorphic to a Cantor set. After proving Proposition \ref{p.infinite}, we will show how to modify the construction in order to realize also these countably many missing triples. 

In order to prove Proposition \ref{p.infinite} we need another Proposition whose proof will be postponed until the next section.
\begin{proposition}\label{p.forestuniv} There exists an elementarily decomposable forest of $C$-graphs $\mathcal{F}=(\mathcal{T},\{H_v\}_{v\in V(\mathcal{T})},\{\iota_e\}_{e\in E(\mathcal{T})})$ satisfying:
\begin{itemize}
\item $V_0(\cT)$ consists of three vertices, that we denote by $v_1,v_2$ and $v_3$; moreover, $H_{v_1}$ is an $h_2$-piece, $H_{v_2}$ is an $s$-piece and $H_{v_3}$ is an $h_4$-piece;
\item $\iota_e(H_{o(e)})\subseteq \Int(H_{t(e)})$ for every $e\in E(\cT)$;
\item for every pair $\tau=(\cE_0,\cE_1)$ satisfying condition $(\ast)$, there exists $\alpha\in\cE(\cT)$ such that $(\cE_0(H^\alpha),\cE(H^\alpha))$ is equivalent to $\tau$. 
\end{itemize}
\end{proposition}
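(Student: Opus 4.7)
The plan is to construct the forest $\mathcal{F}$ by combining two ingredients: a realization lemma producing, for each admissible end pair, a chain of elementary $C$-inclusions whose direct limit realizes it, and a packaging scheme that organizes all such chains into a single forest with the prescribed seed at the zeroth floor.

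First, I would establish a realization lemma of the following form: for every pair $(\cE_0,\cE)$ satisfying condition $(\ast)$, and for each of the three basic pieces $H_{v_i}$ serving as a seed, there exists an increasing sequence of finite $C$-graphs $H_0 \hookrightarrow H_1 \hookrightarrow H_2 \hookrightarrow \cdots$ with $H_0 = H_{v_i}$, where every inclusion is elementary and sends its source into the interior of its target, such that the direct limit has end pair equivalent to $(\cE_0,\cE)$. The construction thickens a locally finite tree whose end space realizes $\cE$ into a $C$-graph: chains of $h_2$-pieces realize linear rays accumulated by homology (needed precisely for ends in $\cE_0$, in particular for isolated ends by condition $(\ast)$, per Remark \ref{r.star}); $s$-pieces provide the valency-$4$ branchings that generate Cantor-type accumulations of ends; and $h_4$-pieces are inserted when an end requires an odd parity of branches, as already illustrated in Figures \ref{fig:evouille} and \ref{fig:forouille}. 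The interior condition $\iota(H_{n}) \subseteq \Int(H_{n+1})$ is enforced by inserting buffer $s$-pieces (or short chains of $b_2$/$h_2$-vertices) at every boundary vertex along the way, which can always be absorbed into an elementary decomposition.

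Second, I would package these chains into a single forest. Set $V_0(\mathcal{T}) = \{v_1,v_2,v_3\}$ with the prescribed seeds. At each subsequent floor $n$, each vertex $v \in V_n(\mathcal{T})$ will have finitely many children, one for each elementary extension from a fixed finite list of local options applied to $H_v$. Different children correspond to different elementary $C$-inclusions (attaching $h_2$-, $s$-, or $h_4$-pieces at different boundary vertices), so following different paths through $\mathcal{T}$ produces different direct limits. Since $V_n(\mathcal{T})$ stays finite while $\mathcal{T}$ is infinite, the end space $\cE(\mathcal{T})$ is a closed subset of a Cantor set, and by choosing the branching appropriately we can make it as large as needed.

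Finally, to ensure that every pair satisfying $(\ast)$ appears, I would run a bookkeeping argument: parametrize the universal family of admissible pairs by a suitably rich tree-structure (thinking of $\cE$ as a closed subset of a Cantor set with a distinguished closed subset $\cE_0$ and reading off, floor by floor, a ``local code'' of the next elementary extension dictated by the realization lemma). The forest $\mathcal{T}$ is then built so that for every admissible target pair $\tau$, at least one end $\alpha$ of $\mathcal{T}$ follows the sequence of extensions prescribed by $\tau$, whence $(\cE_0(H^\alpha),\cE(H^\alpha))$ is equivalent to $\tau$. The main obstacle is the realization lemma: the valency constraints on $C$-graphs impose parity restrictions that make the combinatorics of prescribing a target end pair delicate, especially at ends that are non-isolated but not in $\cE_0$. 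Condition $(\ast)$ is precisely what makes these constraints solvable, since isolated ends are forced into $\cE_0$ and can be realized by pure $h_2$-chains, while non-isolated ends inherit enough branching flexibility from nearby ends to absorb parity mismatches. I expect the technical lemma announced for Section \ref{s.Appendix} to handle this delicate local bookkeeping.
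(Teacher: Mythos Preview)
Your proposal is correct in outline and lands on the same technical core as the paper: the heart of the matter is the realization lemma (building, for each admissible pair $(\cE_0,\cE)$, a $C$-graph with that end pair via an exhaustion by elementary steps), and the paper's version of this is exactly the ``adapted sequence of partitions'' machinery you anticipate from Section~\ref{s.Appendix}.

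Where you and the paper diverge is in the packaging. You propose to build one chain per target pair and then weave them into a forest by explicit bookkeeping. The paper instead defines a family $\cC$ of pointed $C$-graphs (those whose concentric balls $B_G(2n+1)$ grow by elementary attachments) and declares $V_n(\cT)$ to be the set of \emph{isomorphism classes} of $(2n{+}1)$-balls of elements of $\cC$, with edges given by pointing-preserving inclusion. This buys three things for free: finiteness of each floor (only finitely many isomorphism types of a given radius), the three-vertex zeroth floor (the three basic pieces are exactly the radius-$1$ balls), and automatic coverage (every $G\in\cC$ is tautologically a limit along the ray $([B_G(2n+1)])_n$). The realization problem then reduces cleanly to showing $\cC$ is rich enough, i.e.\ every admissible pair is the end pair of some $G\in\cC$---which is where the adapted partitions enter. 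Your approach would also work, but the isomorphism-class trick sidesteps the bookkeeping you describe and makes the interior condition $\iota_e(H_{o(e)})\subseteq\Int(H_{t(e)})$ immediate from $\partial G=\emptyset$ for $G\in\cC$, without inserting buffer pieces.
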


\begin{proof}[Proof of Proposition \ref{p.infinite} using \ref{p.forestuniv}: ]Consider the forest of $C$-graphs $$\mathcal{F}=(\mathcal{T},\{H_v\}_{v\in V(\mathcal{T})},\{\iota_e\}_{e\in E(\mathcal{T})})$$ constructed in Proposition \ref{p.forestuniv}. Notice that $V_{0}(\cT)$ consists of three vertices, that we denote $\{v_{1},v_2,v_3\}$, and that $H_{v_1}$ is an $h_2$-piece, $H_{v_2}$ is an $h_4$-piece and $H_{v_3}$ is an $s$-piece. In order to realize $\mathcal{F}$ in a tower using Lemma \ref{l.main}, we need to construct a finite covering of the figure eight $C$-graph $\Gamma_0$ together with\begin{itemize}
\item $G_{v_1},G_{v_2}$ and $G_{v_3}$ disjoint $C$-subgraphs of $\Gamma_0$ and
\item collapses $f_i:G_{v_i}\to H_{v_i}$ for $i=1,2,3$. 
\end{itemize}

\begin{figure}[h!]
\centering
\includegraphics[scale=0.12]{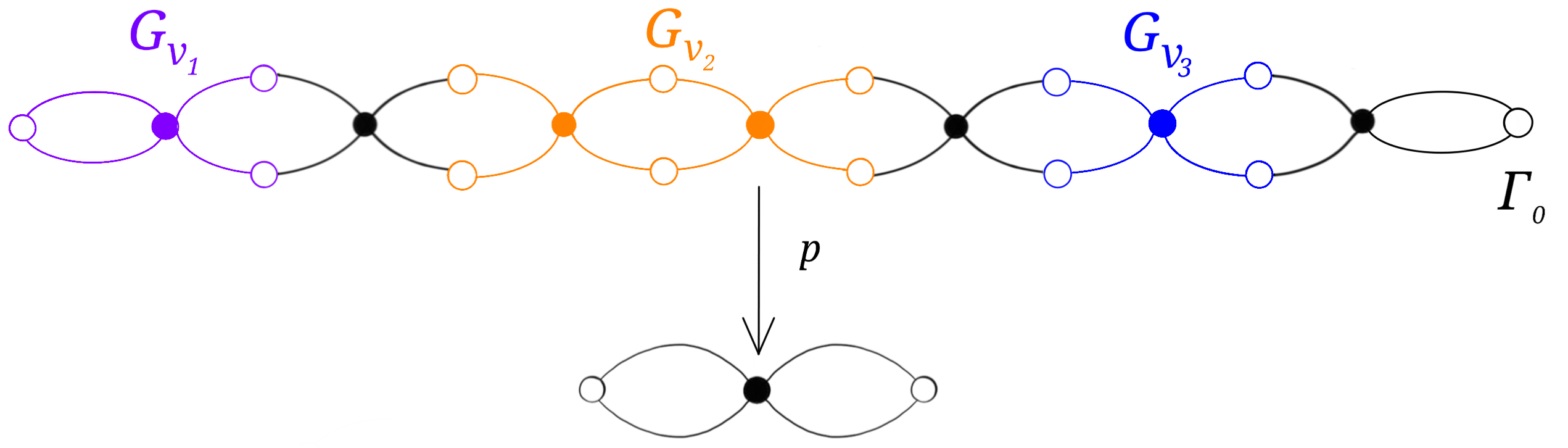}
\caption{A $7$-fold covering with the three basic pieces needed to initialize our induction: the purple subgraph collapses onto an $h_2$-piece, the orange one onto a $h_4$-piece, and the blue one is an $s$-piece.}\label{fig:initouille}
\end{figure}

This can be done with a $7$-fold covering graph over $\Gamma_0$ as shown in Figure \ref{fig:initouille}. Then, we can apply Lemma \ref{l.main} and realize $\mathcal{F}$ inside a tower $\mathbb{U}$ as desired. Let $\mathcal{M}$ denote the inverse limit of $\mathbb{U}$. Since $\mathcal{F}$ satisfies that $\iota_e(H_{o(e)})\subseteq \text{Int}(H_{t(e)})$ for every $e\in E(\cT)$, Proposition \ref{p.include} implies the existence of a family of leaves $\{\cL^\alpha:\alpha\in\cE(\cT)\}$ of $\mathcal{M}$ verifying that $(\cE_0(H^\alpha),\cE(H^\alpha))$ is equivalent to $(\cE_0(\cL^\alpha),\cE(\cL^\alpha))$ for every $\alpha\in\cE(\cT)$. Therefore, by Proposition \ref{p.forestuniv}, all equivalence classes of end pairs satisfying condition $(\ast)$ are realized in $\{(\cE_0(\cL^\alpha),\cE(\cL^\alpha)):\alpha\in\cE(\cT)\}$. 

Finally, since there exists a leaf $\cL^\alpha$ with $\cE_0(\cL^\alpha)=\emptyset$ we can apply Proposition \ref{p.generic_tree} to show that the generic leaf of $\mathcal{M}$ is a tree. 
\end{proof}

\paragraph{\textbf{Modifying the construction to realize the missing triples}}
We proceed to show how to modify the construction of Proposition \ref{p.infinite} in order to (also) include leaves realizing the classifying triples $$\{(g,\emptyset,K):g> 0\text{ and }K\text{ a Cantor set}\}$$ 

For this we need to introduce some definitions and notations. Denote by $T_\ast$ the tree with a unique $b_1$-vertex that is obtained by glueing $s$-pieces. Given a $C$-graph $G$, define $T(G)$ as the $C$-graph obtained by glueing copies of $T_\ast$ at each $b_1$-vertex of $G$ (see Figure \ref{fig:tidgi}). Note that $\beta_1(T(G))=\beta_1(G)$. Finally, define $$T(G,r)=\{w\in T(G):\dist_{T(G)}(G,w)\leq 2r\}.$$ 
Notice there exist natural $C$-inclusions $\iota_r:T(G,r)\to T(G,r+1)$ for every $r\geq 1$ and that $T(G)$ is isomorphic to $\underrightarrow{\lim}\{\iota_{r}:T(G,r)\to T(G,r+1)\}$. 

\begin{figure}[h!]
\centering
\includegraphics[scale=0.1]{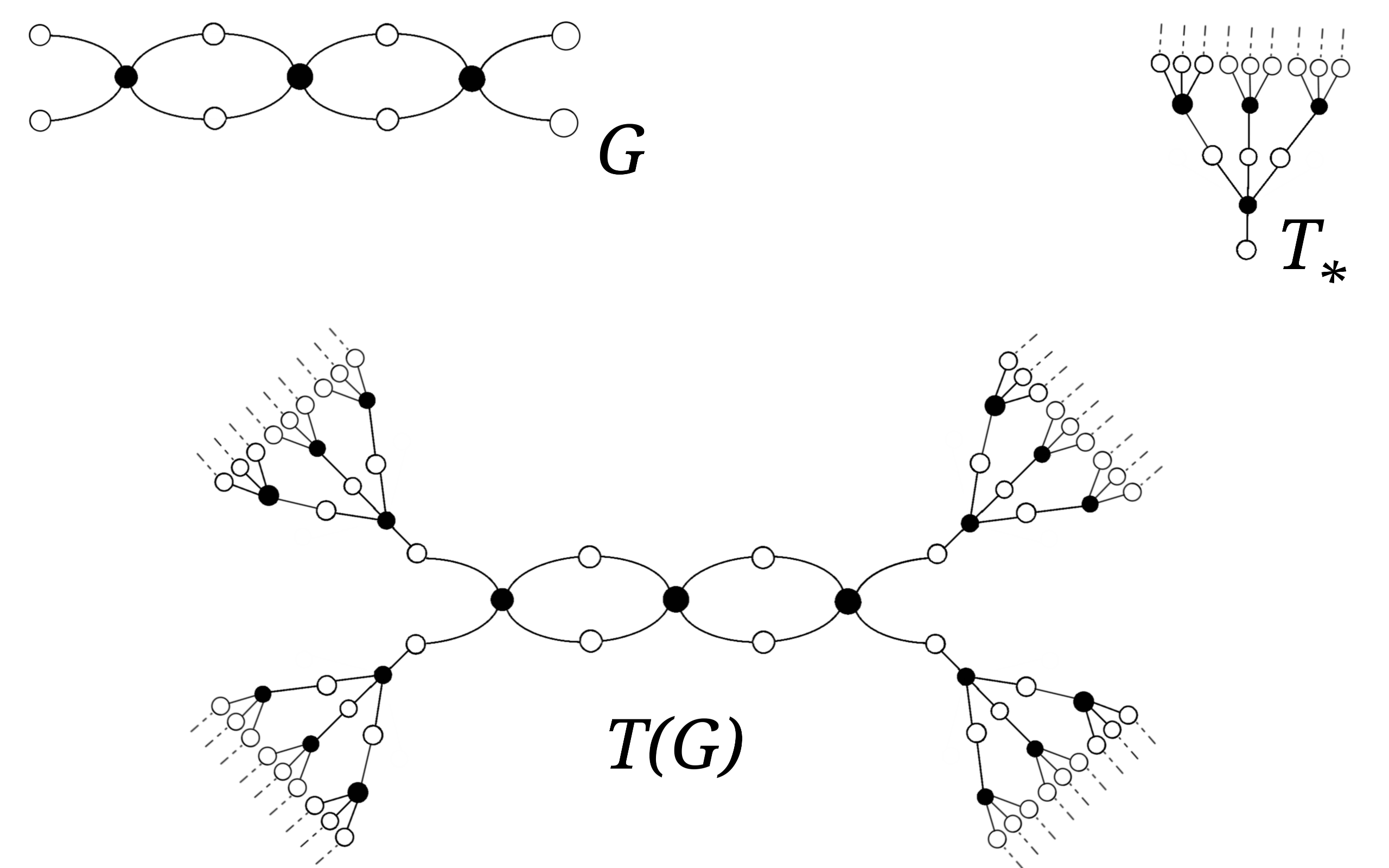}
\caption{The tree $T_\ast$, a $C$-graph $G$ with $\beta_1(G)=2$ and four $b_1$-vertices, and the corresponding infinite graph $T(G)$.}\label{fig:tidgi}
\end{figure}

Roughly speaking, our idea to modify the construction in Lemma \ref{l.main} while also including $C$-subgraphs of the form $T(G,r)$ and lifts of the form $\iota_r:T(G,r)\to T(G,r+1)$ inside our tower. We proceed with our construction. 

First we show the following result
\begin{proposition}[Including graphs with finite dimensional homology]\label{p.include_finite_homo} There exist
\begin{itemize}
\item a tower $\mathbb{U}=\{q_n:\Gamma_{n+1}\to\Gamma_n\}$ realizing the forest $\mathcal{F}$ of Proposition \ref{p.forestuniv} via a $C$-subgraph forest $\mathcal{S}=(\cT,\{G_v\}_{v\in V(\cT)},\{j_e\}_{e\in E(\cT)});$
\item for each $n\geq 1$, a disjoint family of $C$-subgraphs of $\Gamma_n$ denoted\\ $\{G_{i,n}:1\leq i\leq n\}$ such that:
\begin{itemize}
\item $G_{i,n}$ is disjoint from $G_v$ for every $v\in V_n(\cT)$ and $i\leq n$;
\item $\beta_1(G_{i,n})=i$ for $i\geq 1$;
\item $G_{i,n+1}=T(G_{i,n},1)$ for $1\leq i\leq n$;
\end{itemize}
\item a family of $C$-inclusions $\{j_{i,n}:G_{i,n}\to G_{i,n+1}\}$ such that 
\begin{itemize}
\item $q_{n}\circ j_{i,n}=\Id$
\item $j_{i,n}(G_{i,n})\subseteq\Int(G_{i,n+1})$
\end{itemize}
\end{itemize}
\end{proposition}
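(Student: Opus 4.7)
The plan is to enhance the inductive construction of Lemma~\ref{l.main} so that, in addition to realizing the forest $\mathcal{F}$ from Proposition~\ref{p.forestuniv}, we carry along the disjoint family $\{G_{i,n}:1\leq i\leq n\}$ and their $(1:1)$-lifts $j_{i,n}$. The key observation is that $T(G,1)$ is obtained from $G$ by attaching a single $s$-piece at each of its (finitely many) $b_1$-vertices; each such attachment is an elementary $C$-inclusion of the type handled by Case~2 of Lemma~\ref{l.bblock}. Therefore the transition from $G_{i,n}$ to $G_{i,n+1}=T(G_{i,n},1)$ can be realized as a finite sequence of elementary realizations, which we compose into the single covering step $q_n:\Gamma_{n+1}\to\Gamma_n$.

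\emph{Inductive construction.} We start from the $7$-fold cover $\Gamma_0$ of Figure~\ref{fig:initouille}, which contains the three disjoint basic pieces needed to begin realizing $\mathcal{F}$. Assume inductively that we have built $\Gamma_n$ together with disjoint $C$-subgraphs $\{G_v:v\in V_n(\cT)\}$ and, if $n\geq 1$, $\{G_{i,n}:1\leq i\leq n\}$ satisfying the conclusion up to level $n$. We build $q_n$ as a composition of atomic operations of three kinds, each an instance of Lemma~\ref{l.multiplelift} or of Lemma~\ref{l.bblock}: (i)~the forest step, exactly as in the proof of Lemma~\ref{l.main}; (ii)~for every $i\in\{1,\dots,n\}$ and every $b_1$-vertex $w$ of $G_{i,n}$, one elementary realization (Case~2 of Lemma~\ref{l.bblock}) attaching an $s$-piece at the current lift of $w$, the cumulative effect being to produce $G_{i,n+1}=T(G_{i,n},1)$ together with a $(1:1)$-lift $j_{i,n}$ whose image lies in $\Int(G_{i,n+1})$, since every original $b_1$-vertex is now surrounded by a newly attached $s$-piece; and (iii)~the construction of a new $G_{n+1,n+1}$ with $\beta_1(G_{n+1,n+1})=n+1$, obtained by selecting an isolated $s$-piece in $\Gamma_{n+1}$ (produced, if needed, via Lemma~\ref{l.multiplelift}) and then applying $n+1$ successive instances of Case~3 of Lemma~\ref{l.bblock}, each attaching an $h_4$-piece at two $b_1$-vertices of the current subgraph; an Euler-characteristic count shows that each such attachment raises $\beta_1$ by exactly one while keeping the subgraph connected. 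At every application of Lemma~\ref{l.bblock}, the auxiliary subgraph $G_0$ is taken to be the disjoint union of all currently tracked $C$-subgraphs other than the one being modified, which preserves mutual disjointness at every intermediate stage.

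\emph{Main obstacle.} The principal technical issue is verifying that these three types of atomic operations can be interleaved and composed into a single covering step $q_n$ while simultaneously preserving disjointness of every tracked $C$-subgraph and the commuting-diagram conditions required for the forest realization. This is a careful bookkeeping argument made feasible by the ``disjoint auxiliary'' slot $G_0$ of Lemma~\ref{l.bblock} together with the arbitrary-replication capability of Lemma~\ref{l.multiplelift}. The hypothesis $\beta_1(\Gamma)\geq 3$ required to apply Case~3 of Lemma~\ref{l.bblock} holds throughout, since a connected $d$-fold cover of the figure eight $C$-graph satisfies $\beta_1=1+d$, which equals $8$ already for $\Gamma_0$ and only grows along the tower.
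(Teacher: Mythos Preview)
Your steps (i) and (ii) are essentially correct and match the paper's strategy: the forest step is handled exactly as in Lemma~\ref{l.main}, and the passage $G_{i,n}\mapsto T(G_{i,n},1)$ is indeed a finite composition of Case~2 elementary realizations (attaching one $s$-piece per $b_1$-vertex), with the identity serving as the collapse.

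The genuine gap is in step~(iii). Your Euler-characteristic count applies to the \emph{abstract} $C$-graph $H'$, where attaching an $h_4$-piece at two boundary vertices adds one $h$-vertex, two new $b_1$-vertices, and four edges, hence lowers $\chi$ by $1$. But the proposition asks for actual $C$-subgraphs $G_{i,n}\subseteq\Gamma_n$ with $\beta_1(G_{i,n})=i$, and covers of the figure eight $C$-graph contain no $h$-vertices. What Case~3 of Lemma~\ref{l.bblock} actually attaches in the cover is the subgraph $G'=j^X\big(\Gamma^{(1)}_{a_0^1,a_1^1}\cup\Gamma^{(2)}_{a_0^2,a_2^2}\big)$, built from two copies of $\Gamma$ each cut at two points and then glued at two points. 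If $\beta_1(\Gamma)=B$ (and $B\geq 3$ is required), a straightforward count gives $\beta_1(G')=2B-3\geq 3$, and attaching $G'$ to the current subgraph at two vertices increases $\beta_1$ by $\beta_1(G')+1\geq 4$, not by $1$. Moreover $B$ grows along the tower, so the increment is not even bounded. Hence your construction produces a $G_{n+1,n+1}$ with first Betti number far larger than $n+1$, and the crucial identity $\beta_1(G_{i,i})=i$ (needed to realize the triples $(g,\emptyset,K)$ with $g$ finite) fails.

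The paper avoids this by not routing the creation of $G_{n+1,n+1}$ through the collapse machinery at all. It formulates a separate variant of Lemma~\ref{l.bblock}, namely Lemma~\ref{l.variationbblock}, which in a single covering step simultaneously (a)~turns $G$ into $T(G,1)$ with the required $(1{:}1)$-lift, (b)~preserves a lift of the auxiliary family $G_0$, and (c)~produces a fresh disjoint $C$-subgraph $\hat F$ with $\beta_1(\hat F)=m$ for any prescribed $m$. The prescribed-$\beta_1$ part is obtained by a direct surgery on copies of $\Gamma$ rather than via Case~3 of Lemma~\ref{l.bblock}. If you want to repair your argument, replace step~(iii) by such a direct construction of a subgraph with controlled $\beta_1$; the $h_4$-realization of Lemma~\ref{l.bblock} is the wrong tool for this.
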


To prove that such a tower exists, we use the following Lemma which is a variant of Lemma \ref{l.bblock}, and whose proof is left to the reader.
\begin{lemma}\label{l.variationbblock}Consider $\Gamma$ a finite covering space of the figure eight $C$-graph, $G,G_0$ disjoint $C$-subgraphs of $\Gamma$ and $m\in\N$. Then, there exists a finite covering $p:\hat\Gamma\to\Gamma$ and $\hat G,\hat G_0,\hat F$ disjoint $C$-subgraphs of $\hat\Gamma$ such that \begin{itemize}
\item $\beta_1(\hat F)=m$;
\item $\hat G_0$ is a lift of $G_0$;
\item $\hat G$ is isomorphic to $T(G,	1)$;
\item there exists $j:G\to\hat G$ such that 
\begin{itemize}
\item $p\circ j=id$ and
\item $j(G)\subseteq \Int(\hat G)$.
\end{itemize}
\end{itemize}
\end{lemma}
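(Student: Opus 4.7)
The approach is to adapt the multi-copy surgery construction of Lemma \ref{l.bblock} (Case 2) so as to attach a fresh $s$-piece simultaneously at every $b_1$-vertex of $G$, and to combine this with a cyclic surgery in the style of Lemma \ref{l.multiplelift} to build an independent subgraph with prescribed first Betti number.

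Concretely, let $a_1,\dots,a_k$ be the $b_1$-vertices of $G$, and take a disjoint union $\Delta=\Gamma^{(0)}\sqcup\Gamma^{(1)}\sqcup\cdots\sqcup\Gamma^{(k)}\sqcup\Gamma^{(k+1)}\sqcup\cdots\sqcup\Gamma^{(k+N)}$ of $k+1+N$ copies of $\Gamma$ with its natural covering $q\colon\Delta\to\Gamma$. The copy $\Gamma^{(0)}$ hosts natural $(1:1)$ lifts of both $G$ (which will supply $j(G)$) and $G_0$ (which will supply $\hat G_0$), and these are disjoint by assumption. For each $i=1,\dots,k$, I would perform the Case 2 surgery of Lemma \ref{l.bblock} between $\Gamma^{(0)}$ and $\Gamma^{(i)}$ at the lifts of $a_i$: this attaches one fresh $s$-piece sitting inside $\Gamma^{(i)}$ to $j(G)$ precisely at $j(a_i)$. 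Because the $\Gamma^{(i)}$ with $i\geq 1$ are mutually distinct, these $k$ attached $s$-pieces are pairwise disjoint, and so $\hat G := j(G)\cup(\text{attached }s\text{-pieces})$ is isomorphic to $T(G,1)$. Next, perform a cyclic surgery in the style of Lemma \ref{l.multiplelift} on the remaining copies $\Gamma^{(k+1)},\dots,\Gamma^{(k+N)}$ at an auxiliary $b_2$-vertex chosen away from $G$, $G_0$ and the $a_i$'s; the resulting connected piece $\hat F$ has first Betti number growing linearly in $N$ and in $\beta_1(\Gamma)$, so choosing $N$ appropriately (and, if necessary, pruning to a suitable $C$-subgraph) realizes any prescribed value $\beta_1(\hat F)=m$. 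Remark \ref{r.connected} together with one final auxiliary surgery (if needed) ensures that $\hat\Gamma$ is connected.

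The main verification is that $j(G)\subseteq\Int(\hat G)$: the Case 2 surgery at $a_i$ gives $j(a_i)$ exactly one new incident edge, namely the one into the attached $s$-piece, so inside $\hat G$ the vertex $j(a_i)$ has valency $2$ and becomes a $b_2$-vertex rather than a $b_1$-vertex; hence $j(a_i)\in\Int(\hat G)$. As this happens at every $b_1$-vertex of $G$ simultaneously, $j(G)$ is disjoint from $\partial\hat G$, as required. Disjointness of $\hat G$, $\hat G_0$ and $\hat F$ is automatic from the use of distinct copies and the locality of each surgery. The bookkeeping obstacle I expect to be the most delicate is tuning $\beta_1(\hat F)$ to be exactly $m$; the Euler-characteristic count needed to do this is routine but must be matched for every $m$, which is presumably why the statement is left to the reader.
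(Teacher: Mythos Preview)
The paper does not actually prove this lemma; it explicitly leaves the proof to the reader, noting only that it is a variant of Lemma \ref{l.bblock}. Your proposal is exactly the kind of adaptation the authors have in mind: iterating the Case~2 surgery of Lemma \ref{l.bblock} once for each $b_1$-vertex of $G$ (using a fresh copy of $\Gamma$ each time) to attach the required $s$-pieces and obtain $T(G,1)$, while reserving further copies for a cyclic surgery \`a la Lemma \ref{l.multiplelift} to manufacture $\hat F$ and to keep a clean $(1:1)$ lift of $G_0$. The verification that $j(G)\subseteq\Int(\hat G)$ is correct, and the disjointness of $\hat G$, $\hat G_0$, $\hat F$ follows from the use of distinct copies.

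Two small points worth tightening. First, in your surgery the set $X=\{a_i^{(0)},a_i^{(i)}:1\le i\le k\}$ maps $(2:1)$ onto $\{a_1,\dots,a_k\}\subset\Gamma$, so the global surgery is well-defined in one step; there is no need to iterate, and doing it at once avoids worrying about intermediate connectedness. Second, for the exact value $\beta_1(\hat F)=m$: the cyclic $N$-fold gluing has $\beta_1=N(\beta_1(\Gamma)-1)+1$, so one cannot always hit $m$ on the nose as a full component; your parenthetical ``pruning to a suitable $C$-subgraph'' is the right fix, but note that a bare cycle is \emph{not} a $C$-subgraph (its $s$-vertices would be missing two edges), so for small $m$ one must take, e.g., a cycle together with the full $1$-balls around its $s$-vertices. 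These are genuinely routine, which is presumably why the authors omitted the argument.
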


\begin{proof}[Proof of Proposition \ref{p.include_finite_homo}] 

Following the proof and notations of Lemma \ref{l.main}, we say that the inductive property (IP) is satisfied up to level $k$ and we denote it by (IP)$_k$ if there exist

\begin{itemize}
\item a realization of $\mathcal{F}$ up to level $k$ denoted $\{q_i:\Gamma_{i+1}\to\Gamma_i;i=0,\ldots,k-1\}$ (recall the definition of realization given in \S \ref{ss.forests_inclusion_realization});
\item a family of disjoint $C$-subgraphs $\{G_{i,n}:1\leq i\leq n\leq k\}$ with $G_{i,n}\subseteq \Gamma_n$ satisfying all the properties stated above and
\item a family of $C$-inclusions $j_{i,l}:G_{i,l}\to G_{i,l+1}$ with $1\leq i\leq l\leq k$, satisfying all the properties stated above. 
\end{itemize}

Now, we prove that (IP)$_k$ implies (IP)$_{k+1}$. To do so, let $\mathcal{G}_k=\bigcup_{i\leq k}G_{i,k}$. Now, we are going to proceed as in Lemma \ref{l.main} but with a slight variation. Consider a covering $q:\Gamma^\ast_{k+1}\to \Gamma_k$ extending the realization of $\mathcal{F}$ one more floor so that, in addition, $\Gamma^\ast_{k+1}$ contains a $C$-subgraph $\mathcal{G}'_k$ which is a $(1:1)$ lift of $\mathcal{G}_k$. Then, apply Lemma \ref{l.variationbblock} with $\Gamma=\Gamma_{k+1}^\ast$, $G_0=\bigcup_{v\in V_{k+1}(\cT)}G_v$, $G=\mathcal{G}'_k$ and $k=m$ to construct a finite covering $p:\Gamma_{k+1}\to\Gamma_{k+1}^\ast$ together with all the $C$-subgraphs and $C$-inclusions as stated in the Lemma. 

We claim that $q_{k+1}:=q\circ p:\Gamma_{k+1}\to\Gamma_k$ is the desired covering. To see this, compose the $C$-inclusion associated to the covering $q$ with those associated to $p$. It is straightforward to check that $\Gamma_{k+1}$ contains all the desired $C$-subgraphs and therefore that $\{q_i:\Gamma_{i+1}\to\Gamma_i;i=0,\ldots,k\}$ satisfies (IP)$_{k+1}$. Finally, by induction, we obtain the desired tower $\mathbb{U}$.
\end{proof}
 
\begin{proof}[End of proof of Theorem \ref{t.grafos}]
To check that $\mathbb{U}$ satisfies the two conditions required in Theorem $\ref{t.grafos}$, let $\mathcal{M}$ denote the inverse limit of the tower $\mathbb{U}$. Since $\mathbb{U}$ realizes $\mathcal{F}$, $\mathcal{M}$ realizes all classifying triples satisfying condition $(\ast)$ with infinite dimensional homology. To check that classifying triples satisfying condition $(\ast)$ with finite dimensional homology are realized, note that for every $k\geq 1$, $T(G_{k,k})$ is isomorphic to the direct limit $\underrightarrow{\lim}\{j_{k,l}:G_{k,l}\to G_{k,l+1};l\geq k\}$. Therefore, we can argue as in Proposition \ref{p.iso} to show the existence of leaves isomorphic to $T(G_{k,k})$, for every $k\geq 1$. Since $\beta_1(T(G_{k,k}))=k$ and $\cE(T(G_{k,k}))$ is a Cantor set, this finishes the proof of Theorem \ref{t.grafos}.
\end{proof}

\section{Proof of Proposition \ref{p.forestuniv}}\label{s.forestuniv} Recall that we endow $C$-graphs with the path distance where all edges have length one. Let $\text{dist}$ denote this distance and $B_{G}(v,r)=\{w\in G:\dist(v,w)\leq r\}$. We say that $(G,v)$ is a \emph{pointed $C$-graph} if $v\in G$ \emph{is not of boundary type}. 
We omit the pointing from the notation unless it creates confusion. In this spirit, when $(G,v)$ is a pointed $C$-graph we write $B_G(n)$ instead of $B_G(v,n)$. Finally, denote $[(G,v)]$ the class of $(G,v)$ up to pointing-preserving isomorphisms.

\subsection{The construction} In order to construct our forest of $C$-graphs with the desired limits we take the reverse path. First we define a family of $C$-graphs that we want to realize as limits and then we construct the forest of $C$-graphs realizing the family as limits. We proceed to define this family.
\paragraph{\textbf{The family $\mathcal{C}$}}Say that a pointed $C$-graph $(G,v)$ belongs to the family $\mathcal{C}$ if
\begin{enumerate}
\item $\partial G=\emptyset$; and
\item $B_G(v,2n+1)$ is obtained from $B_G(v,2n-1)$ by adding a disjoint union of 
\begin{itemize} 
\item $h_4$-pieces meeting $\partial B_G(v,2n-1)$ at exactly two boundary vertices and,
\item $s$ and $h_2$-pieces meeting $\partial B_G(v,2n-1)$ at exactly one boundary vertex.
\end{itemize}

\end{enumerate}

\begin{remark}\label{r.familyc}Note that, since pointings are not of boundary type, the balls $B_G(2n+1)$ are $C$-subgraphs. Also, by Condition $(2)$ we have that the $C$-inclusions $$\iota:B_{G}(2n-1)\to B_G(2n+1)$$ are elementarily decomposable. Finally, by definition, we have that $B_G(2n+1)$ strictly contains $B_G(2n-1)$ which in particular implies that every graph $G$ in $\cG$ is infinite. 
\end{remark}

\begin{remark}\label{rem.CgraphC} It is easy to check that the $C$-graphs illustrated in Figures \ref{fig:evouille} and \ref{fig:forouille} belong to the family $\cC$ (independently on the pointing).
\end{remark}

\paragraph{\textbf{The construction of $\cT$}}We proceed to construct the underlying forest $\mathcal{T}$, for this define $$V_n(\mathcal{T})=\Bigg\{\bigg[B_{G}(2n+1)\bigg]:G\in\mathcal{C},n\geq0\Bigg\}$$ 
where, as we recall, we are considering pointed $C$-graphs up to pointing-preserving isomorphisms. Clearly $V_n(\mathcal{T})$ is finite for every $n$. 
 Moreover $V_0(\cT)$ consists of three vertices $v_1,v_2$ and $v_3$ corresponding respectively to an $h_2$-piece, an $h_4$-piece and an $s$-piece.
On the other hand we define that $$([B_1],[B_2])\in E_n(\mathcal{T})\subseteq V_n(\mathcal{T})\times V_{n+1}(\mathcal{T})$$ if there exists a $C$-inclusion $\iota:B_1\to B_2$ preserving the pointing. Notice that, since we are considering $C$-inclusions preserving the pointing, for every $[B]\in V_n(\cT)$ with $n>0$ there exists exactly one edge $e\in E(\cT)$ with $t(e)=[B]$. Therefore, $\cT$ has no cycle. 

\paragraph{\textbf{The construction of }$\mathcal{F}$}First we construct a forest of pointed $C$-graphs with pointing preserving $C$-inclusions. Given $[B]\in V(\mathcal{T})$ define $H_{[B]}$ as any representative of $[B]$, and given 

$$e=([B_1],[B_2])\in E(\mathcal{T}),$$ 
define $\iota_e$ as any pointing preserving $C$-inclusion from $H_{[B_1]}$ to $H_{[B_2]}$. Then, we define our forest of $C$-graphs as $$\mathcal{F}=(\mathcal{T},\{H_{[B]}\}_{[B]\in V(\mathcal{T})},{\{ \iota_e\}}_{e\in E(\mathcal{T})})$$ where we forget the pointings of the $\{H_{[B]}\}_{[B]\in V(\cT)}$. 

Notice that by Remark \ref{r.familyc}, the forest of $C$-graphs $\mathcal{F}$ is elementarily decomposable. Also note that, since elements of $\mathcal{C}$ have empty boundary it holds that $$\iota_e(H_{o(e)})\subseteq \text{Int}(H_{t(e)}) \text{ for every }e\in E(\cT).$$ 

\begin{remark} By construction we have that if $G\in\mathcal{C}$ then $([B_G(2n+1)])_{n\in\N}$ is a path in $\cT$ converging to an end $\alpha\in\cE(\mathcal{T})$ with $H^{\alpha}$ isomorphic to $G$. In other words: \begin{center}\emph{All the elements in the family $\mathcal{C}$ are realized as limits of the forest $\mathcal{F}$.}\end{center}
\end{remark}

In order to finish the proof of Proposition \ref{p.forestuniv} it remains to show that the ends pairs of the elements of $\mathcal{C}$ realize all pairs satisfying condition $(\ast)$.

\subsection{The ends pairs of elements in $\mathcal{C}$}
First, we show that every pair $(K_0,K_1)$ satisfying condition $(\ast)$ with $K_1$ infinite, is realized as an end pair of an element in $\mathcal{C}$. That is the content of the following Proposition:

\begin{proposition}
\label{p.trees_star}
For every pair $(K_0,K_1)$ satisfying condition $(\ast)$ with $K_1$ infinite, there exists a $C$-graph $G\in\cC$ such that $(\cE_0(G),\cE(G))$ is equivalent to $(K_0,K_1)$. Therefore, every such pair is realized as an ends pair of a limit of $\mathcal{F}$.
\end{proposition}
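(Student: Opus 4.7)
The plan is to construct $G\in\cC$ explicitly from $(K_0,K_1)$ by the iterative recipe defining $\cC$, guided by a decorated tree whose ends realize $K_1$.

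The first step is to build an infinite rooted tree $T$ whose ends are in bijection with $K_1$ and in which every internal vertex has either $1$ or $3$ children. Such a tree exists for any infinite compact totally disconnected metrizable $K_1$: using the Cantor--Bendixson decomposition $K_1=P\sqcup D$ (with $P$ the perfect kernel, hence either empty or a Cantor set, and $D$ countable), one obtains a $3$-branching ternary tree $T_P$ realizing $P$ (using the fact that any Cantor set partitions into three clopen pieces, each a Cantor set), and then inserts each isolated point $x\in D$ as an infinite $1$-branching ray by replacing a suitable child of $T_P$ by a $3$-child subdivision containing the ray to $x$ together with a clopen refinement of the original subtree; the insertion depths are chosen so that the end topology matches that of $K_1$, which is possible because isolated points of $K_1$ accumulating on $P$ can be arranged to correspond to rays inserted at arbitrarily deep vertices of $T_P$.

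The second step is to decorate each vertex of $T$ by a label in $\{s,h_2,h_4\}$: assign $s$ to $3$-children vertices and $h_2$ to $1$-children vertices, and then modify along selected rays so that rays converging to points of $K_0$ carry infinitely many $h$-labels (by $h_2$-replacements, or by $h_4$-bridges at pairs of boundary vertices that add homology without altering branching), while rays converging to points of $K_1\setminus K_0$ are eventually labelled $s$ only. Condition $(\ast)$ is crucial here: it guarantees that every isolated end of $T$, which comes from one of the inserted $1$-branching rays and thus automatically carries infinitely many $h_2$-labels, lies in $K_0$, so that the decoration is consistent.

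Finally, the decorated tree $T$ is translated to a $C$-graph $G\in\cC$ by the iterative recipe: start with a basic piece at the pointing $v$ of the prescribed root type, and at each step from $B_G(v,2n-1)$ to $B_G(v,2n+1)$ attach to each boundary vertex the basic piece of the type prescribed by the label of the corresponding vertex of $T$. By construction $G$ lies in $\cC$, its ends are in bijection with the ends of $T$ and hence with $K_1$, and its ends accumulated by homology are exactly those corresponding to $K_0$; a direct verification of the end pair, in the spirit of Proposition~\ref{p.codingends}, concludes the proof. The main technical obstacle is the first step: realizing $K_1$ as the ends of a $1$-or-$3$-branching tree while preserving the topology, i.e.\ inserting the isolated points of $D$ at depths reflecting their accumulation onto $P$. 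This is the delicate combinatorial heart of the argument and is where the flexibility allowed by condition $(\ast)$ is essential, since one must be able to freely introduce $h_2$-decorated isolated rays whenever needed.
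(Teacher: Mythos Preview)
Your overall strategy---realize $K_1$ as the end space of a rooted tree whose vertices have $1$ or $3$ children, decorate so that ends in $K_0$ are exactly those whose rays carry infinitely many $h$-labels, and translate to a $C$-graph in $\cC$---is precisely the shape of the paper's argument. But your execution of the first two steps has a genuine gap. The Cantor--Bendixson construction you sketch only works when $K_1$ has rank at most~$1$. You decompose $K_1=P\sqcup D$ and propose to build a ternary tree $T_P$ and attach $1$-branching rays for ``each isolated point $x\in D$''; but $D$ is the entire scattered part and in general contains non-isolated points of $K_1$. For instance, if $K_1=\omega^2+1$ with the order topology then $P=\emptyset$ (so there is no $T_P$ to start from) and the limit ordinals $\omega\cdot n$ lie in $D$ yet are not isolated: inserting such a point as a $1$-branching ray would force its end to be isolated in the tree, while omitting it leaves it unrealized. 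Your decoration step is likewise underspecified: for $x\in K_0\cap P$ you must arrange infinitely many $h$-labels along a ray that, in your $T_P$, is purely $3$-branching, and ``modify along selected rays'' does not explain how to do this while keeping rays to nearby points of $P\setminus K_0$ eventually $s$-only. (The mention of $h_4$-bridges is unnecessary here: the paper's graph $G_\xi$ for infinite $K_1$ is a topological tree using only $s$- and $h_2$-vertices.)

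The paper sidesteps both issues by building the tree and its decoration \emph{simultaneously} through an adapted sequence of partitions $(\xi_i)_{i\in\N}$ of $K_1$ (Lemma~\ref{l.cantor}). The key axiom (A\,5) states that an atom $A\in\xi_i$ meets $K_0$ if and only if $A\in\xi_{i-1}\cup\xi_{i+1}$: atoms touching $K_0$ ``pause'' (producing valency-$2$, hence $h_2$, vertices) while atoms disjoint from $K_0$ always split into three (producing $s$-vertices). Condition~$(\ast)$ enters exactly to ensure that a clopen atom disjoint from $K_0$ has no isolated points, hence is perfect and can be split. This single combinatorial device produces the correct $\cE(G_\xi)\cong K_1$ and $\cE_0(G_\xi)\cong K_0$ in one stroke, uniformly in the Cantor--Bendixson rank of $K_1$, and is what your sketch is missing.
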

In order to prove Proposition \ref{p.trees_star} we need to introduce some definitions and notations. 

\paragraph{\textbf{Adapted sequences of partitions}}

If $X$ is a set, $\xi\dans\mathcal{P}(X)$ a partition and $x\in X$, we define $\xi(x)=A$ where $x\in A$ and $A\in\xi$. Given a set $X$ with partitions $\xi_1,\xi_2$, we say that $\xi_2$  \emph{is finer than} $\xi_1$ if $\xi_2(x)\dans\xi_1(x)$ for every $x\in X$. In this case we note $\xi_1\prec\xi_2$.

Let $(K_0,K_1)$ be a pair satisfying condition $(\ast)$ with $K_1$ infinite. Recall that by definition this means that isolated points of $K_1$ belong to $K_0$. Let $\xi_1,\xi_2,\ldots$ be a sequence of partitions of $K_1$. We say that $(\xi_n)_{n\in\N}$ is \emph{adapted} to the pair $(K_0,K_1)$ if it satisfies the following properties
\begin{enumerate}[(\text{A} 1)]
\item\label{c1} $\xi_i$ is a finite partition by clopen sets for every $i\in\N$;

\item\label{c2} $\#\xi_0=1$ and $\#\xi_1=2$ or $4$;

\item\label{c3} $\xi_{i}\prec\xi_{i+1}$ for every $i\in\N$ (i.e.  $\xi_{i+1}$ refines $\xi_i$);

\item\label{c4} for every $i\geq 1$, if $A\in\xi_i\setminus\xi_{i+1}$ then, there exist three different and non-empty elements $B_1,B_2,B_3\in\xi_{i+1}$ such that $A=B_1\sqcup B_2\sqcup B_3$;

\item\label{c5} for every $i\geq 1$ and  $A\in\xi_i$ we have that $A\cap K_0\neq\emptyset$ if and only if $A\in\xi_{i-1}\cup\xi_{i+1}$;
\item\label{c6} given two distinct points $x,y\in K_1$ there exists $i\in \N$ such that $y\notin\xi_i(x)$ (i.e. the sequence separates points).
\end{enumerate}

\begin{remark}\label{r.particionfinita} Notice that conditions (A~\ref{c2}, \ref{c3}, \ref{c4}) imply that $\#\xi_i\leq 4.3^{i-1}$\end{remark}

\paragraph{\textbf{From adapted sequences to pointed $C$-graphs}} Consider a pair $(K_0,K_1)$ satisfying condition $(\ast)$ with $K_1$ infinite, and $\xi=(\xi_i)_{i\in\N}$ a sequence of partitions adapted to the pair $(K_0,K_1)$. We will construct a pointed $C$-graph $G_{\xi}\in\mathcal{C}$ satisfying that $(\cE_0(G_{\xi}),\cE(G_{\xi}))$ is equivalent to $(K_0,K_1)$. This construction together with the following Lemma (whose proof we leave to the appendix, see Section \ref{s.Appendix}) will finish the proof of Proposition \ref{p.trees_star}.

\begin{lemma}\label{l.cantor} Let $(K_0,K_1)$ be a pair satisfying condition $(\ast)$ with $K_1$ infinite. Then there exists $(\xi_i)_{i\in\N}$, a sequence of finite partitions adapted  to $(K_0,K_1)$. 
\end{lemma}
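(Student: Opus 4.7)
\emph{Proof plan.} The construction is inductive, each step $(\xi_{i-1},\xi_i)\mapsto\xi_{i+1}$ being determined by condition (A5) together with a flexible choice used to enforce (A6). We initialize $\xi_0=\{K_1\}$ and choose $\xi_1$ as a clopen partition of $K_1$ into two non-empty parts. The choice is made so as to establish the invariant, maintained throughout the construction, that \emph{every element of every $\xi_i$ is either a singleton of an isolated point of $K_1$ (hence a point of $K_0$ by $(\ast)$) or an infinite clopen subset of $K_1$}: if $K_1$ has an isolated point $p$ we take $\xi_1=\{\{p\},K_1\setminus\{p\}\}$, and otherwise $K_1$ is perfect and we take any clopen $2$-partition into non-empty pieces (which are automatically perfect, hence infinite).

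For the inductive step, condition (A5) dictates the following dichotomy on $A\in\xi_i$: if $A\in\xi_i\setminus\xi_{i-1}$ is ``new'' at step $i$, then $A$ must persist to $\xi_{i+1}$ when $A\cap K_0\neq\vide$, and must be split into three non-empty clopen parts when $A\cap K_0=\vide$; if $A\in\xi_i\cap\xi_{i-1}$ is ``persistent'', either option is allowed. The forced split is always feasible: condition $(\ast)$ places all isolated points of $K_1$ into $K_0$, so a $K_0$-disjoint clopen subset has no isolated points, is therefore perfect and non-empty, hence Cantor-like, and admits a clopen $3$-partition into perfect (thus infinite) pieces. This enforces (A1)--(A5) directly.

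To obtain (A6), we fix a countable enumeration of the pairs of distinct points of $K_1$ and, at each step, use the freedom on some persistent element to split it in a way that separates the oldest pending pair. The main technical obstruction is the possible emergence of a $2$-element clopen set $\{p,q\}$ consisting of two isolated points (both forced into $K_0$), which admits no clopen $3$-partition and would leave $p$ and $q$ unseparated forever. We prevent this by performing every subdivision of an infinite $A$ in a manner that preserves the invariant: since an infinite clopen subset of $K_1$ must contain a non-isolated point (a purely isolated closed subset of $K_1$ being discrete and compact, hence finite), we can distribute the isolated and non-isolated points of $A$ among the three sub-pieces so that each sub-piece is either a singleton of an isolated point of $A$ or contains a non-isolated point of $K_1$ and is therefore infinite. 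A standard diagonal argument interleaving the separation tasks then produces the desired adapted sequence.
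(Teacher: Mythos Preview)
Your step-by-step strategy---let (A5) dictate which atoms persist and which must be $3$-split, maintain the invariant that every atom is either a singleton isolated point of $K_1$ or infinite, and use the freedom on persistent atoms to force separation---is sound and considerably more direct than the paper's proof, which proceeds block-wise through a technical Partition Lemma (Lemma~\ref{l.key}) involving pre-partitions and ``natural continuations'' and then iterates that lemma against a separating sequence $(\mu_n)$. Your invariant is exactly the paper's condition (B5), and your observation that any infinite clopen subset of $K_1$ admits a clopen $3$-partition into pieces of that same shape (even one separating a prescribed pair) is precisely the combinatorial crux; your handling of the ``two-isolated-point'' obstruction is correct.

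There is, however, one genuine gap in the argument for (A6). You propose to fix ``a countable enumeration of the pairs of distinct points of $K_1$'', but $K_1$ is in general uncountable (for instance a Cantor set), so no such enumeration exists; and merely ensuring that every non-singleton atom is eventually split does not by itself force the atoms containing a given point to shrink to a singleton (one can keep $3$-splitting a clopen set while two fixed points remain in the same piece forever). The repair is routine: enumerate instead a countable clopen basis $(U_n)$ of $K_1$---or, as the paper does, a separating sequence of finite clopen partitions---and interleave into your induction the task ``make $U_n$ a union of atoms''. Each such task terminates in finitely many steps: if a persistent atom $A$ straddles $U_n$ and one of $A\cap U_n$, $A\cap U_n^c$ is finite, its points are isolated and may be peeled off as singletons over successive steps; if both sides are infinite, a single split along $U_n$ (followed by one further valid $2$-split of either side) already suffices. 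With this amendment your proof goes through.
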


\begin{figure}[h!]
\centering
\includegraphics[scale=0.11]{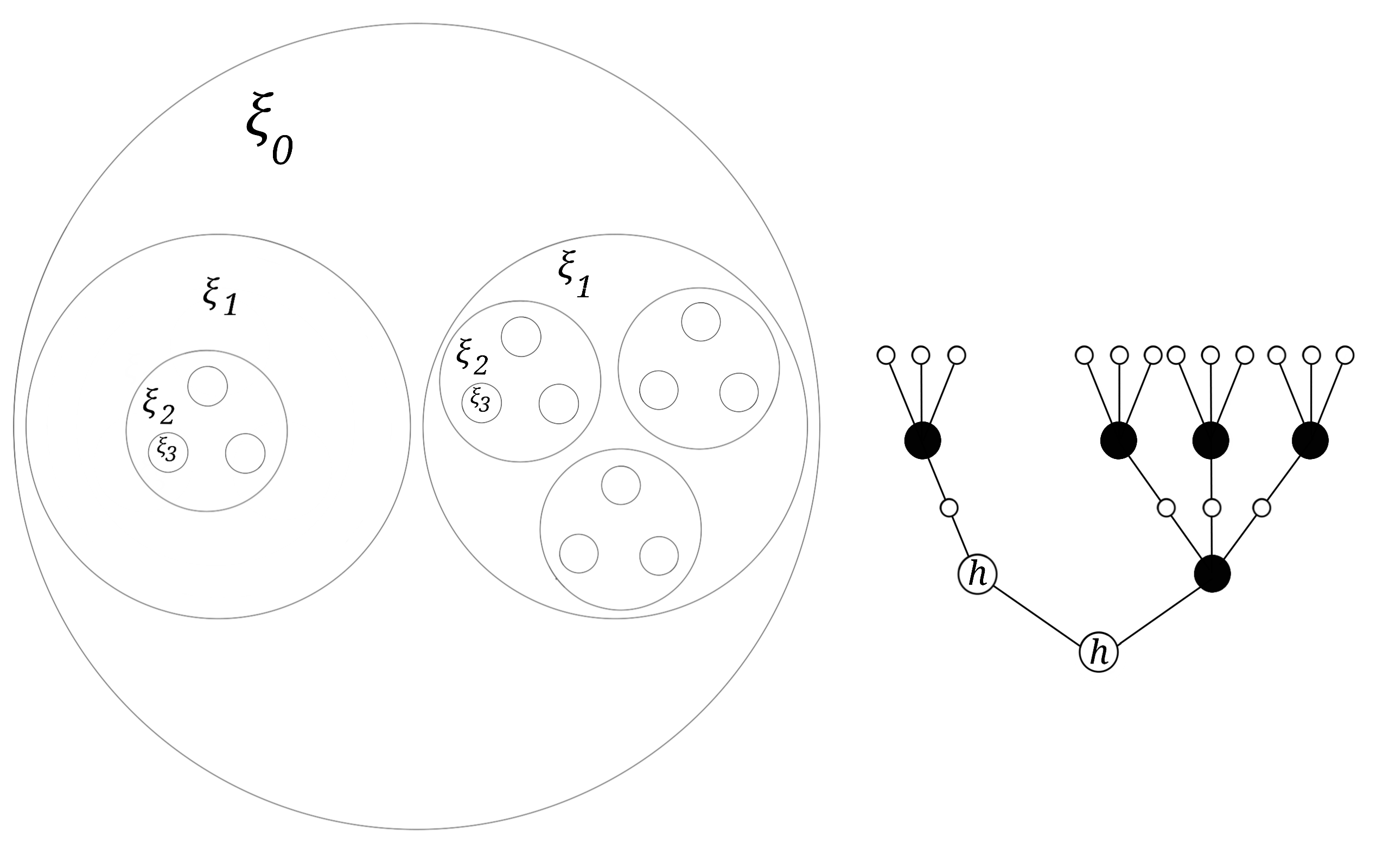}
\caption{An adapted sequence of partition (on the left) and the associated $C$-graph $G_\xi$ (on the right).}\label{fig:adptouille}
\end{figure}

\begin{proof}[Proof of Proposition \ref{p.trees_star}] Let $(K_0,K_1)$ be a pair satisfying condition $(\ast)$ with $K_1$ infinite and $\xi=(\xi_i)_{i\in\N}$ be a sequence of finite partitions adapted to $(K_0,K_1)$. We build a $C$-graph $G_\xi\in\cC$ whose ends pair is equivalent to $(K_0,K_1)$ (see Figure \ref{fig:adptouille}). First define a tree $T_{\xi}$ as:
\begin{itemize}
\item $V(T_{\xi})=\bigsqcup_{i\in\N} \xi_i$ (condition (A \ref{c1}) implies that each $\xi_i$ is a finite set)
\item $(A,B)\in E(T_{\xi})\subseteq V(T_{\xi})\times V(T_{\xi})$ if and only if there exists $n\in\N$ such that $A\in\xi_n,B\in\xi_{n+1}$ and $B\subseteq A$ (conditions (A \ref{c2}, \ref{c3}) imply that every vertex has valency at least 2).
\end{itemize}
Notice that by conditions (A \ref{c2}, \ref{c4}), the valency of vertices is either two or four.  

To transform $T_{\xi}$ into a pointed $C$-graph $G_{\xi}\in\mathcal{C}$, consider the pointing of $T_{\xi}$ at $K_1\in\xi_0$, add boundary vertices in edges midpoints and label other vertices according to their valencies: valency two vertices of non-boundary  type become $h_2$-vertices and valency four  vertices become $s$-vertices. This finishes the construction of $G_{\xi}$.

Proposition \ref{p.trees_star} now follows from the next lemma.
\end{proof}

\begin{lemma}\label{l.equiv} The pair $(\cE_0(G_\xi),\cE(G_\xi))$ is equivalent to $(K_0,K_1)$. 
\end{lemma}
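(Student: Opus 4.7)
The plan is to exhibit a homeomorphism $\Phi : \cE(G_\xi) \to K_1$ that restricts to a bijection between $\cE_0(G_\xi)$ and $K_0$. Since $G_\xi$ arises from the tree $T_\xi$ by subdividing each edge at a $b_2$-vertex --- an operation that does not alter the underlying topological space --- I would identify $\cE(G_\xi)$ with $\cE(T_\xi)$. An end of $T_\xi$ based at the root $K_1 \in \xi_0$ is precisely a descending chain $K_1 = A_0 \supseteq A_1 \supseteq A_2 \supseteq \cdots$ with $A_i \in \xi_i$, and I would set $\Phi\bigl((A_i)_{i\in\N}\bigr) := \bigcap_i A_i$.

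First I would check that $\Phi$ takes values in $K_1$ and is a bijection. By compactness together with (A \ref{c1}), $\bigcap_i A_i$ is nonempty, while (A \ref{c6}) forces it to be a single point; surjectivity follows by taking $A_i = \xi_i(x)$ for given $x \in K_1$, and injectivity is immediate from (A \ref{c6}). To upgrade $\Phi$ to a homeomorphism I would use that the cylinder sets $\{\alpha : A \text{ belongs to the ray defining } \alpha\}$ for $A \in \xi_i$ form a clopen basis of $\cE(T_\xi)$, and that $\Phi$ sends each such cylinder onto the clopen partition element $A \subseteq K_1$. A short compactness argument then shows that $\{\xi_i(x) : i \in \N\}$ is a neighbourhood basis at every $x \in K_1$ (shrinking away from the compact complement of a given open neighbourhood via (A \ref{c6})), so $\Phi$ will be a homeomorphism.

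Next I would translate the $\cE_0$-condition into the language of rays. Since $T_\xi$ (hence $G_\xi$) is a tree it has no homology, so an end $\alpha \in \cE(G_\xi)$ will lie in $\cE_0(G_\xi)$ exactly when the ray converging to $\alpha$ passes through infinitely many $h$-type vertices of $G_\xi$; by construction these are all of type $h_2$, as no $h_4$-vertices appear in $G_\xi$. A non-boundary vertex $A \in \xi_i$ at level $i \geq 1$ becomes an $h_2$-vertex of $G_\xi$ precisely when it has valency $2$ in $T_\xi$, which by (A \ref{c4}) holds iff $A$ is not split in passing to $\xi_{i+1}$, i.e.\ iff $A \in \xi_{i+1}$. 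In terms of the ray $(A_i)$ this is the equality $A_i = A_{i+1}$. Hence $\alpha \in \cE_0(G_\xi)$ iff $A_i = A_{i+1}$ for infinitely many $i$.

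The crux --- and the only point I expect to require genuine work --- will be identifying this combinatorial condition with $\Phi(\alpha) \in K_0$, and here condition (A \ref{c5}) will enter decisively. If $x = \Phi(\alpha)$ lies in $K_0$, then $x \in A_i \cap K_0$ for every $i$, so (A \ref{c5}) provides for each $i \geq 1$ the alternative $A_{i-1} = A_i$ or $A_i = A_{i+1}$; if the latter failed for all $i \geq N$, then for every $i \geq N+1$ both $A_{i-1} \neq A_i$ and $A_i \neq A_{i+1}$ would hold simultaneously, contradicting (A \ref{c5}). Conversely, if $x \notin K_0$, the partition basis shrinks to $\{x\}$, so some $A_{i_0}$ is disjoint from $K_0$, hence $A_i \cap K_0 = \emptyset$ for all $i \geq i_0$, and (A \ref{c5}) forces $A_i \neq A_{i+1}$ for every $i > i_0$. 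The main obstacle is precisely this pigeonhole-style elimination step converting the disjunction in (A \ref{c5}) into infinitely many occurrences of a single alternative; the remaining checks are a direct unwinding of the construction of $G_\xi$ from $T_\xi$.
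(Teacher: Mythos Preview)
Your proof is correct and follows essentially the same approach as the paper: both construct the natural bijection between ends of the tree $T_\xi$ and points of $K_1$ via descending chains $(\xi_i(x))_{i\in\N}$, verify it is a homeomorphism using the clopen basis of cylinder sets, and then identify $\cE_0(G_\xi)$ with $K_0$ by recognizing that $h$-vertices are exactly the valency-$2$ vertices and invoking (A~\ref{c5}). The only cosmetic differences are that you build the map in the direction $\cE(G_\xi)\to K_1$ rather than $K_1\to\cE(G_\xi)$, and that you spell out explicitly the pigeonhole step turning the disjunction in (A~\ref{c5}) into infinitely many equalities $A_i=A_{i+1}$, whereas the paper leaves this implicit.
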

\begin{proof}[Proof of Lemma \ref{l.equiv}]

Take $x\in K_1$ and consider the sequence $(\xi_i(x))_{i\in\N}$. Note that this sequence defines an infinite ray in $T_\xi$, which represents an end. See Figure \ref{fig:adptouille}. Define the map $\varphi:K_1\to\cE(G_{\xi})$ sending each $x\in K_1$ to the ray represented by $(\xi_i(x))_{i\in\N}$. We proceed to show that $\varphi$ induces the desired equivalence of pairs.

The injectivity of $\varphi$ comes from condition (A \ref{c6}). The surjectivity comes from the fact that decreasing sequences of nonempty compact metric spaces have nonempty intersections. 

Given $A\in\xi_n$ a vertex of $G_{\xi}$ not of boundary type we can define $W_{A}$ as the set of ends represented by embedded rays that start at the pointing of $G_{\xi}$ and pass through $A$. Notice that $$\{W_A:A\text{ vertex of non-boundary type } \}$$ is a basis of the topology of $\cE(G_{\xi})$. Then, since $\varphi^{-1}(W_A)=\{x:x\in A\}$ is a clopen set we get that $\varphi$ is continuous as desired. 

It remains to prove that $\varphi$ induces a bijective correspondence between $K_0$ and $\cE_0(G_{\xi})$. Note first that $G_\xi$ is topologically a tree. Hence an end of $\xi$ belongs to $\cE_0(G_\xi)$ if and only if it is accumulated by $h$-vertices.

Let $x_0\in K_0$ and $\xi_n(x_0)$ the sequence of elements of $\xi_n$ containing $x_0$. These are clopen sets and the sequence $(\xi_n)_{n\in\N}$ separates points, so $\{\xi_n(x_0):n\in\N\}$ forms a neighbourhood basis of $x_0$. Using condition (A \ref{c5}) above we see that for infinitely many $n\in\N$, $\xi_n(x_0)=\xi_{n+1}(x_0)$. So the ray defined by the sequence $(\xi_n(x_0))_{n\in\N}$ has infinitely many $2$-valent vertices which are $h$-vertices. This proves that $\fhi(x_0)\in\cE_0(G_\xi)$.

Now we consider $x\in K_1\moins K_0$ and look at the ray defined by $(\xi_n(x))_{n\in\N}$. Since $x\notin K_0$, which is closed inside $K_1$, there exists a neighbourhood $V$ of $x$ such that $V\cap K_0=\vide$ and therefore, there exists $n_0\in\N$ such that $\xi_{n_0}(x)\dans V$. Let $C^+$ be the cone of $G_{\xi}$ consisting of the union of the connected components of $G_\xi\moins\{\xi_{n_0}(x)\}$ which don't contain the pointing. There are three of such components by definition because $K_0\cap\xi_{n_0}(x)=\vide$ (this is implied by conditions (A \ref{c4}, \ref{c5})). The same argument shows that every (non-boundary) vertex inside $C^+$ has valency $4$ (since for every $y\in\xi_{n_0}(x)$ and $n>n_0$, $\xi_n(y)\cap K_0=\vide$). This means in particular that the end represented by the ray  $(\xi_n(x))_{n\in\N}$ is not accumulated by $h$-vertices (which are $2$-valent). Hence $\fhi(x)\in\cE(G_\xi)\moins\cE_0(G_\xi)$. This finishes the proof of the Lemma.
\end{proof}

\paragraph{\textbf{Realizing finite ends pairs with elements of $\mathcal{C}$}} In order to finish the proof of Proposition \ref{p.forestuniv} it remains to show that finite ends pairs satisfying condition $(\ast)$ are realized as ends pairs of elements of $\mathcal{C}$. Examples of $C$-graphs with $1,2$ and $3$ ends are shown in Figures \ref{fig:evouille} and \ref{fig:forouille}. In order to construct $C$-graphs with arbitrary number of ends we define an inductive procedure which, from a given $C$-graph with finitely many ends produces a new one with $2$ more ends. For this, we define the $h_2$-\emph{ray} as the one-ended $C$-graph with exactly one valency $1$ boundary vertex which is obtained by concatenating infinitely many $h_2$-pieces. Also, we define the $h_2$-\emph{trident} as the $C$-graph obtained by gluing $3$ $h_2$-rays at an $s$-piece. Note that if a $C$-graph has finitely many ends, the result of the substitution of an $h_2$-ray by an $h_2$-trident increases by $2$ the number of ends. Some examples are shown in Figures \ref{fig:evouille} and \ref{fig:forouille}. The first one shows how to realize $C$-graphs which are topological trees with an even number of ends. The second one shows how to treat $C$-graphs with an odd number of ends. Such a graph is not a topological tree, and one special end is approximated by  vertices of $h_4$-type. As noticed in Remark \ref{rem.CgraphC} these graphs belong to the family $\cC$ and their ends pairs satisfy condition $(\ast)$.

\section{Appendix}
\label{s.Appendix}

\subsection{Proof of Corollary \ref{corollary}}\label{s.corollary}

Consider the lamination $\cL$ constructed in Theorem \ref{maintheorouille}. By construction it comes with a structure of bundle $\Pi_0:\cL\to\Sigma_0$ whose fiber is a Cantor set $K$. Let $D\dans \Sigma_0$ be a small open disc trivializing the bundle so that $U=P_0^{-1}(D)$ is homeomorphic to $D\times K$ and $\partial U$ is homeomorphic to $\partial D\times K$. Define $\mathcal{M}:=\cL\setminus U$ and $\partial \mathcal{M}:=\partial U$. Consider $T$ a copy of the one holed torus and define $\mathcal{V}:=T\times C$ and $\partial \mathcal{V}:=\partial T\times K$. 

We define $\cL'$ by (continuously) identifying the boundaries of $\mathcal{M}$ and $\mathcal{V}$ so that $\partial D\times\{x\}$ identifies with $\partial T\times \{x\}$ for every $x\in K$. Given $x\in\mathcal{M}(=\cL\cap\cL')$ let $\cL_x$ and $\cL_x'$ denote the leaves through $x$ for $\cL$ and $\cL'$ respectively.

\begin{lemma}\label{l.genus} $\cL'$ satisfies: 
\begin{enumerate}
\item\label{g1} $\cL'$ is minimal;
\item\label{g2} $\cE(\cL_x)$ is homeomorphic to $\cE(\cL'_x)$ for every $x\in\mathcal{M}$;
\item\label{g3} every end of every leaf of $\cL'$ is accumulated by genus;
\item\label{g4} the generic leaf of $\cL'$ has a Cantor set of ends.
\end{enumerate}
\end{lemma}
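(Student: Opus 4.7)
The plan is to analyze $\cL'$ leaf by leaf. For $x \in \cM$ the leaf $\cL'_x$ is obtained from $\cL_x$ by excising the disjoint family of disks $\cL_x \cap U$ and sewing a copy of the one-holed torus $T$ along each resulting boundary circle. These disks are indexed by the holonomy orbit $O_x \subseteq K$ of the transversal coordinate of $x$, which is dense in $K$ by minimality of $\cL$. Consequently each leaf of $\cL'$ carries an infinite family of embedded tori whose union is dense in $\cV$, and we obtain a natural bijection between leaves of $\cL$ and leaves of $\cL'$.

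For (\ref{g1}) I would verify density directly. Every leaf of $\cL'$ meets $\cM$ (the tori are attached to boundary circles in $\cM$), so given $x \in \cL'$ I may assume $x \in \cM$. For any non-empty open $W \subseteq \cL'$: if $W \cap \cM \neq \emptyset$ then $\cL'_x \cap W \supseteq \cL_x \cap (W \cap \cM) \neq \emptyset$ by minimality of $\cL$; if instead $W \subseteq \cV \setminus \partial \cV$ then $W$ contains a basic open $W_0 \times V$ with $V \subseteq K$ non-empty, so density of $O_x$ in $K$ yields $z \in O_x \cap V$ and the torus $T \times \{z\} \subseteq \cL'_x$ meets $W$. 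For (\ref{g2}), I would fix a compact exhaustion $(K_n)_{n\in\N}$ of $\cL_x$ with each $K_n$ meeting $\cL_x \cap U$ in finitely many disks, and define $K'_n \subseteq \cL'_x$ by replacing those disks by their corresponding tori. This is a compact exhaustion of $\cL'_x$; since each disk and each torus-with-hole has one boundary circle, the natural bijection between connected components of $\cL_x \setminus K_n$ and of $\cL'_x \setminus K'_n$ is compatible with nesting, inducing a homeomorphism $\cE(\cL_x) \cong \cE(\cL'_x)$. Part (\ref{g4}) then follows from (\ref{g2}) and Theorem \ref{maintheorouille}: the ends-preserving bijection of leaves transports the generic Cantor set of ends of $\cL$ to $\cL'$.

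The core of the lemma is (\ref{g3}). I would reduce it to the claim that every unbounded connected component $C$ of $\cL'_x \setminus K'_n$ meets infinitely many of the attached tori; such a $C$ then has infinite genus and the corresponding end is accumulated by genus. Suppose for contradiction that some $C$ meets only finitely many tori; after enlarging $K'_n$ to absorb them, $C$ embeds as an unbounded connected subset of $\cL_x$ disjoint from $\Pi_0^{-1}(\overline D)$. The plan is to rule this out using minimality: the holonomy pseudogroup acts minimally on the transversal Cantor fiber $K$, so the orbit of every transversal coordinate is dense in $K$, and combining this with a flow-box argument over a neighbourhood of $\overline D$ should force every ray in $\cL_x$ that escapes to infinity to re-enter $\Pi_0^{-1}(D)$ infinitely often, contradicting the existence of $C$. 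This recurrence statement is the main obstacle: promoting the standard density consequence of minimality to the claim that every end of every leaf visits $\Pi_0^{-1}(D)$ infinitely often requires a delicate local-product and closed-invariant-set argument.
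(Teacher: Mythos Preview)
Your argument is correct and runs parallel to the paper's. Two remarks. For (\ref{g1}) the paper takes a shorter route: pick a transverse Cantor section $J\subseteq\cL\setminus U$; the surgery does not change the holonomy pseudogroup restricted to $J$, so minimality of that pseudogroup passes directly from $\cL$ to $\cL'$. Your direct density check also works (in the first case take $W\cap\Int_{\cL'}(\cM)=W\cap(\cL\setminus\overline U)$ rather than $W\cap\cM$, so as to get an honest open subset of $\cL$).

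For (\ref{g3}) you are making the recurrence step harder than it is. You only need each unbounded component of $\cL_x\setminus K_n$ to meet $U$ \emph{once}, not infinitely often: a single intersection produces genus in the corresponding component of $\cL'_x\setminus K'_n$, and since this holds at every level $n$, every end is accumulated by genus. That single intersection is a standard consequence of minimality plus compactness: there is a uniform $R>0$ such that every leafwise $R$-ball in $\cL$ meets $U$ (otherwise a limit of counterexamples would lie on a leaf disjoint from the open set $U$, contradicting minimality), and any unbounded component contains a leafwise $R$-ball disjoint from $K_n$. The paper compresses this into ``by the minimality of $\cL$, every connected component of $L_x\setminus S_i$ intersects $U$''; no delicate closed-invariant-set machinery is required.
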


We leave the proof of the Lemma to the end of the section. Since every possible space of ends is realized by a leaf of $\cL$, Lemma \ref{l.genus} implies that every possible classiying triple of the form $(\infty,\cE,\cE)$ is realized by a leaf of $\cL'$. Finally, notice that $\cL'$ admits a hyperbolic lamination structure by \cite{Candel} (every leaf of $\cL'$ is of infinite topological type). 

We need a definition before proving Lemma \ref{l.genus}. 

Given a solenoid $\mathcal{N}$, we say that a Cantor set $J\subseteq\mathcal{N}$ is \emph{a transverse section} of $\mathcal{N}$ if for some $r>0$ we have $$\bigcup_{x\in J}B(x,r)$$ is an open set and $B(x,r)\cap J=\{x\}$ for every $x\in J$ and if all leaves of $\cN$ intersect $J$. Note that the pseudogroup of holonomy restricted to $J$ is minimal if and only if $\cL$ is minimal (see \cite{candel-conlon}). 

\begin{proof}[Proof of Lemma \ref{l.genus}] Consider a transverse section $J$ of $\cL$ contained inside $\cL\setminus U$ (this is possible by the structure of Cantor bundle of $\cL$). Since $\cL$ is minimal, the holonomy pseudogroup acts minimally on $J$. Notice that removing $U$ does not affect the holonomy pseudogroup restricted to $J$ and therefore the holonomy pseudogroup of $\cL'$ restricted to $J$ is also minimal which implies \ref{g1}. 

Take $x\in\mathcal{M}$, to show that $\cE(L_x)$ is homeomorphic to $\cE(L'_x)$ consider an exhaustion of $L_x$ by compact connected subsurfaces with boundary $$S_1\subseteq S_2\subseteq\ldots\subseteq S_n\subseteq\ldots$$
such that $\partial S_i\cap U=\emptyset$ for every $i\in\N$ and such that different boundary components of $S_i$ correspond to different connected components of $L_x\setminus S_i$ (this can be done using the core tree construction of \cite{BWal}). Since $\partial S_i\cap U=\emptyset$ for every $i\in\N$ this induces a natural exhaustion of $L_x'$ $$T_1\subseteq T_2\subseteq\ldots\subseteq T_n\subseteq\ldots$$ which induces an homeomorphism between the inverse limits of the system of connected components of $L_x\setminus S_i$ and that of $L_x'\setminus T_i$ proving \ref{g2}.

Notice that by the minimality of $\cL$, every connected component of $L_x\setminus S_i$ intersect $U$ and therefore every connected component of $L_x'\setminus T_i$ has non-trivial genus which implies Condition \ref{g3}. Finally, since the generic leaf of $\cL$ has a Cantor set of ends, there exists $R\subseteq J$ a generic subset such that $\cE(L_x)$ is a Cantor set for every $x\in R$. Therefore, Condition \ref{g2} implies that $\cE(L'_x)$ is a Cantor set for every $x\in R$ which implies Condition \ref{g4}. 
\end{proof}

\subsection{Proof of Lemma \ref{l.cantor}}

From now on $(K_0,K_1)$ will be a pair satisfying condition $(\ast)$. We shall give a criterion to prove that a sequence of partitions $(\xi_i)_{i\in\N}$ is adapted to a pair $(K_0,K_1)$. Consider $(\mu_i)_{i\in\N}$ a sequence of partitions by clopen sets of $K_1$ separating points (i.e. for every $x,y\in K_1$ there exists $n\in\N$ with $y\notin\mu_n(x)$). We call a sequence with this property a \emph{separating sequence}.

\paragraph{\textbf{A criterion for adaptability  }} Consider a separating sequence $(\mu_i)_{i\in\N}$. Assume we have a sequence of partitions by clopen sets $(\xi_i)_{i\in\N}$ and a sequence of integers $k_n\to+\infty$ satisfying that for every $n\in\N$ we have:
\begin{itemize}
\item $\xi_1,\ldots,\xi_{k_n}$ satisfies conditions (A~\ref{c1}, \ref{c2}, \ref{c3}, \ref{c4}) in the definition of adapted sequence of partitions.
\end{itemize}
and
\begin{enumerate}[(\text{A} 1)']
          \setcounter{enumi}{4}
\item\label{c5'} If $A\in\xi_i$ for some $i\leq k_n$ then (defining $\xi_i=\emptyset$ for $i<0$ or $i>k_n$)\begin{center}$A\cap K_0\neq\emptyset$ if and only if $A\in\xi_{i-1}\cup\xi_{i+1}$;\end{center} 

\item\label{c6'} $\mu_n\prec \xi_{k_n}$
\end{enumerate}

Then, the sequence $(\xi_i)_{i\in\N}$ is adapted for $(K_0,K_1)$. To see this, notice that conditions (A~\ref{c1}, \ref{c2}, \ref{c3}, \ref{c4}) are automatic. On the other hand, since $k_n\to+\infty$ we have that (A~\ref{c5}) follows from condition (A~\ref{c5'})'. Finally, to check condition (A~\ref{c6}) take $x,y\in K_1$ and $n\in\N$ such that $y\notin\mu_n(x)$. By condition (A~\ref{c6'})' we have that $\mu_n\prec \xi_{k_n}$ and therefore $\xi_{k_n}(x)\subseteq\mu_n(x)$ in particular $y\notin\xi_{k_n}(x)$. 

From now on, we fix  a separating sequence $(\mu_i)_{i\in\N}$. 
\paragraph{\textbf{Two useful lemmas  }}
The proof of the following lemma is left to the reader:

\begin{lemma}\label{l.choto} Given a compact, perfect and totally disconnected space $K$ and $n\in\N$, there exists $\nu_0,\ldots,\nu_n$ partitions by clopen sets satisfying:
\begin{itemize}
\item $\nu_0=\{K\}$
\item $\nu_i\prec\nu_{i+1}$
\item For every $i<n$ and $A\in\nu_i$, there exists infinite clopen sets $\{B_1,B_2,B_3\}\subseteq\nu_{i+1}$ such that $A=B_1\sqcup B_2\sqcup B_3$.
\end{itemize}
\end{lemma}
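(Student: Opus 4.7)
The plan is to proceed by induction on the level index $i$, with the main work concentrated in a single "splitting" claim: every non-empty clopen subset of $K$ decomposes into three non-empty clopen pieces. Once this is established, the recursive construction of $\nu_0,\ldots,\nu_n$ is straightforward, and the requirement that the three pieces be \emph{infinite} comes essentially for free.

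The key preliminary observation I would record is that, because $K$ is perfect, every non-empty clopen subset of $K$ is automatically infinite. Indeed, a finite clopen set in a Hausdorff space is a finite union of isolated points, but perfectness forbids isolated points in $K$, hence also in any of its open subsets. Thus in what follows I only need to split into non-empty clopen pieces.

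Now I would prove the splitting claim: given any non-empty clopen $A\subseteq K$, there exist non-empty clopen $B_1,B_2,B_3\subseteq A$ with $A=B_1\sqcup B_2\sqcup B_3$. By the previous observation $A$ is infinite, so I can pick three distinct points $x_1,x_2,x_3\in A$. Using that $K$ is compact, Hausdorff and totally disconnected --- hence admits a basis of clopen sets --- I first choose a clopen neighbourhood $B_1\subseteq A$ of $x_1$ disjoint from $\{x_2,x_3\}$, then a clopen neighbourhood $B_2\subseteq A\setminus B_1$ of $x_2$ disjoint from $\{x_3\}$, and finally set $B_3:=A\setminus(B_1\cup B_2)$. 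Each $B_j$ is clopen in $K$ and contains the corresponding $x_j$, so is non-empty (and hence, by the observation, infinite).

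The induction is then immediate: set $\nu_0:=\{K\}$, and if $\nu_i$ has been constructed as a finite partition of $K$ by clopen sets, apply the splitting claim to each of its finitely many elements $A$ to obtain $A=B_1^A\sqcup B_2^A\sqcup B_3^A$, and take $\nu_{i+1}:=\{B_j^A : A\in\nu_i,\, j=1,2,3\}$. This is again a finite partition of $K$ by clopen sets, refines $\nu_i$, and has the required three-splitting property. The only subtlety I would double-check is that the splitting can be iterated, i.e. that each $A\in\nu_i$ remains a perfect space so that the basic observation keeps applying; but a clopen subset of a perfect space is itself perfect (any $x\in A$ is a limit of points of $K\setminus\{x\}$, and these points lie in $A$ eventually since $A$ is open), which closes the induction.
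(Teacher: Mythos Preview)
Your argument is correct. The paper itself does not give a proof of this lemma --- it explicitly leaves it to the reader --- so there is nothing to compare against, and your write-up supplies exactly the kind of standard argument the authors had in mind. One minor remark: your final paragraph checking that clopen subsets of $K$ remain perfect is not strictly needed, since your earlier observation (``non-empty clopen in $K$ $\Rightarrow$ infinite'') already applies to any clopen subset of $K$, which is all the induction step requires; but the extra check is harmless.
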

The following lemma is the key for proving Lemma \ref{l.cantor} and its proof is postponed until the final section of this appendix.

\begin{lemma}[Partition lemma]\label{l.key} Consider finite partitions by clopen sets $\xi$ and $\mu$ such that all finite elements of $\xi$ are singletons. Then there exists finite partitions by clopen sets $\xi_0,\xi_1,\dots,\xi_n$ satisfying:
\begin{enumerate}[(\text{B} 1)]
\item\label{f1} the sequence is increasing $\xi_0=\xi\prec\xi_1\prec\ldots\prec\xi_n$;

\item\label{f2} If $A\in\xi_i\setminus\xi_{i+1}$ then, there exists three different and non-empty elements $B_1,B_2,B_3\in\xi_{i+1}$ such that $A=B_1\sqcup B_2\sqcup B_3$;

\item\label{f3} If $A\in\xi_i$ for some $i\leq n$ then (defining $\xi_i=\emptyset$ for $i<0$ or $i>n$)\begin{center}$A\cap K_0\neq\emptyset$ if and only if $A\in\xi_{i-1}\cup\xi_{i+1}$ for $i=0,\ldots,k_n$;\end{center} 
\item\label{f4} $\mu\prec\xi_n$
\item\label{f5} finite elements of $\xi_n$ are singletons.

\end{enumerate}
\end{lemma}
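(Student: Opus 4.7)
The plan is to induct on a complexity measure $\Phi(\xi,\mu)$ (roughly, the number of $\mu$-pieces that must still be separated), after first reducing to the normalised case $\xi\prec\mu$ by replacing $\mu$ with the common refinement $\mu\vee\xi$; this reduction is harmless for (B4) since $\mu\prec\mu\vee\xi$, so any $\xi_n$ refining $\mu\vee\xi$ refines $\mu$, and the other conditions only involve $\xi$ and the $\xi_i$'s. The structural observation I would use throughout is that condition (B3) forces the following dichotomy on pieces appearing in the sequence: a piece $A$ with $A\cap K_0=\emptyset$ that appears in some $\xi_i$ is automatically absent from both $\xi_{i-1}$ and $\xi_{i+1}$, hence appears in exactly one partition; whereas any $K_0$-meeting piece, from the moment it is created, must persist for at least two consecutive partitions. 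In particular, every $K_0$-meeting piece of $\xi=\xi_0$ must be preserved in $\xi_1$, and every $K_0$-free piece of $\xi$ must be split in $\xi_1$.

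For the base case $\xi=\mu$, I would take $n=0$ if no piece of $\xi$ meets $K_0$, and otherwise $n=1$, letting $\xi_1$ be obtained by preserving every $K_0$-meeting piece of $\xi$ and splitting every $K_0$-free piece of $\xi$ into three infinite clopen sub-pieces. Such splits exist thanks to condition $(\ast)$: since isolated points of $K_1$ belong to $K_0$, every $K_0$-free clopen piece of $\xi$ has no isolated point, is therefore infinite, and admits a partition into three infinite clopen sub-pieces, which in particular preserves (B5). Conditions (B1)--(B5) would then be verified by direct inspection.

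For the inductive step I would distinguish two cases. If some $K_0$-free $A\in\xi$ has $\#\mu|_A\geq 2$, I would construct $\xi_1$ as in the base case but splitting each such $A$ \emph{along $\mu$}: grouping $\mu|_A$ into three non-empty blocks when $\#\mu|_A\geq 3$, or splitting one of the two $\mu$-pieces in $A$ into two clopen sub-pieces when $\#\mu|_A=2$ (the $\mu$-pieces inside a $K_0$-free set are infinite by $(\ast)$, so this split is possible). A direct computation yields $\Phi(\xi_1,\mu)<\Phi(\xi,\mu)$, and the IH applied to $(\xi_1,\mu)$ produces a sequence $\xi_1,\tilde\xi_1,\ldots,\tilde\xi_{\tilde n}$ which I would prepend by $\xi_0=\xi$. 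In the remaining case, every $K_0$-free piece of $\xi$ already satisfies $\#\mu|_\cdot=1$ but some $K_0$-meeting piece $A\in\xi$ has $\#\mu|_A\geq 2$; here I would prepend \emph{two} steps: $\xi_1$ preserves $K_0$-meeting pieces and splits $K_0$-free pieces of $\xi_0$ (forced by (B3) but without altering $\Phi$), and $\xi_2$ splits the problematic $A$ along $\mu$, preserves the remaining $K_0$-meeting pieces of $\xi_1$, and splits every $K_0$-free piece of $\xi_1$ as required by (B3) at $i=1$. This step decreases $\Phi$, and the IH applied to $(\xi_2,\mu)$ completes the sequence.

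The main obstacle I anticipate is the verification of (B3) at the ``seam'' between the prepended step(s) and the sequence produced by the IH. This becomes transparent once one notes that the IH-sequence's own (B3) at its initial index says exactly that $K_0$-meeting pieces of its first partition are preserved to its second, and $K_0$-free pieces are split, which matches precisely the global (B3) requirement at the seam (the first partition of the IH-sequence being $\xi_1$ or $\xi_2$). Since $\Phi$ is a non-negative integer strictly decreasing at each inductive step, the induction terminates in finitely many steps and yields the desired finite sequence $\xi_0,\ldots,\xi_n$.
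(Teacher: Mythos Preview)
Your inductive approach differs genuinely from the paper's, which gives a direct one-shot construction: for each $A\in\xi$ the paper builds a finite increasing family of \emph{pre-partitions} of $A$ (peeling off pieces of $\mu_A$ two at a time, inserting a repetition whenever the current piece meets $K_0$), assembles these into a joint sequence of pre-partitions of $K_1$, and then extends every piece that stopped early by a ``natural continuation'' (constant if it meets $K_0$, a ternary refinement via Lemma~\ref{l.choto} if not). Your recursion is more modular and makes the role of (B3) at each step transparent; the paper's construction avoids managing an induction and a seam.

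There is, however, a genuine gap in case~2 of your inductive step. When you split the $K_0$-meeting piece $A$ ``along $\mu$'', the pieces of $\mu|_A$ need not be infinite---condition $(\ast)$ only guarantees this inside $K_0$-free sets, which is exactly what you invoke in case~1---so one of your three blocks can be a finite set with more than one point, and then $\xi_2$ fails the ``finite elements are singletons'' hypothesis needed to invoke the induction hypothesis. Concretely, if $\mu|_A$ consists of one infinite piece and one three-point piece, no grouping of $\mu|_A$ into three blocks has every block infinite-or-singleton. The fix is the same normalisation the paper performs: after passing to $\mu\vee\xi$, further replace every finite element of $\mu$ by its constituent singletons (this only refines $\mu$, so (B4) is unaffected); then every $\mu$-piece is infinite or a singleton, and your case-2 split goes through as written.
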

\subsection{Partition lemma \ref{l.key} implies Lemma \ref{l.cantor}} We proceed to construct $(\xi_i)_{i\in\N}$, an adapted sequence of partitions for $(K_0,K_1)$ satisfying the criterion for adaptability stated above. 

Define $\xi^0_0=\{K_1\}$ and $\xi^0_1=\{L_1,L_2\}$ with $L_1$ and $L_2$ infinite clopen sets (the exact same argument works if we choose $\xi^0_1=\{L_1,L_2,L_3,L_4\}$ with $L_1,L_2,L_3,L_4$ infinite and clopen). Since $L_1$ and $L_2$ are infinite we can apply Lemma \ref{l.key} to the partitions $\xi_1^0$ and $\mu_1$. This gives a sequence $\xi^0_1=\xi^1_0\prec\xi^1_1\prec\ldots\prec\xi^1_{m_1}$ that in particular satisfies
\begin{itemize}
\item finite elements of $\xi^1_{m_1}$ are singletons;
\item $\mu_1\prec\xi^1_{m_1}$
\end{itemize}

Therefore, $\xi^1_{m_1}$ satisfies the hypothesis of Lemma \ref{l.key}.  Repeating this procedure we apply \ref{l.key} infinitely many times and get a family of finite sequences $((\xi^k_i)_{i\leq m_k})_{k\in\N}$ satisfying
\begin{itemize}
\item $\xi^k_0=\xi^{k-1}_{m_{k-1}}$ for $k\geq 1$
\item $\mu_k\prec\xi^k_{m_k}$ for $k\geq 1$
\item Items (B \ref{f2}, \ref{f3}) from the conclusion of Lemma \ref{l.key}.
\end{itemize}

Concatenating $\xi^0_0,\xi^0_1$ with the families $(\xi^k_i)_{1\leq i\leq m_k}$ we obtain a sequence $(\xi_n)_{n\in\N}$ for $(K_0,K_1)$ satisfying the criterion of adaptability (with the sequence $k_n=\sum_{k=1}^n m_k$). Indeed, items (A \ref{c1}, \ref{c2}, \ref{c3}) are clearly satisfied. Items (B \ref{f2}) and (B \ref{f3}) are guaranteed in the whole construction of $(\xi_n)_{n\in\N}$: so the sequence $(\xi_n)_{n\in\N}$ satisfies (A \ref{c4}) and (A \ref{c5'})'. Finally by construction of $k_n$ and we have $\mu_n\prec\xi_{k_n}$ for all $n$, providing (A \ref{c6'})'.

\subsection{Proof of the Partition lemma \ref{l.key}} 
Before starting the proof of Lemma \ref{l.key} we need to introduce some notations and definitions. 

\paragraph{\textbf{Pre-partitions }} We say that $\eta\subseteq \mathcal{P}(X)$ is a \emph{pre-partition} of $X$ if different elements of $\eta$ have empty intersection. Denote
$$\hat{\eta}:=\bigcup_{A\in\eta}A,$$
and note that if $\hat \eta=X$ then $\eta$ is a true partition of $X$. All the pre-partitions that we will consider will consist of finitely many clopen subsets. 

Given two pre-partitions $\eta_1,\eta_2$ of the same set $X$ we say that $\eta_2$ is \emph{finer than} $\eta_1$, and we write $\eta_1\prec\eta_2$, if
\begin{enumerate}
\item for every $A\in\eta_1$ we have that 
\begin{itemize}
\item either $A\subseteq \hat{\eta}_{2}$ or
\item $A\cap \hat{\eta}_{2}=\emptyset$
\end{itemize}
 \item $\eta_2(x)\subseteq\eta_1(x)$ for every $x\in\hat{\eta}_2$
\end{enumerate}

We say that a sequence of pre-partition $\eta=(\eta_0,\ldots,\eta_n)$ is \emph{increasing} if $\eta_0\prec\ldots\prec\eta_n$. We define the \emph{partition by minimal elements} associated to such an increasing family as $$P=\{A:A\in\eta_i\text{ and }A\cap\hat{\eta}_{i+1}=\emptyset\text{ for some }i\}.$$
Here we set $\hat \eta_{n+1}=\emptyset$ so $\hat P$ is the union of $\hat\eta_n$ with finitely elements called \emph{stopping elements}. It is clear that $P$ is a partition of $\hat \eta_0$, which induces the partition $\eta_n$ on the set $\hat \eta_n$.

Finally we define the depth function of the increasing family as the function \begin{center}$o:P\to\N$ satisying $A\in\eta_{o(A)}$ and $A\cap\hat{\eta}_{o(A)+1}=\emptyset$. \end{center}
Note $o(A)<n$ if and only if $A$ is a stopping element.


\paragraph{\textbf{Subdividing elements of $\xi$ }} Now we are ready to start the proof of Lemma \ref{l.key}. Consider $\xi$ and $\mu$ as in the hypothesis of that lemma. The first step is to subdivide every elements of $\xi$ in a way similar to Lemma \ref{l.key}. Given $A\in\xi$ define $$\mu_A:=\{A\cap B:B\in\mu\}$$ Since finite elements of $\xi$ are singletons, up to sub-dividing some elements we can assume that finite elements of $\mu_A$ are singletons and $\#\mu_A$ is odd. We now define a monotone family of pre-partitions $$\eta^A=(\{A\},\eta^A_1,\ldots,\eta^A_{n_A})$$ for every $A\in\xi$. For this enumerate $\mu_A=\{B_1,\ldots,B_{2m+1}\}$ and note $B_i^{\ast}=\cup_{j\geq i}B_j$. In order to construct our family we proceed inductively and discuss several cases.\\

\noindent \emph{Case 1. $\# \mu_A=1$.} In this case define $\eta^A:=(\{A\})$.\\

\noindent \emph{Case 2. $\# \mu_A=3$.} In this case, we have two possibilities

\begin{itemize} 
\item if $A\cap K_0=\emptyset$ define $\eta^A:=(\{A\},\{B_1,B_2,B_3\})$. 
\item if $A\cap K_0\neq\emptyset$ define $\eta^A:=(\{A\},\{A\},\{B_1,B_2,B_3\})$
\end{itemize}

\noindent \emph{Case 3. $\# \mu_A>3$.} In this case we construct $\eta^A$ concatenating some monotone families of prepartition. For this, given \emph{an odd integer} $i\leq 2m-1$ define:
\begin{itemize}
\item $\nu^i_1=\{B_i^{\ast}\}$, $\nu^i_2=\{B_i,B_{i+1},B_{i+2}^{\ast}\}$ and $\nu^i=(\nu^i_1,\nu^i_2)$ if $B_i^{\ast}\cap K_0\neq\emptyset$;
\item $\nu^i_1=\{B_i,B_{i+1},B_{i+2}^{\ast}\}$ and $\nu^i=(\nu^i_1)$ if $B_i^{\ast}\cap K_0=\emptyset$.
\end{itemize}
Finally, define $\eta^A$ as the concatenation of $\{A\},\nu^1,\nu^3,\ldots,\nu^{2m-1}$. 

By construction, it is clear that $\eta^A$ is an increasing family of finite pre-partitions of $A$ by clopen sets. We denote $\eta^A=(\eta^A_0,\ldots,\eta^A_{n_A})$ where
$$n_A=m=\frac{\# \mu_A-1}{2}.$$
It is also clear that $\mu_A$, which is finer than $\mu$, is the partition by minimal elements of $A$ associated to $\eta^A$.

\paragraph{\textbf{The joint sequence of pre-partitions }} We now put together the sequences of pre-partitions of elements of $\xi$ constructed above in a coherent way. Define
$$\eta_i:=\bigcup_{i\leq n_A}\eta^A_i.$$
Since for two differents elements $A$ and $A'$ of $\xi$ we have $\hat \eta^A_i\cap\hat \eta^{A'}_i=\emptyset$, $\eta_i$ is a pre-partition. Moreover, it follows directly from the construction of prepartitions $\eta^A_i$ that they form an increasing family of pre-partitions starting at $\xi$ that we call the \emph{joint sequence of pre-partitions}. Define $N:=\textrm{max}\{n_A:A\in \xi\}$ and set $\eta=(\eta_0,\ldots,\eta_N)$. 

By the remark above the partition by minimal elements $P$ associated to $\eta$, which is a partition of $\hat\xi=K_1$, is finer than $\mu$.

Now we transform our increasing family of pre-partitions $\eta$ into a family of actual partitions satisfying the thesis of Lemma \ref{l.key} by subdividing element of $P$ thanks to the process of ``natural continuation'' that we define below.

\paragraph{\textbf{The ``natural'' continuations }} Given a clopen set $C\subseteq K_1$ and $i<N+1$ we define the \emph{natural continuation} of $C$ \emph{associated to $i$ and $N+1$} as the family of pre-partitions $\nu^C=(\nu^C_0,\ldots,\nu^C_{N+1})$ obtained as follows.\\

\noindent \emph{Case 1. $C\cap K_0\neq\emptyset$.} In this case we define 
\begin{itemize}
\item $\nu^C_j=\emptyset$ for $j\leq i$
\item $\nu^C_j=\{C\}$ for $j>i$
\end{itemize}

\noindent \emph{Case 2. $C\cap K_0=\emptyset$.} In this case, since $(K_0,K_1)$ satisfies condition $(\ast)$ and $C$ is clopen, we deduce that $C$ does not contain isolated points and therefore it is perfect. Then, we can apply Lemma \ref{l.choto} to $C$ and $N+1-i$ to obtain an increasing family of (actual) partitions of $C$ $(\{C\}=\delta_0,\ldots,\delta_{N+1-i})$. Recall that every $\delta_j$ is obtained from $\delta_{j-1}$ by dividing it in three different infinite clopen sets. Finally define :
\begin{itemize}
\item $\nu^C_j=\emptyset$ for $j\leq i$
\item $\nu^C_{i+k}=\delta_k$ for $k=1,\ldots,N+1-i$
\end{itemize}

\paragraph{\textbf{The construction of the family $(\xi_0,\ldots,\xi_{N+1})$ }} Consider $P$ the partition by minimal elements of the increasing family of pre-partitions $\eta=(\eta_0,\ldots,\eta_N)$. As we saw above $P$ is a partition of $K_1$. Then, for every $C\in P$ consider the natural continuation $\nu^C$ of $C$ that is associated to the integers $o(C)$ and $N+1$. Now define $$\xi_i:=\eta_i\cup \bigcup_{C\in P} \nu^C_i$$ First notice that by the definition of the partition $P$, the increasing family $\eta$ and the families $\nu^C$, this union is indeed a disjoint union. It follows directly from the construction that  $(\xi_0,\ldots,\xi_{N+1})$ is an increasing family of genuine partitions of $K_1$. Furthermore we have by construction
$$\mu\prec P\prec\xi_{N+1}.$$

To check that finite elements of $\xi_{N+1}$ are singletons notice that (by the construction of the $\mu_A$) the finite elements appearing in the pre-partitions $\eta^A_i$ are indeed singletons. On the other hand, the natural continuations do not create new finite subsets of $K_1$. 

Condition (B \ref{f2}) in the thesis follows from the definition of the increasing famlies $\{\eta^A:A\in\xi\}$ and the natural continuations $\{\nu^C:C\in P\}$. Finally we need to check that given $A\in\xi_i$ we have \begin{center} $A\cap K_0\neq\emptyset$ if and only if $A\in \xi_{i-1}\cup\xi_{i+1}$\end{center}
(where $\xi_i=\emptyset$ if $i<0$ or $i>N+1$). 

We need to discuss three cases. \\

\noindent \emph{Case 1. $A\in \eta_i$ and $A\notin P$. } In this case (following the notation used in the construction of $\eta^A$) we have that $A=B_i^{\ast}$ for some $i$, $B_i^{\ast}=B_i\cup B_{i+1}\cup B_{i+2}^{\ast}$ and  $$\{B_i,B_{i+1},B_{i+2}^{\ast}\}\subseteq \eta_{i+1}\subseteq \xi_{i+1}$$

\noindent \emph{Case 2. $A\in\eta_i$ and $A\in P$. } If $A\cap K_0=\emptyset$ it follows from the construction of $\eta^A$ and the definition of natural continuation. On the other hand, notice that we performed natural continuation up to $N+1$ (which is strictly greater than $o(A)$ for every $A\in P$). Therefore, if $A\cap K_0\neq\emptyset$ we have that $A\in \nu^A_{o(A)+1}\dans \xi_{o(A)+1}$. \\

\noindent \emph{Case 3. $A\in \nu^A_i$. } This case also follows from the definition of the natural continuation $\nu^A$.

This finishes the proof of Lemma \ref{l.key} and thus that of Lemma  \ref{l.cantor}.\\

\paragraph{\textbf{Acknowledgements}} It is a pleasure to thank Matilde Mart\'inez and Rafael Potrie for very fruitful discussions during the elaboration of this paper. Also, we want to thank Gilbert Hector for kindly communicating to us Blanc's thesis \cite{Blanc_these}. Last but not least, we wish to thank the anonymous referee for her/his valuable comments.


\bibliographystyle{plain}

\begin{thebibliography}{10}

\bibitem{ADMV}
F.~Alcalde~Cuesta, F.~Dal'Bo, M.~Mart{\'i}nez, and A.~Verjovsky.
\newblock Minimality of the horocycle flow on laminations by hyperbolic
  surfaces with non-trivial topology.
\newblock {\em Discrete Contin. Dyn. Syst.}, 36(9):4619--4635, 2016.

\bibitem{ABMP}
S.~Alvarez, J.~Brum, M.~Mart{\'i}nez, and R.~Potrie.
\newblock Topology of leaves for minimal laminations by hyperbolic surfaces.
\newblock {\em Preprint, [arXiv:1906.10029]}, 2019.

\bibitem{AlvarezLessa}
S.~Alvarez and P.~Lessa.
\newblock The {T}eichm\"{u}ller space of the {H}irsch foliation.
\newblock {\em Ann. Inst. Fourier (Grenoble)}, 68(1):1--51, 2018.

\bibitem{BWal}
J.~Bavard and A.~Walker.
\newblock Two simultaneous actions of big mapping class groups.
\newblock {\em Preprint, [arXiv:1806.10272]}, 2018.

\bibitem{Blanc_these}
E.~Blanc.
\newblock Propri{\'e}t{\'e}s g{\'e}n{\'e}riques des laminations.
\newblock {\em Th{\`e}se de l'{U}niversit{\'e} {C}laude {B}ernard {L}yon {I}},
  2001.

\bibitem{Blanc_2bouts}
E.~Blanc.
\newblock Laminations minimales r\'esiduellement \`a 2 bouts.
\newblock {\em Comment. Math. Helv.}, 78(4):845--864, 2003.

\bibitem{Camacho_LinsNeto}
C.~Camacho and A.~Lins~Neto.
\newblock {\em Geometric theory of foliations}.
\newblock Birkh\"{a}user Boston, Inc., Boston, MA, 1985.
\newblock Translated from the Portuguese by Sue E. Goodman.

\bibitem{Candel}
A.~Candel.
\newblock Uniformization of surface laminations.
\newblock {\em Ann. Sci. \'ENS. (4)}, 26(4):489--516, 1993.

\bibitem{candel-conlon}
A.~Candel and L.~Conlon.
\newblock {\em Foliations. {I}}, volume~23 of {\em Graduate Studies in
  Mathematics}.
\newblock American Mathematical Society, Providence, RI, 2000.

\bibitem{Cantwell_Conlon}
J.~Cantwell and L.~Conlon.
\newblock Generic leaves.
\newblock {\em Comment. Math. Helv.}, 73(2):306--336, 1998.

\bibitem{CFL}
A.~Clark, R.~Fokkink, and O.~Lukina.
\newblock The {S}chreier continuum and ends.
\newblock {\em Houston J. Math.}, 40(2):569--599, 2014.

\bibitem{Epstein_Millett_Tischler}
D.~B.~A. Epstein, K.~C. Millett, and D.~Tischler.
\newblock Leaves without holonomy.
\newblock {\em J. London Math. Soc. (2)}, 16(3):548--552, 1977.

\bibitem{Sibony_etc}
J.~E. Forn{\ae}ss, N.~Sibony, and E.~F. Wold.
\newblock Examples of minimal laminations and associated currents.
\newblock {\em Math. Z.}, 269(1-2):495--520, 2011.

\bibitem{Garnett}
L.~Garnett.
\newblock Foliations, the ergodic theorem and {B}rownian motion.
\newblock {\em J. Funct. Anal.}, 51(3):285--311, 1983.

\bibitem{Ghys_generic}
\'{E} Ghys.
\newblock Topologie des feuilles g\'{e}n\'{e}riques.
\newblock {\em Ann. of Math. (2)}, 141(2):387--422, 1995.

\bibitem{Ghys_Laminations}
\'E. Ghys.
\newblock Laminations par surfaces de {R}iemann.
\newblock In {\em Dynamique et g\'eom\'etrie complexes ({L}yon, 1997)},
  volume~8 of {\em Panor. Synth\`eses}, pages ix, xi, 49--95. Soc. Math.
  France, Paris, 1999.

\bibitem{Hector}
G.~Hector.
\newblock Feuilletages en cylindres.
\newblock In {\em Geometry and topology ({P}roc. {III} {L}atin {A}mer. {S}chool
  of {M}ath., {I}nst. {M}at. {P}ura {A}plicada {CNP}q, {R}io de {J}aneiro,
  1976)}, volume 597 of {\em Lecture Notes in Math.}, pages 252--270. Springer,
  Berlin, 1977.

\bibitem{Hirsch1975}
M.~W. Hirsch.
\newblock A stable analytic foliation with only exceptional minimal sets.
\newblock In {\em Dynamical systems---{W}arwick 1974}, volume 468 of {\em
  Lecture Notes in Math.}, pages 9--10. Springer, Berlin, 1975.

\bibitem{Lessa}
P.~Lessa.
\newblock Reeb stability and the {G}romov-{H}ausdorff limits of leaves in
  compact foliations.
\newblock {\em Asian J. Math.}, 19(3):433--463, 2015.

\bibitem{McCord}
M.~C. McCord.
\newblock Inverse limit sequences with covering maps.
\newblock {\em Trans. Amer. Math. Soc.}, 114:197--209, 1965.

\bibitem{MG}
C.~Meni{\~n}o and P.~Gusm{\~a}o.
\newblock Every noncompact surface is a leaf of a minimal foliation.
\newblock {\em Preprint, [arXiv:1910.13839]}, 2019.

\bibitem{Ric}
I.~Richards.
\newblock On the classification of noncompact surfaces.
\newblock {\em Trans. Amer. Math. Soc.}, 106:259--269, 1963.

\bibitem{Schori}
R.~Schori.
\newblock Inverse limits and homogeneity.
\newblock {\em Trans. Amer. Math. Soc.}, 124:533--539, 1966.

\bibitem{Sullivan}
D.~Sullivan.
\newblock Solenoidal manifolds.
\newblock {\em J. Singul.}, 9:203--205, 2014.

\bibitem{Verjovsky}
A.~Verjovsky.
\newblock Commentaries on the paper {\it {s}olenoidal manifolds} by {D}ennis
  {S}ullivan.
\newblock {\em J. Singul.}, 9:245--251, 2014.


\end{thebibliography}

\end{document}